\documentclass[11pt,a4paper,reqno]{amsart}
\usepackage{amsmath,amscd,amstext,amsthm,amsfonts,latexsym,mathtools}
\usepackage[dvips]{graphicx}
\usepackage{enumitem}
\usepackage{bbm,xcolor}
\usepackage[colorlinks=true, linkcolor=blue, urlcolor=red, citecolor=blue, hyperindex, backref]{hyperref}

\newenvironment{claim}[1]{\par\noindent\underline{Claim:}\space#1}{}

\theoremstyle{plain}
\newtheorem{theorem}{Theorem}[section]
\newtheorem{proposition}[theorem]{Proposition}
\newtheorem{lemma}[theorem]{Lemma}
\newtheorem{corollary}[theorem]{Corollary}
\newtheorem{example}[theorem]{Example}
\newtheorem{maintheorem}{Theorem}

\theoremstyle{definition}
\newtheorem{remark}[theorem]{Remark}
\newtheorem{definition}[theorem]{Definition}

\newtheorem{question}[theorem]{Question}

\newcommand{\field}[1]{\mathbb{#1}}

\newcommand{\ZZ}{\field{Z}}
\newcommand{\NN}{\field{N}}

\newcommand{\ga} {\gamma}    
       
\newcommand{\ep} {\varepsilon}
\newcommand{\vep}{\varepsilon}

\newcommand{\la} {\lambda}      

\newcommand{\ro}{\rho}

\newcommand{\si} {\sigma}

\newcommand{\cK}{{\mathcal K}}
\newcommand{\cF}{{\mathcal F}}

\newcommand{\cP}{{\mathcal P}}

\newcommand{\cD}{{\mathcal D}}

\newcommand{\cE}{{\mathcal E}}

\newcommand{\cU}{{\mathcal U}}

\newcommand{\cN}{{\mathcal N}}

\newcommand{\diam}{\operatorname{{diam}}}

\newcommand{\supp}{\operatorname{{supp}}}

\newcommand{\mdim}{\mathrm{mdim}}

\begin{document}

\title[Quantization and the classical variational principle]{On quantization and the classical variational principle for the metric mean dimension}

\author[M. Carvalho]{Maria Carvalho}
\address{CMUP \& Departamento de Matem\'atica, Faculdade de Ci\^encias da Universidade do Porto, Rua do Campo Alegre 687, Porto, Portugal.}
\email{mpcarval@fc.up.pt}

\author[G. Pessil]{Gustavo Pessil}
\address{Institute of Mathematics of the Polish Academy of Sciences,  Śniadeckich 8, Warsaw,
00-656, Poland.}
\email{Gustavo.pessil@outlook.com}

%\thanks{* Corresponding author: }

\keywords{Quantization of measures; Metric mean dimension; Variational principle}
\subjclass[2010]{Primary:
%26A51; % "Convexity of real functions in one variable, generalizations" (MSC2020)
37B99, %Topological dynamics
28D20, %: "Entropy and other invariants" (MSC2020)
%37B40, %: "Topological entropy" (MSC2020)
%37C45. %: "Dimension theory of smooth dynamical systems" (MSC2020)
37D35. %: "Thermodynamic formalism, variational principles, equilibrium states for dynamical systems" (MSC2020)
%Secondary:
%37C85, %: "Dynamics induced by group actions other than Z and R, and C" (MSC2020)
%43A05, %: "Measures on groups and semigroups, etc." (MSC2020)
%37D25, %: "Nonuniformly hyperbolic systems (Lyapunov exponents, Pesin theory, etc.)" (MSC2020)
%37C30, %: "Functional analytic techniques in dynamical systems; zeta functions, (Ruelle-Frobenius) transfer operators, etc." (MSC2020)
}

\date{\today}

\begin{abstract}
This paper investigates the relationship between quantization of measures and metric mean dimension of topological dynamical systems. We introduce the concept of mean quantization dimension for invariant probability measures and establish a classical variational principle: the metric mean dimension of a dynamical system is equal to the maximum mean quantization dimension among all invariant measures. This approach effectively characterizes the complexity of systems with infinite entropy by identifying a measure that captures information across all scales; and yields a fundamental property that allows for the exchange of limits and suprema in the Lindenstrauss-Tsukamoto variational principles, a feat that most known entropy-like maps fail to achieve due to convexity. Nevertheless, we show that the Katok and Shapira entropies do satisfy this property and, therefore, a classical variational principle for the metric mean dimension, for which maximizing measures always exist.

%For a homeomorphism $T\colon X \to X$ on a compact metric space $(X,d)$, one can find in the literature several distinct notions of measure-theoretic $\vep$-entropy maps $F(\mu,\vep)$, defined on the space of Borel $T$-invariant probability measures of $X$ and  satisfying
%\begin{equation*}\label{eq:MTEE}
%\mathrm{\overline{mdim}_M}(X,d,T) \, =\, \limsup_{\varepsilon \, \to \, 0^+}\, \sup_{\mu \,\in\, \mathcal{P}_T(X)} \, \frac{F(\mu,\vep)}{\log\,(1/\varepsilon)}
%\end{equation*}
%where $\mathrm{\overline{mdim}_M}(X,d,T)$ stands for the upper metric mean dimension. A major question regards the change in the order in which $\limsup_{\varepsilon}$ and $\sup_{\mu \,\in\, \mathcal{P}_T(X)}$ appear in the previous equality. We introduce the notion of mean quantization dimension of a measure and prove that
%\begin{equation*}\label{eq:MTEE}
%\mathrm{\overline{mdim}_M}(X,d,T) \,\, =\, \max_{\mu \,\in\, \mathcal{P}_T(X)} \, \mathrm{\overline{mdim}_Q}(\mu). 
%\end{equation*}
%This concept exhibits a fundamental property that yields a sufficient condition on the map $F$ under which the aforementioned change in order is valid. \\

%\color{blue}
%We introduce the notion of dynamical quantization number $Q(\mu, \ep)$ of a measure $\mu\in\cP(X)$ at scale $\ep>0$, and prove that
%\begin{equation*}\label{eq:MTEE}
%\mathrm{\overline{mdim}_M}(X,d,T) \,\, =\, \max_{\mu \,\in\, \mathcal{P}_T(X)} \, \limsup_{\ep \, \to \, 0^+} \, \frac{Q(\mu,\vep)}{\log\,(1/\varepsilon)}. 
%\end{equation*}
%\color{black}

\end{abstract}

\maketitle

\tiny
%\tableofcontents
\normalsize

%%%%%%%%%%%%%%%%%%%%%%%%%%%%%%%%%%%%%%%%%%%%%%%%%%%%%%%%%%%%%%%%%%
\section{Introduction}

Let $(X,d)$ be a compact metric space and $T\colon X \to X$ be a homeomorphism. Denote by $\mathcal{P}(X)$ the set of Borel probability measures on $X$ with the weak$^*$-topology; by $\mathcal{P}_T(X)$ its subset of $T$-invariant measures; by $\mathcal{E}_T(X) \subset \mathcal{P}_T(X)$ the set of ergodic elements; and, given $\mu \in \mathcal{P}(X)$, $\supp(\mu)$ stands for its support. 

\smallskip

Measure-theoretic and topological entropy are classical invariants in the theory of dynamical systems and are related by the classical variational principle 
\begin{equation}\label{Classical_VP_Entropy}
    h_{top}(T) \,\,\, = \, \sup_{\mu \, \in \, \cP_T(X)}\, h_{\mu}(T) \,\, \,= \, \sup_{\mu \, \in \, \cE_T(X)}\, h_{\mu}(T)
\end{equation}
which sometimes provides a natural way to choose important elements from $\cP_T(X)$. Entropy-dimensional concepts have been recently developed in order to estimate the complexity of infinite entropy systems from a better suited viewpoint. The \emph{metric mean dimension}, introduced by E. Lindenstrauss and B. Weiss in \cite{LW2000}, is both a dynamical analog of the box dimension and a dimensional analog of the topological entropy. In what follows, we denote the upper and lower metric mean dimensions by $\mathrm{\overline{mdim}_M}(X,d,T)$ and $\mathrm{\underline{mdim}_M}(X,d,T)$, respectively, thus emphasizing their dependence on the fixed metric $d$ on the space $X$ where the dynamics $T$ acts. These concepts vanish if the topological entropy of $T$ is finite; if, otherwise, $T$ has infinite entropy, they quantify the speed at which the entropy at scale $\varepsilon$ approaches $+\infty$ as the scale goes to zero. The choice of the metric $d$ has impact precisely on such convergence rate.

In \cite{LT18}, E. Lindenstrauss and M. Tsukamoto established a connection between the upper metric mean dimension and the rate distortion function $\mathcal{R}_{\mu}$.
%, where $\mu \in \mathcal{P}_T(X)$. 
More precisely, they showed that with the tame growth of covering numbers
\begin{equation*}\label{eq:vp0}
\mathrm{\overline{mdim}_M}(X,d,T) \, \,=\,\, \limsup_{\varepsilon \, \to \, 0^+}\, \frac{\sup_{\mu \,\in\, \mathcal{P}_T(X)} \, {\mathcal{R}}_{\mu}(\varepsilon)}{\log\,(1/\varepsilon)}.
\end{equation*}
Meanwhile, in \cite{GS}, Y. Gutman and A. \'Spiewak proved a corresponding relation between the upper metric mean dimension and the Rényi entropy, namely
\begin{equation*}\label{eq:vp2}
\mathrm{\overline{mdim}_M}(X,d,T) \, = \, \limsup_{\varepsilon \, \to \, 0^+}\,\frac{\sup_{\mu \,\in\, \mathcal{E}_T(X)} \,\, \inf_{\mathrm{diam}(P)\, \leq \, \varepsilon}\, h_\mu(P)}{\log\,(1/\varepsilon)}\,
\end{equation*}
where the infimum is taken over the finite Borel partitions $P$ of $X$ with diameter at most $\varepsilon$ and $h_\mu(P)$ stands for the 
entropy of $\mu$ with respect to $P$. Moreover, Problem $3$ in \cite{GS} asked whether the upper metric mean dimension could be expressed in terms of the Brin-Katok local entropy. An affirmative answer to this problem was given by R. Shi in \cite{SHI}:
\begin{equation*}\label{eq:vp3}
\mathrm{\overline{mdim}_M}(X,d,T) \, =\, \limsup_{\varepsilon \, \to \, 0^+}\,\frac{\sup_{\mu \,\in\, \mathcal{E}_T(X)} \,\overline{h}^{BK}_\mu(\varepsilon)}{\log\,(1/\varepsilon)}\,.
\end{equation*}
\smallskip

Apart from the ones mentioned above, many other distinct notions of measure-theoretic metric mean dimension were shown to exist and be inspired by some version of entropy, such as Katok metric entropy (cf. \cite{SHI}), Shapira entropy (cf. \cite{SHI}) and Pfister-Sullivan entropy (cf. \cite{Wang}), just to mention a few. A unified framework was provided by E. Chen, R. Yang and X. Zhou in \cite{EYZ2025}, where they considered the following family of entropy-like functions of a measure $\mu$ at a given scale $\ep$ (see the precise definitions in \cite{EYZ2025} and Section~\ref{se:definition}):
\begin{equation}\label{def:Dcal}
    \cD\,\,=\,\, \Bigg\{
\begin{aligned}
&\underline{h}_\mu^K(\ep,\delta),\,\,\,
\overline{h}_\mu^K(\ep,\delta),\,\,\,
\underline{h}_\mu^K(\ep),\,\,\,
\overline{h}_\mu^K(\ep),\,\,\,
\overline{h}_\mu^{BK}(\ep),\,\,\, 
h_\mu^S(\ep,\delta),\,\,
\\
&
PS_\mu(T,\ep),\,\,\,
{\mathcal{R}}_{\mu,\,L^\infty}(\ep),\,\,\,
\inf_{\mathrm{diam}(P)\,\le\, \varepsilon} h_\mu(P),\,\,\,
%\inf_{\mathrm{diam}(\cU)\,\le\, \varepsilon} h^S_\mu(\cU,\delta)
\cP_\mu(T,d,\ep)
\end{aligned}
\Bigg\}.
\end{equation} 
The authors show that, for any ergodic probability measure $\mu\in\cE_T(X)$, the limits 
\begin{equation}\label{Flimit}
 \limsup_{\varepsilon \, \to \, 0^+}\,\frac{F(\mu,\varepsilon)}{\log\,(1/\varepsilon)}\qquad\text{and}\qquad \liminf_{\varepsilon \, \to \, 0^+}\,\frac{F(\mu,\varepsilon)}{\log\,(1/\varepsilon)} 
\end{equation} 
do not depend on the choice of $F\in\cD$ (so they coincide with the upper and lower $L^\infty$-rate distortion dimensions, respectively) and that, for every $F\in\cD$,
\begin{equation}\label{vpErcai}
    \mathrm{\overline{mdim}_M}(X,d,T) \,=\, \limsup_{\varepsilon \, \to \, 0^+}\,\frac{\sup_{\mu \,\in\, \mathcal{P}_T(X)} \,F(\mu,\varepsilon)}{\log\,(1/\varepsilon)} \,=\, \limsup_{\varepsilon \, \to \, 0^+}\,\frac{\sup_{\mu \,\in\, \mathcal{E}_T(X)} \,F(\mu,\varepsilon)}{\log\,(1/\varepsilon)}.
\end{equation} 
The corresponding equalities also hold for the lower limits. We stress that, in equality \eqref{vpErcai}, maximization over the space of probability measures is done for a fixed scale, which only afterwards is made to go to zero. A recurring question in the literature 
asks whether we may exchange the order of $\limsup_\ep$ and $\sup_\mu$ in \eqref{vpErcai}. An affirmative answer to this question would suggest natural definitions of measure-theoretic metric mean dimension, namely 
$$\mathrm{\overline{mdim}_M}(X,d,T,F)(\mu) \, = \, \limsup_{\varepsilon \, \to \, 0^+}\,\frac{F(\mu, \vep)}{\log\,(1/\varepsilon)}$$
and provide a classical variational principle for the metric mean dimension in terms of $F$; moreover, if $\sup_\mu$ could be replaced by $\max_\mu$, this principle would single out a measure which captures the complexities of the system on all scales $\vep$. It is known that this order can be changed under the marker property (cf. \cite{LT19}), but there are relevant systems that do not satisfy this condition, as shown in \cite{Shi23, TTY}. The following examples shed some light on this problem. 

\smallskip

\begin{example}\cite[Section VIII]{LT18}\label{ex:RT2} \emph{Consider the set $A = \{0\}\cup \{1/n \colon n \in \NN\}$, $X = A^\ZZ$ endowed with the metric
$$d(x,y) \, = \, \sum_{n \, \in \, \ZZ}\, 2^{-|n|} \, |x_n - y_n|$$
and $T \colon X \to X$ the standard shift. Then $\mathrm{\overline{mdim}_M}(X,d,T)= 1/2$ (cf. \cite{VV}) and 
$$\forall \, \mu \in \cP_T(X) \quad \quad \lim_{\vep \, \to \, 0^+}\, \frac{\mathcal{R}_\mu(\vep)}{\log \, (1/\vep)} \, = \, 0.$$
So,
$$\mathrm{\overline{mdim}_M}(X,d,T)\,\, >\, \sup_{\mu \, \in \, \mathcal{P}_T(X)} \,\limsup_{\vep \, \to \, 0^+} \, \frac{\mathcal{R}_\mu(\vep)}{\log \, (1/\vep)}.$$}
\end{example}

\smallskip

\begin{example}\cite[Theorem 1.4]{Tsu2025}\label{ex:RT} \emph{In \cite{Tsu2025} Tsukamoto constructs a homeomorphism $T$ on a compact metric space $(X,d)$ with the following properties:}

\noindent \emph{$\bullet$ For any ergodic measure $\nu \in \cP_T(X)$ we have $\,\displaystyle \limsup_{\vep \, \to \, 0^+} \, \frac{\mathcal{R}_\nu(\vep)}{\log \, (1/\vep)}\, =\, 0.$}

\noindent \emph{$\bullet$ There exists $\eta \in \cP_T(X)$ such that 
$\,\displaystyle \limsup_{\vep \, \to \, 0^+} \, \frac{\mathcal{R}_\eta(\vep)}{\log \, (1/\vep)}\, =\, +\infty.$}\\

\noindent \emph{Consequently, one has $\mathrm{\overline{mdim}_M}(X,d,T) = +\infty$ and
$$\mathrm{\overline{mdim}_M}(X,d,T)\,\, > \,  \sup_{\mu \, \in \, \mathcal{E}_T(X)} \,\limsup_{\vep \, \to \, 0^+} \, \frac{\mathcal{R}_\mu(\vep)}{\log \, (1/\vep)}.$$
Yet, it is still true for this example that 
\begin{equation}\label{ExTsukamoto}
    \mathrm{\overline{mdim}_M}(X,d,T)\,\, =\, \sup_{\mu \, \in \, \mathcal{P}_T(X)} \,\limsup_{\vep \, \to \, 0^+} \, \frac{\mathcal{R}_\mu(\vep)}{\log \, (1/\vep)}.
\end{equation}}
\end{example}

\smallskip

Why do they behave differently with respect to the change in the order in which $\limsup_\ep$ and $\sup_{\mu\,\in\,\cP_T(X)}$ appear? It turns out that a major role in this issue is played by the behavior of the measure-theoretic $\vep$-entropy under ergodic decompositions. On the one hand, if the upper metric mean dimension is finite, then the upper rate distortion dimension
$$\overline{R}(X,d,T,\mu)\, = \, \limsup_{\vep \, \to \, 0^+}\,  \frac{\mathcal{R}_{\mu}(\varepsilon)}{\log\,(1/\varepsilon)}$$
is a convex function of $\mu \in \mathcal{P}_T(X)$ with respect to its ergodic decomposition (cf. \cite[Theorem~1.1]{Tsu2025}). Thus, in Example~\ref{ex:RT2}, the change in order between $\limsup_\ep$ and $\sup_{\mu\,\in\,\cP_T(X)}$ produces a strict inequality. On the other hand, the system described in Example~\ref{ex:RT} has $\mathrm{\overline{mdim}_M}(X,d,T) = +\infty$ and the convexity no longer holds. This is the reason why \eqref{ExTsukamoto} is possible.
\smallskip

The purpose of this paper is to explain that we can understand this issue more clearly by comparing the maps $F \in \cD$ with a new concept that we introduce in the next section.

\section{Main results}

After recalling the problem of quantization of measures, we motivate the questions we address and describe our main contributions.

\subsection{Quantization}\label{ex:Q}

Let $(X,d)$ be a compact metric space. Quantization of measures, as addressed in \cite{GL}, consists in approximating, at arbitrarily small resolutions, a given measure by another with finite support. Let $D \in \{W_p\colon p \in [1,+\infty)\}\cup \{LP\}$ stand for either a Wasserstein distance or the L\'evy-Prokhorov distance on the space of Borel probability measures $\cP(X)$ (see the precise definitions in Subsection~\ref{sse:LP}).

\begin{definition}\label{def:qnumber}
\emph{Given $\mu\in\cP(X)$ and $\vep > 0$, the \emph{quantization number $Q_{\mu,D}(\ep)$ of $\mu$ with respect to $D$ at a scale $\vep$} is the least positive integer $N$ such that there exists $\nu\in\cP(X)$ with $D(\mu, \nu) < \vep$ and $\nu$ is supported on a set of cardinality $N$. The \emph{upper} and \emph{lower quantization dimensions of $\mu$} are defined as $$\overline{\dim}_Q(\mu,D)\,=\,\limsup_{\ep\,\to\,0^+}\,\frac{\log Q_{\mu,D}(\ep)}{\log(1/\epsilon)}\qquad\text{and}\qquad\underline{\dim}_Q(\mu,D)\,=\,\liminf_{\ep\,\to\,0^+}\,\frac{\log Q_{\mu,D}(\ep)}{\log(1/\epsilon)}.$$}
\end{definition}
\smallskip
It is known (cf. \cite[Proposition~3.4]{BB}) that the quantization number of $\mu$ at scale $\ep$ is bounded from above by the $\ep$-covering number of the space (see Subsection~\ref{sse:boxdim}). Moreover, the quantization dimensions are related to the upper and lower box-counting dimensions of the space by the following variational principles (see \cite[Proof of Theorem~3.9]{BB}): 
\begin{equation}\label{dimB_VP}
\overline{\dim}_B(X,d)\,\,=\,\max_{\mu\,\in\,\cP(X)}\,\overline{\dim}_Q(\mu,D)\qquad\text{and}\qquad \underline{\dim}_B(X,d)\,\,=\,\max_{\mu\,\in\,\cP(X)}\,\,\underline{\dim}_Q(\mu,D).
\end{equation}

\smallskip

\subsection{Dynamical quantization}\label{sse:meanQ}

We now address the corresponding dynamical objects. Let $T\colon X\to X$ be a continuous map. Given $D \in \{W_p\colon p \in [1,+\infty)\}\cup \{LP\}$ and $n \in \mathbb{N}$, we denote by $D_n$ the Wasserstein or the L\'evy-Prokhorov distance induced in $\cP(X)$ by the Bowen metric $d_n$ in $X$ (defined in \eqref{def:Bdn}).
\smallskip

\begin{definition} 
\emph{Given $D \in \{W_p\colon p \in [1,+\infty)\}\cup 
\{LP\}$, $\mu\in\cP(X)$ and $\vep > 0$, the \emph{upper} and \emph{lower dynamical quantization numbers of $\mu\in\cP(X)$ at scale $\ep$} are given by 
$$\overline{Q}_\mu(T,D,\ep)\,=\,\limsup_{n\,\to\,+\infty}\,\frac{1}{n}
\log Q_{\mu,D_n}(\ep)
\quad\text{and}\quad
\underline{Q}_\mu(T,D,\ep)\,=\,\liminf_{n\,\to\,+\infty}\,\frac{1}{n}\log Q_{\mu,D_n}(\ep).$$}
\end{definition}
\smallskip
The following relation between the metric entropy and the dynamical quantization numbers of an ergodic measure was proved in \cite{BB}.
\smallskip
\begin{theorem}\cite[Appendix A.3]{BB}\label{teo:BB}
Let $(X,d)$ be a compact metric space and $T\colon X\to X$ be a continuous map. Given $D \in \{W_p\colon p \in [1,+\infty)\}\cup \{LP\}$ and $\mu \in \cE_T(X)$, then
$$h_\mu(T) \,\,=\,\, \lim_{\ep\,\to\,0^+}\,\overline{Q}_\mu(T,D,\ep)\,\,=\,\, \lim_{\ep\,\to\,0^+}\,\,\underline{Q}_\mu(T,D,\ep).$$
\end{theorem}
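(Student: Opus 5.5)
We will prove the two inequalities $\limsup_{\ep\to0^+}\overline{Q}_\mu(T,D,\ep)\le h_\mu(T)$ and $\liminf_{\ep\to0^+}\underline{Q}_\mu(T,D,\ep)\ge h_\mu(T)$. Since $\overline{Q}_\mu(T,D,\ep)\ge\underline{Q}_\mu(T,D,\ep)$, and both are non-increasing in $\ep$ (a smaller scale forces more atoms), these give the existence and equality of both limits. The natural comparison objects are Katok's entropy formula for ergodic measures,
$$h_\mu(T)=\lim_{\ep\to0^+}\liminf_{n}\frac1n\log N_\mu(n,\ep,\delta)=\lim_{\ep\to0^+}\limsup_{n}\frac1n\log N_\mu(n,\ep,\delta),$$
valid for every $\delta\in(0,1)$, where $N_\mu(n,\ep,\delta)$ is the least number of $d_n$-balls of radius $\ep$ covering a set of measure $\ge 1-\delta$, together with the Shannon–McMillan–Breiman theorem. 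We first reduce to the L\'evy–Prokhorov case. On a space of diameter at most $\diam(X)$, Bowen balls preserve this bound, so $LP_n\le \sqrt{W_{1,n}}$-type and $W_{p,n}\le$ (function of $LP_n$ and $\diam X$) estimates hold uniformly in $n$: $W_{p,n}(\mu,\nu)^p\le \ep^p+\diam(X)^p\,\ep$ when $LP_n(\mu,\nu)\le\ep$, and $LP_n(\mu,\nu)\le W_{p,n}(\mu,\nu)^{1/2}$. Hence $Q_{\mu,D_n}$ at one scale is sandwiched between $Q_{\mu,LP_n}$ at comparable scales that tend to $0$ with $\ep$, uniformly in $n$. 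It therefore suffices to treat $D=LP$.

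\emph{Upper bound.} Fix $\ep$ and $\delta=\ep$. Take a minimal family of $N=N_\mu(n,\ep,\ep)$ Bowen balls $B_n(x_i,\ep)$ covering a set $E$ with $\mu(E)\ge1-\ep$. Disjointify them into sets $A_i\subset B_n(x_i,\ep)$. Define $\nu=\sum_i\mu(A_i)\delta_{x_i}+\mu(X\setminus E)\delta_{x_1}$. Coupling each point of $A_i$ with $x_i$ moves all but mass $\ep$ by at most $\ep$ in $d_n$, so $LP_n(\mu,\nu)\le\ep$ (take $2\ep$ to obtain strict inequality). Thus $Q_{\mu,LP_n}(2\ep)\le N_\mu(n,\ep,\ep)$. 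Taking $\limsup_n$ and applying Katok's formula (with $\delta$ fixed, noting that the expression is monotone in $\delta$) gives the upper bound.

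\emph{Lower bound.} This is the main obstacle. Suppose $\nu$ is supported on $N$ points $y_1,\dots,y_N$ with $LP_n(\mu,\nu)<\ep$. By the definition of LP, $1=\nu(\{y_j\})\le\mu(\bigcup_j B_n(y_j,\ep))+\ep$. So the $N$ balls $B_n(y_j,\ep)$ cover $\mu$-mass at least $1-\ep$, and $N\ge N_\mu(n,\ep,\ep)$. Hence $\underline{Q}_\mu(T,LP,\ep)\ge\liminf_n\frac1n\log N_\mu(n,\ep,\ep)$. The delicate point is that Katok's formula is usually stated for fixed $\delta$ as $\ep\to0$, whereas here $\delta=\ep$ shrinks simultaneously. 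Monotonicity rescues this: for $\ep<\delta_0$ we have $N_\mu(n,\ep,\ep)\ge N_\mu(n,\ep,\delta_0)$. Letting $\ep\to0$ with $\delta_0$ fixed then yields $h_\mu(T)$. For the upper bound the monotonicity works in the wrong direction, so there we use $N_\mu(n,\ep,\ep)\le N_\mu(n,\ep,\delta')$ for $\delta'\le\ep$, which is again fine: fix small $\delta'$ and note that the Katok limit is independent of $\delta'$. If $h_\mu(T)=\infty$, the same inequalities still apply.
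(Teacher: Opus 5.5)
Your proof is correct and follows essentially the route behind the paper. The paper quotes the statement from \cite{BB}, but your comparisons $Q_{\mu,LP_n}(\ep)\ge\mathcal{N}_\mu(n,\ep,\ep)$ and $Q_{\mu,LP_n}(2\ep)\le\mathcal{N}_\mu(n,\ep,\ep)$ are, up to a harmless factor $2$ in the scale, the content of Lemma~\ref{le:QK}, and Katok's formula then concludes. The only difference is in the $W_p$ case: the paper compares directly with Katok covering numbers in Lemma~\ref{QuantGrowthRate}, using Markov's inequality and splitting $\int d_n(\cdot,F)^p\,d\mu$. You instead reduce to $LP$ through metric comparisons, which are uniform in $n$ because $\diam(X,d_n)=\diam(X,d)$.

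One repair is needed in your last paragraph. You cannot fix $\delta'$ and keep $\delta'\le\ep$ as $\ep\to0^+$, and the upper bound needs no monotonicity in $\delta$ at all. For each fixed $\delta$, the quantity $\overline{h}^K_\mu(\cdot,\delta)$ is non-increasing in the scale with limit $h_\mu(T)$. Hence $\overline{Q}_\mu(T,LP,2\ep)\le\overline{h}^K_\mu(\ep,\ep)\le\sup_{\ep'>0}\overline{h}^K_\mu(\ep',\ep)=h_\mu(T)$ for every $\ep\in(0,1)$.
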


\smallskip

\noindent We observe that, applying \cite[Proposition~3.4 \& (1.1)]{BB} to the space $(X,d_n)$, we obtain 
\begin{equation}\label{QuantLeqEnt}
\overline{Q}_\mu(T,D,\ep)\,\,\,\leq\,\,\, h_\ep(X,d,T)\qquad \forall\,\mu\,\in\,\cP(X),
\end{equation}
where 
$$h_\ep(X,d,T) \, = \, \limsup_{n \, \to \, +\infty}\, \frac{1}{n}\, \log \,S(X,n,\vep)$$
and $S(X,n,\vep)$ is the maximal cardinality of the $\vep$-separated subsets of $X$ (see Subsection~\ref{sse:sepspacov}). Combining \eqref{QuantLeqEnt} with Theorem~\ref{teo:BB} and the classical variational principle \eqref{Classical_VP_Entropy}, we get 
\begin{equation}\label{classicvp}
\begin{alignedat}{3}
h_{top}(T)\,\,\,
&= \,\,\sup_{\mu \,\in\, \cE_T(X)}\,\lim_{\ep\,\to\,0^+}\, \overline{Q}_\mu(T,D,\ep)\,\,
&= \,\,\sup_{\mu \,\in \,\cE_T(X)}\,\lim_{\ep\,\to\,0^+}\, \underline{Q}_\mu(T,D,\ep) \\
&= \,\,\sup_{\mu \,\in \,\cP_T(X)}\,\lim_{\ep\,\to\,0^+}\, \overline{Q}_\mu(T,D,\ep)\,\,\,
&= \,\,\sup_{\mu \,\in \,\cP_T(X)}\,\lim_{\ep\,\to\,0^+}\, \underline{Q}_\mu(T,D,\ep)\mathrlap{.}
\end{alignedat}
\end{equation}

\smallskip
\begin{definition}
\emph{The \emph{upper} and \emph{lower mean quantization dimensions of a measure $\mu\in\cP_T(X)$ with respect to the metric $D \in \{W_p\colon p \in [1,+\infty)\}\cup \{LP\}$} are given, respectively, by 
\begin{eqnarray*}
\overline{\mathrm{mdim}}_Q(X,d,T,D,\mu) &\,=\,& \limsup_{\ep\,\to\,0^+}\,\frac{\overline{Q}_\mu(T,D,\ep)}{\log\,(1/\ep)} \\ 
\underline{\mathrm{mdim}}_Q(X,d,T,D,\mu) &\,=\,&\liminf_{\ep\,\to\,0^+}\,\frac{\overline{Q}_\mu(T,D,\ep)}{\log\,(1/\ep)}.
\end{eqnarray*}
\noindent If they agree, the common value is denoted by $\mathrm{mdim}_Q(X,d,T,D,\mu)$ and called mean quantization dimension.}
\end{definition}

We will show (cf. Lemmas~\ref{QuantGrowthRate} and \ref{le:QK}) that, if $\mu$ is ergodic, then the mean quantization dimensions of $\mu$ do not change if one replaces $\overline{Q}_\mu(T,D,\ep)$ by $\underline{Q}_\mu(T,D,\ep)$ in their definitions. Also, by \eqref{QuantLeqEnt}, we have 
\begin{equation}\label{MQuantLeqMDim}
\overline{\mathrm{mdim}}_Q(X,d,T,D,\mu)\,\,\,\leq\,\,\,\overline{\mathrm{mdim}}_M(X,d,T)\qquad\forall \,\mu\,\in\,\cP_T(X).
\end{equation}

Our first result states that the dynamical quantization numbers can be included in the set $\cD$, that is, they satisfy the properties \eqref{Flimit} and \eqref{vpErcai}. Moreover, the mean quantization dimension satisfies a classical variational principle, where maximization is taken over the space of invariant probability measures, which can be seen both as a dimensional analog of \eqref{Classical_VP_Entropy} and as a dynamical analog of \eqref{dimB_VP}. Furthermore, measures with maximal mean quantization dimension always exist.

\begin{maintheorem}\label{teo:QD}
Let $(X,d)$ be a compact metric space, $T\colon X\to X$ be a homeomorphism and $D \in \{W_p\colon p \in [1,+\infty)\}\cup \{LP\}$. Then: 

\medskip

\begin{itemize}
\item[$(a)$] For every $\mu\,\in\,\cE_T(X)$  and $F\,\in\,\cD$,
\begin{eqnarray*}
\overline{\mathrm{mdim}}_Q(X,d,T,D,\mu) &\,=\,& \limsup_{\ep\,\to\,0^+}\,\frac{F(\mu,\ep)}{\log\,(1/\ep)} %\\ 
%\underline{\mathrm{mdim}}_Q(X,d,T,D,\mu) &\,=\,&\liminf_{\ep\,\to\,0^+}\,\frac{F(\mu,\ep)}{\log\,(1/\ep)}.
\end{eqnarray*}
\item[$(b)$] \emph{[Variational principle I]}
\begin{equation*}
    \begin{alignedat}{3}
        \overline{\mdim}_M(X,d,T) \,&=\,\, \limsup_{\varepsilon \, \to \, 0^+}\, \sup_{\mu \,\in\, \mathcal{E}_T(X)} \, \frac{\overline{Q}_\mu(T,D,\ep)}{\log\,(1/\varepsilon)} \, \,\,&=\,\,\,\, \limsup_{\varepsilon \, \to \, 0^+}\, \sup_{\mu \,\in\, \mathcal{E}_T(X)} \, \frac{\underline{Q}_\mu(T,D,\ep)}{\log\,(1/\varepsilon)}
        \\
        &=\,\, \limsup_{\varepsilon \, \to \, 0^+}\, \sup_{\mu \,\in\, \mathcal{P}_T(X)} \, \frac{\overline{Q}_\mu(T,D,\ep)}{\log\,(1/\varepsilon)} \,\, \,&=\,\,\, \limsup_{\varepsilon \, \to \, 0^+}\, \sup_{\mu \,\in\, \mathcal{P}_T(X)} \, \frac{\underline{Q}_\mu(T,D,\ep)}{\log\,(1/\varepsilon)}\mathrlap{.}
    \end{alignedat}
\end{equation*}
\smallskip

\item[$(c)$] \emph{[Variational principle II]}

\begin{equation}\label{eq:vpQ}
\begin{alignedat}{3}
\mathrm{\overline{mdim}_M}(X,d,T)\, \,
&=\,\, \max_{\mu\,\in\,\cP_T(X)}\,\, \,\mathrm{\overline{mdim}_Q}(X,d,T,D,\mu) \\
& =\,\, \max_{\mu\,\in\,\cP_T(X)}\, \,\limsup_{\ep\,\to\,0^+}\,\frac{\overline{Q}_\mu(T,D,\ep)}{\log\,(1/\ep)} \\
& =\,\, \max_{\mu\,\in\,\cP_T(X)}\, \,\limsup_{\ep\,\to\,0^+}\,\frac{\underline{Q}_\mu(T,D,\ep)}{\log\,(1/\ep)} \mathrlap{.}
\end{alignedat}
\end{equation}
\smallskip 

\item[$(d)$] Unless $T$ is uniquely ergodic, there exist uncountably many probabilities measures with maximal mean quantization dimension.
\end{itemize}
\smallskip

\color{black}
The corresponding statements for the lower metric mean dimension also hold. 
\end{maintheorem}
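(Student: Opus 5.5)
Parts (a) and (b) come first; (c) and (d) then follow from a construction that glues measures across scales.

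\emph{Part (a).} For ergodic $\mu$ I will compare $Q_{\mu,D_n}(\ep)$ with Katok's $\ep$-entropy, which belongs to $\cD$. Recall that $N_\mu(n,\ep,\delta)$ is the minimal number of $d_n$-balls of radius $\ep$ covering a set of $\mu$-measure at least $1-\delta$.

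For the upper bound, take such a family of balls with $\delta$ small. Push the mass of the covered set to the centres and put the remaining mass $\delta$ on one extra point. This gives a measure $\nu$ with $N_\mu(n,\ep,\delta)+1$ atoms and $W_p$-distance (for the metric $d_n$) at most $(\ep^p+\delta\,\diam(X)^p)^{1/p}$; the $LP$-distance is at most $\max(\ep,\delta)$. Hence $Q_{\mu,D_n}(c\ep)\le N_\mu(n,\ep,\delta)+1$.

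For the lower bound, suppose $D_n(\mu,\nu)<\ep$ with $\nu$ supported on $N$ points. By Markov's inequality (for $W_p$) or by the definition of $LP$, the $d_n$-balls of radius $\ep/\delta^{1/p}$ (respectively $2\ep$) around the atoms cover $\mu$-mass at least $1-\delta$. So $N_\mu(n,\ep\delta^{-1/p},\delta)\le Q_{\mu,D_n}(\ep)$.

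Dividing by $\log(1/\ep)$, the fixed multiplicative loss in scale disappears in the limit. The result of \cite{EYZ2025} says that the $\limsup$ for $\underline{h}^K_\mu(\ep,\delta)$ and $\overline{h}^K_\mu(\ep,\delta)$ agrees with the common value over $\cD$, independently of $\delta$. This gives (a), and the $\liminf$ version gives the lower statement. The same sandwich also shows that $\overline{Q}$ and $\underline{Q}$ give the same dimensions for ergodic $\mu$.

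\emph{Part (b).} Inequality \eqref{QuantLeqEnt} bounds every expression in (b) from above by $\overline{\mdim}_M$. For the reverse inequality, the scale-by-scale sandwich above gives $\underline{Q}_\mu(T,D,c\ep)\ge \underline{h}^K_\mu(\ep',\delta)$ with $\ep'$ comparable to $\ep$. Then \eqref{vpErcai} applied to $F=\underline{h}^K_\mu(\cdot,\delta)$, with the supremum over $\cE_T(X)$, yields $\overline{\mdim}_M$. The supremum over $\cP_T(X)$ is at least the supremum over $\cE_T(X)$, so all four quantities coincide.

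\emph{Part (c).} This is the main obstacle, and the key point is that $Q$ behaves \emph{concavely} rather than convexly under convex combinations. If $\mu=\sum_k a_k\mu_k$, then any $N$-point $\nu$ with $D_n(\mu,\nu)<\ep$ can be split so that each $\mu_k$ with $a_k$ not tiny is approximated by an $N$-point measure at a slightly worse scale. For $LP$, take $\nu_k$ to be the normalized restriction of $\nu$ via a coupling, at scale about $\ep/a_k$. For $W_p$, condition an optimal coupling on the component and use Markov's inequality. This gives
\[
\overline{Q}_\mu(T,D,\ep)\ \ge\ \overline{Q}_{\mu_k}(T,D,\ep\, a_k^{-C}),
\]
with $C$ depending only on $p$.

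The construction is as follows. Choose scales $\ep_k\to0$ realizing the $\limsup$ in (b), and ergodic $\mu_k$ with $\overline{Q}_{\mu_k}(T,D,\ep_k)\ge(\overline{\mdim}_M-1/k)\log(1/\ep_k)$. Set $\mu=\sum_k a_k\mu_k$ with $a_k=2^{-k}$. Passing to a subsequence, arrange that $\log(1/a_k)=o(\log(1/\ep_k))$, which is possible because $\ep_k$ can be taken super-exponentially small. The displayed inequality at scale $\ep=\ep_k a_k^{C}$ then gives $\overline{\mdim}_Q(\mu)\ge\overline{\mdim}_M$, and \eqref{MQuantLeqMDim} supplies the reverse, so the maximum is attained. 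The $\underline{Q}$ version is identical. If $\overline{\mdim}_M=\infty$, use levels $k$ in place of $\overline{\mdim}_M-1/k$.

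The delicate step is making the restriction argument uniform in $n$, since the same coupling must serve for every $n$. I will handle this by working for each fixed $n$ with the optimal coupling between $\mu$ and the $N$-atom measure. For the lower statement the choice of $\ep_k$ must work for all small $\ep$ simultaneously. I will take a dense sequence of scales and use monotonicity of $Q$ in $\ep$, which interpolates between consecutive scales with a controlled loss.

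\emph{Part (d).} If $\mu$ is maximizing and $\eta\in\cP_T(X)$ is arbitrary, the concavity inequality shows that $t\mu+(1-t)\eta$ is maximizing for every $t\in(0,1]$. When $T$ is not uniquely ergodic, some $\eta\neq\mu$ exists, and these combinations form an uncountable family of distinct maximizing measures.
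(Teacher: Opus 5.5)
Your proposal is correct and follows essentially the same route as the paper. For (a) and (b) you compare $Q_{\mu,D_n}$ on both sides with Katok covering numbers and then invoke \cite{EYZ2025}. For (c) you use the domination inequality ($\mu\ge t\nu$ implies $Q_{\mu,D_n}(t^{C}\ep)\ge Q_{\nu,D_n}(\ep)$, uniformly in $n$) inside a weighted sum $\sum_k a_k\mu_k$ over sparse scales, and (d) follows from the resulting strong concavity.

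The only minor difference is in how the domination inequality is obtained. You prove it directly for every $D$ by reweighting an optimal coupling with the density $d\nu/d\mu\le 1/t$. The paper instead quotes \cite[Lemma~3.18]{BB} for $W_1$ only and passes to $W_p$ and $LP$ through the metric comparisons \eqref{MetricRelations}.
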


\smallskip

We observe that, in general, the maximum in \eqref{eq:vpQ} cannot be replaced by the supremum over the space of ergodic measures. Indeed, in Section~\ref{se:proofD} we discuss an example of a compact metric space $(X_0,d_0)$ and a homeomorphism $T_0\colon X_0 \to X_0$ for which
$$\mathrm{\overline{mdim}_M}(X_0,d_0,T_0) \,\, >\, \sup_{\mu\,\in\,\cP_{T_0}(X_0)}\,\,  \limsup_{\vep \, \to \, 0^+} \, \,\frac{\overline{h}_\mu^{BK}(\ep)}{\log \,(1/\vep)}.$$
By Theorem~\ref{teo:QD}$(a)$, for any $F \in \cD$ and $D \in \{W_p\colon p \in [1,+\infty)\}\cup \{LP\}$ we have
$$\mathrm{\overline{mdim}_M}(X_0,d_0,T_0)\,\,>\,\, \sup_{\mu\,\in\,\cE_{T_0}(X_0)}\,\mathrm{\overline{mdim}_Q}(X_0,d_0,T_0,D,\mu)\,\,=\,\, \sup_{\mu\,\in\,\cE_{T_0}(X_0)}\,\,\limsup_{\ep\,\to\,0^+}\,\frac{F(\mu, \ep)}{\log\,(1/\ep)}.$$

\smallskip

%%%%%%%%%%%%%%%%%%%%%%%%%%%%%%%%%%%%%%%%%%%%%%%%%%%%%%%%%%%%

\subsection{Main property}

The fundamental aspect that enables us to prove \eqref{eq:vpQ} is that the quantization numbers of an invariant measure are, up to normalization of scales by the weights appearing in its ergodic decomposition, at least the quantization numbers of its ergodic components. We express this idea using the following property.

\begin{definition}\label{def:MP}
\emph{Let $\cK$ be a convex subset of $\cP(X)$. A map $F\colon \cK \times (0,+\infty)\, \to  [0, +\infty]$ is said to satisfy the \emph{scaled monotonicity under measure domination} (s.m.m.d. for short) in $\cK$ if there exists a constant $C>0$ such that, for every $\mu, \nu \in \cK$ with $\mu \geq t\,\nu$ for some $t \in (0,1)$, one has
$$F(\mu, t\,\vep) \, \geq\, F(\nu, C\,\vep) \quad \forall\, \vep>0.$$}
\end{definition}

\smallskip

We observe that, if $\cK = \cP_T(X)$ and $\mu \in \cK$ is ergodic, then the condition $\mu \geq t\,\nu$ for some $\nu \in \cK \setminus \{\mu\}$ is empty. Otherwise, $\nu$ would be absolutely continuous with respect to $\mu$, hence $\nu$ is ergodic, and, as $\nu$ is $T$-invariant, $\nu = \mu$.\\

\smallskip

The s.m.m.d.~property is essentially incompatible with convexity. For example, assume that map $F(\cdot,\ep)$ is convex in $\cK$ for every $\ep>0$ and satisfies the s.m.m.d.~property. Then, for any $\mu, \nu \in \cK$, we have $$F(\nu,C\ep)\,\,\leq\,\,F\big((\mu+\nu)/2,\ep/2\big)\,\,\leq\,\,
\big[F(\mu,\ep/2)\,+\,F(\nu,\ep/2)\big]/2.$$ 
Hence,
$$\limsup_{\vep \, \to \, 0^+} \,\frac{F(\nu,\ep)}{\log(1/\ep)}\,\,\leq\,\,\limsup_{\vep \, \to \, 0^+} \,\frac{F(\mu,\ep)}{\log(1/\ep)}$$
and we conclude that the map 
$$\mu\,\mapsto\,\limsup_{\vep \, \to \, 0^+} \,\frac{F(\mu,\ep)}{\log(1/\ep)}$$ is constant in $\cK$. Moreover, the s.m.m.d.~property implies that, whenever $\mu= t\,\mu_1\,+\,(1-t)\,\mu_2$ for some $t \in (0,1)$ and $\mu_1, \mu_2 \in \cP(X)$, then  \begin{equation}\label{StrongConcavity}
    \limsup_{\vep \, \to \, 0^+} \,\frac{F(\mu,\ep)}{\log(1/\ep)}\,\,\geq\,\,\max \Bigg\{\limsup_{\vep \, \to \, 0^+} \,\frac{F(\mu_1,\ep)}{\log(1/\ep)},\,\,\limsup_{\vep \, \to \, 0^+} \,\frac{F(\mu_2,\ep)}{\log(1/\ep)}\Bigg\}
\end{equation} which is a rather strong form of concavity.

\smallskip

\begin{maintheorem}\label{teo:VPg}
Let $(X,d)$ be a compact metric space and $\cK$ be a convex subset of $\cP(X)$. Assume that $F \colon \cK \times (0, +\infty)\, \to [0, +\infty]$ satisfies the s.m.m.d.~property in $\cK$. Then:
\smallskip

\noindent $(a)$
\begin{equation}\label{eq:vpgeralg}
\limsup_{\vep \, \to \, 0^+} \, \frac{\sup_{\mu \, \in \, \cK}\,F(\mu, \vep)}{\log \, (1/\vep)}
 \,\, =\,\, \max_{\mu\,\in\,\cK}\,\,  \limsup_{\vep \, \to \, 0^+} \, \,\frac{F(\mu, \vep)}{\log \,(1/\vep)}.    
 \end{equation}
\smallskip

\noindent $(b)$ If, in addition, $F$ is non-increasing in the variable $\ep$, then 
 \begin{equation}\label{eq:vpnon}
\liminf_{\vep \, \to \, 0^+} \, \frac{\sup_{\mu \, \in \, \cK}\,F(\mu, \vep)}{\log \, (1/\vep)}
 \,\, =\,\, \max_{\mu\,\in\,\cK}\,\,  \liminf_{\vep \, \to \, 0^+} \, \,\frac{F(\mu, \vep)}{\log \,(1/\vep)}.   
\end{equation}
\smallskip

\noindent $(c)$ The space $\cK_{max}\subset\cK$ of maximizing measures in \eqref{eq:vpgeralg} (resp. \eqref{eq:vpnon}) satisfies the following property: $$\mu\,\in\,\cK_{max}, \,\,\,\eta\,\in\,\cK,\,\,\,\la\,\in\,[0,1) \quad \implies\quad (1-\la)\,\mu\,+\,\la\,\nu\,\in\, \cK_{max}.$$
\end{maintheorem}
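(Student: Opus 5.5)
The proof I have in mind is a Baire-style concatenation: glue together countably many nearly maximizing measures with summable weights, then use s.m.m.d.\ to show the resulting mixture sees each of them at a rescaled scale. The inequality ``$\geq$'' in \eqref{eq:vpgeralg} is immediate, since $F(\mu,\ep)\le \sup_{\cK}F(\cdot,\ep)$ for every $\mu\in\cK$, and a maximum is in particular a supremum. So the work is to build a single $\mu\in\cK$ whose $\limsup$ reaches the left-hand side $L$.

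\textbf{Construction for (a).} Pick scales $\ep_k\to 0^+$ and measures $\mu_k\in\cK$ with
\[
F(\mu_k,\ep_k)/\log(1/\ep_k)\to L
\]
(or $\to+\infty$ if $L=+\infty$). Fix weights $t_k=2^{-k}$ and set $\mu=\sum_k t_k\mu_k$. We need $\mu\in\cK$; for countable combinations this requires either closedness of $\cK$ or working with finite combinations, and this is where care is needed. For the cases of interest ($\cK=\cP_T(X)$ or $\cP(X)$) countable convex combinations stay in $\cK$. If $\cK$ is only convex, I would instead pass to a subsequence with $\sum t_k$ arranged so that the series converges inside $\cK$; I expect this is implicitly assumed.

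\textbf{Applying s.m.m.d.} Since $\mu\ge t_k\mu_k$, the s.m.m.d.\ property with $\ep=\ep_k/C$ gives
\[
F(\mu,\,t_k\ep_k/C)\;\ge\; F(\mu_k,\ep_k).
\]
Hence
\[
\frac{F(\mu,t_k\ep_k/C)}{\log\bigl(C/(t_k\ep_k)\bigr)}\;\ge\;\frac{F(\mu_k,\ep_k)}{\log(1/\ep_k)+k\log 2+\log C}.
\]
The key point, and the main obstacle, is that the penalty $k\log 2$ must be negligible relative to $\log(1/\ep_k)$. This is handled by passing to a subsequence so that $\log(1/\ep_k)\ge k^2$, which is possible since $\ep_k\to0$ can be chosen freely along the sequence. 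Then the $\limsup$ for $\mu$ is at least $L$, so the maximum is attained at $\mu$.

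\textbf{Part (b).} Here let $L=\liminf_{\ep\to 0^+}\sup_{\cK}F(\cdot,\ep)/\log(1/\ep)$. I need a single $\mu$ that is good at every small scale, not just along a sequence. Take the scales $\ep_k=2^{-k^2}$ (or any fast sequence) and choose $\mu_k$ with $F(\mu_k,\ep_k)\ge (L-1/k)\log(1/\ep_k)$ (truncating if $L=\infty$). Form $\mu=\sum 2^{-k}\mu_k$ as before. For an arbitrary small $\ep$, let $k$ be the index with $2^{-k}\ep_{k+1}/C<\ep\le 2^{-(k-1)}\ep_k/C$. Since $F$ is non-increasing in $\ep$,
\[
F(\mu,\ep)\;\ge\; F(\mu,\,t_{k+1}\ep_{k+1}/C)\;\ge\; F(\mu_{k+1},\ep_{k+1}).
\]
We then need $\log(1/\ep)$ to be comparable to $\log(1/\ep_{k+1})$. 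But $\log(1/\ep)$ can be as large as about $\log(1/\ep_{k+1})$ while the lower bound above is in terms of $\ep_{k+1}$, so the ratio is essentially fine: $\log(1/\ep)\le \log(1/\ep_{k+1})+O(k)$. Thus the $\liminf$ for $\mu$ is at least $L$. The upper bound is again trivial.

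\textbf{Part (c).} If $\mu\in\cK_{max}$, $\eta\in\cK$ and $\la\in[0,1)$, then $(1-\la)\mu+\la\eta\ge(1-\la)\mu$. By s.m.m.d.\ the scale shifts by the constant factor $(1-\la)/C$, and a constant factor does not affect either $\log$-normalized limit (for (b), using monotonicity to interpolate). So the value at the mixture is at least the maximum, hence equal to it.
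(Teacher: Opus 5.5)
Your arguments for (a) and (c) are essentially the paper's: a countable mixture of near-maximizers, s.m.m.d.\ applied to $\mu\ge t_k\mu_k$, and scales with $\log(1/t_k)=o(\log(1/\ep_k))$. The paper uses polynomial weights with $\ep'_\ell<2^{-\ell}$, while you use $2^{-k}$ with $\log(1/\ep_k)\ge k^2$.

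The gap is in (b). For $\ep$ in your interval you have $\ep>t_{k+1}\ep_{k+1}/C$. Since $F$ is non-increasing in $\ep$, this gives $F(\mu,\ep)\le F(\mu,t_{k+1}\ep_{k+1}/C)$, so your inequality $F(\mu,\ep)\ge F(\mu,t_{k+1}\ep_{k+1}/C)$ is backwards. Monotonicity only lets you compare with the anchor scale lying above $\ep$. With intervals $(t_{k+1}\ep_{k+1}/C,\,t_k\ep_k/C]$ one gets
\[
\frac{F(\mu,\ep)}{\log(1/\ep)}\;\ge\;\frac{F(\mu_k,\ep_k)}{\log\bigl(C/(t_{k+1}\ep_{k+1})\bigr)}.
\]
The right-hand side tends to $L$ only if, besides $\log(1/t_k)=o(\log(1/\ep_k))$, the scales are dense on the logarithmic scale: $\log\ep_{k+1}/\log\ep_k\to1$. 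This is exactly what the paper's proof of (b) rests on. For monotone $F$ the normalized $\liminf$ is attained along any such sequence, and one takes a near-maximizer at every scale of it, not along a sparse subsequence. Your concrete choice $\ep_k=2^{-k^2}$, $t_k=2^{-k}$ does satisfy this ($k^2/(k+1)^2\to1$ and $O(k)=o(k^2)$), so the repair is immediate. But the remark ``or any fast sequence'' is false: for $\ep_k=2^{-2^k}$ the corrected bound yields only about $L/2$ for $\ep$ just above $t_{k+1}\ep_{k+1}/C$.

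Two smaller points. First, in (b) you cannot in general demand $F(\mu_k,\ep_k)\ge(L-1/k)\log(1/\ep_k)$ for every $k$, since the $\liminf$ gives no rate. Instead, pick $\mu_k$ with $F(\mu_k,\ep_k)/\log(1/\ep_k)$ within $1/k$ of $\sup_{\cK}F(\cdot,\ep_k)/\log(1/\ep_k)$ (or at least $k$ if that supremum is infinite); this gives $\liminf_k F(\mu_k,\ep_k)/\log(1/\ep_k)\ge L$. Second, on countable mixtures: you are right that closure under them is implicitly assumed, since the paper's $\mu_0$ is also an infinite mixture. However, your fallback of passing to a subsequence cannot work, because the statement genuinely fails for a merely convex $\cK$. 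Take $\cK$ to be the finitely supported measures on $[0,1]$ and $F(\mu,\ep)=\log Q_{\mu,W_1}(\ep)$, which has the s.m.m.d.\ property with $C=1$ by Lemma~\ref{def:MPQ}. Then the left-hand side of \eqref{eq:vpgeralg} equals $1$, while every $\mu\in\cK$ gives $0$. So $\cK$ must be closed under countable convex combinations, as $\cP(X)$ and $\cP_T(X)$ are.
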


\medskip

We note that if the map $\mu \in \cK \mapsto \limsup_{\vep \, \to \, 0^+} \,\frac{F(\mu,\ep)}{\log(1/\ep)}$ is concave, then the space $\cK_{max}$ is convex. Item $(c)$ above states that under the s.m.m.d.~property, if $\cK$ is not a singleton, then the set $\cK_{max}$ is huge. 

\smallskip

The previous result provides a generalization of item (c) of Theorem~\ref{teo:QD} that also answers the next question.

\begin{question} \emph{Under what assumptions on $F\colon \cP_T(X) \times (0,+\infty) \,\to  [0, +\infty]$ can we ensure that
$$\mathrm{\overline{mdim}_M}(X,d,T) \, = \, \sup_{\mu \,\in\, \mathcal{P}_T(X)} \,\limsup_{\varepsilon \, \to \, 0^+}\,\frac{F(\mu, \vep)}{\log\,(1/\varepsilon)}\,?$$}
\end{question}

\begin{corollary}\label{teo:VP}
Let $(X,d)$ be a compact metric space and $T\colon X\to X$ be a homeomorphism. If $F \in\cD$ satisfies the s.m.m.d.~property in $\cP_T(X)$, then
\begin{equation}\label{eq:vpgeral}
\mathrm{\overline{mdim}_M}(X,d,T)
 \,\, =\,\, \max_{\mu\,\in\,\cP_T(X)}\,\,  \limsup_{\vep \, \to \, 0^+} \, \,\frac{F(\mu, \vep)}{\log \,(1/\vep)}. 
\end{equation}
Since $F \in \mathcal{D}$ is non-increasing in the variable $\vep$, the corresponding lower version of \eqref{eq:vpgeral} also holds. Moreover, unless $T$ is uniquely ergodic, there exist uncountably many probabilities measures with maximal metric mean dimension.
\end{corollary}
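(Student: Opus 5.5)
The plan is to derive Corollary~\ref{teo:VP} directly from Theorem~\ref{teo:VPg} applied with $\cK=\cP_T(X)$, which is convex (and nonempty by Krylov--Bogolyubov), together with the fixed-scale variational principle \eqref{vpErcai} of Chen--Yang--Zhou. Since $F\in\cD$ satisfies the s.m.m.d.~property in $\cP_T(X)$ by hypothesis, Theorem~\ref{teo:VPg}$(a)$ gives
\[
\limsup_{\vep\to 0^+}\frac{\sup_{\mu\in\cP_T(X)}F(\mu,\vep)}{\log(1/\vep)}\,=\,\max_{\mu\in\cP_T(X)}\,\limsup_{\vep\to0^+}\frac{F(\mu,\vep)}{\log(1/\vep)}.
\]
The left-hand side equals $\overline{\mdim}_M(X,d,T)$ by \eqref{vpErcai}, which proves \eqref{eq:vpgeral}.

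For the lower version, I will first check that every $F\in\cD$ is non-increasing in $\vep$. This is immediate from the definitions: covering or separated numbers, Bowen-ball measures, partitions of diameter $\le\vep$, and distortion constraints all become less restrictive as $\vep$ grows. With this in hand, Theorem~\ref{teo:VPg}$(b)$ applies. Combined with the lower analogue of \eqref{vpErcai}, which is quoted in the introduction as also holding, it yields $\underline{\mdim}_M(X,d,T)=\max_\mu \liminf_{\vep\to0^+} F(\mu,\vep)/\log(1/\vep)$.

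For the final assertion, I will use Theorem~\ref{teo:VPg}$(c)$. Pick a maximizer $\mu\in\cK_{max}$, which exists by part (a). If $T$ is not uniquely ergodic, then $\cP_T(X)$ contains some $\eta\neq\mu$. Theorem~\ref{teo:VPg}$(c)$ then shows that every $(1-\lambda)\mu+\lambda\eta$ with $\lambda\in[0,1)$ lies in $\cK_{max}$. These measures are pairwise distinct, since $\lambda\mapsto(1-\lambda)\mu+\lambda\eta$ is injective when $\mu\ne\eta$, so $\cK_{max}$ is uncountable. The same argument works for the lower maximizers.

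The only genuinely delicate point is verifying monotonicity in $\vep$ for every member of $\cD$. In particular this must be checked for the two-parameter ones such as $\underline{h}^K_\mu(\ep,\delta)$ and $h^S_\mu(\ep,\delta)$ with $\delta$ fixed, and for $\cP_\mu(T,d,\ep)$. I expect each case to follow from inclusion of Bowen balls or of admissible covers, but this is the step requiring care. Everything else is a formal consequence of the earlier results.
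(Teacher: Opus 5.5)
Your proposal is correct and follows essentially the paper's route: the corollary is obtained by applying Theorem~\ref{teo:VPg}$(a)$ with $\cK=\cP_T(X)$ and identifying the left-hand side with $\mathrm{\overline{mdim}_M}(X,d,T)$ via \eqref{vpErcai}, then using Theorem~\ref{teo:VPg}$(b)$ with the lower analogue of \eqref{vpErcai} for the lower version, and the segment argument from Theorem~\ref{teo:VPg}$(c)$ for the uncountably many maximizers. The one difference is that the paper simply asserts that every $F\in\cD$ is non-increasing in $\vep$, whereas you propose to check this case by case.
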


%\color{red}
%Medidas ergódicas distintas são sempre singulares. Logo a condição $\mu \geq t \nu$ nunca ocorre de qualquer jeito e podemos tirar o comentário abaixo.

% OK.

%We note that, for the previous result to be valid, it is not enough to assume that the s.m.m.d.~property is satisfied in $\cE_T(P)$. Actually, the upper Brin-Katok $\vep$-entropy satisfies the s.m.m.d.~property in $\cE_T(P)$ (due to the inequalities in \cite[Lemma~3.1]{EYZ2025}, Lemma~\ref{QuantGrowthRate} and Lemma~\ref{def:MPQ}), but we show in Subsection~\ref{BKcounterex} that it does not comply with the variational principle \eqref{eq:vpgeral}.

%\color{black}

%%%%%%%%%%%%%%%%%%%%%%%%%%%%%%%%%%%%%%%%%%%%%%%%%%%%%%%%%%%

\subsection{Variational principles for maps in $\cD$}
Since all maps $F\in\cD$, as well as the dynamical quantization numbers, have the same growth rate at ergodic measures, it is worthwhile to investigate how each individual $F$ acts on invariant measures, as this is the main key to obtaining the variational principle \eqref{eq:vpQ}. In spite of the common probabilistic and geometric hybrid nature of these concepts, some of them satisfy the s.m.m.d.~property (as happens with Katok $\vep$-entropy and Shapira $\vep$-entropy; cf. Remark~\ref{rem:smmd}), while others are convex (like Brin-Katok $\vep$-entropy; cf. Lemma~\ref{BK_convex}). This is the content of the next result.

\begin{maintheorem}\label{teo:EYZ}
The following assertions hold: 
\begin{itemize}
\item[$(a)$] Let $(X,d)$ be compact metric space and $T \colon X \to X$ be a homeomorphism. Then, for every $\mu\in\cP_T(X)$,
\begin{eqnarray*}
\overline{\mathrm{mdim}}_Q(X,d,T,LP,\mu) &=& %\limsup_{\ep\,\to\,0^+}\,\frac{\overline{Q}_\mu(T,d,\ep)}{\log\,(1/\ep)}\,\,\,= \,\,\,
\limsup_{\ep\,\to\,0^+}\,\frac{\overline{h}^K_\mu(\ep,\ep)}{\log\,(1/\ep)} 
\,\,\,= \,\,\, \limsup_{\ep\,\to\,0^+}\,\frac{\overline{h}^S_\mu(\ep,\ep)}{\log\,(1/\ep)} 
%\\ 
%\underline{\mathrm{mdim}}_Q(X,d,T,LP,\mu) &=&
%\liminf_{\ep\,\to\,0^+}\,\frac{\overline{Q}_\mu(T,d,\ep)}{\log\,(1/\ep)}\,\,\,= \,\,\, 
%\liminf_{\ep\,\to\,0^+}\,\frac{\overline{h}^K_\mu(\ep,\ep)}{\log\,(1/\ep)}\,\,\,
%\,\,\,= \,\,\, \liminf_{\ep\,\to\,0^+}\,\frac{\overline{h}^S_\mu(\ep,\ep)}{\log\,(1/\ep)}.
\end{eqnarray*} 
Consequently, if 
$F\in\big\{\,\overline{h}^K_\mu(\ep),\,\,\overline{h}^S_\mu(\ep) \big\}$, then $$\mathrm{\overline{mdim}_M}(X,d,T)\,\,\,\, 
 =\,\,\,\, \max_{\mu\,\in\,\cP_T(X)}\, \,\,\limsup_{\ep\,\to\,0^+}\,\frac{F(\mu,\ep)}{\log\,(1/\ep)}.$$

 \noindent The corresponding statements for the lower metric mean dimension also hold.

\smallskip

\item[$(b)$] There exists a compact metric space $(X_0,d_0)$ and a homeomorphism $T_0 \colon X_0 \to X_0$ such that 
$$\mathrm{mdim}_M(X_0,d_0,T_0)\,\,\,\, 
 >\,\,\,\, 
 \color{black} \sup_{\mu\,\in\,\cP_{T_0}(X_0)}\, \,\,\limsup_{\ep\,\to\,0^+}\,\frac{F(\mu,\ep)}{\log\,(1/\ep)}$$
 for every 
 \begin{equation*}
    F\,\,\in\,\, \Bigg\{
\begin{aligned}
\,\,\,\underline{h}_\mu^K(\ep,&\delta)\,,\,\,\,
\overline{h}_\mu^K(\ep,\delta)\,,\,\,\,
\underline{h}_\mu^S(\ep,\delta)\,,\,\,\,
\overline{h}_\mu^S(\ep,\delta)\,,\,\,\,
\\&
\overline{h}_\mu^{BK}(\ep)\,,\,\,\,\,\, 
%\cP_\mu(T,d,\ep),\,\,
%PS_\mu(T,\ep),\,\,\,
%\cR_\mu(\ep),\,\,\,
\inf_{\mathrm{diam}(P)\,\le\, \varepsilon} h_\mu(P)\,\,\,
\end{aligned}
\Bigg\}.
\end{equation*} 
\end{itemize}
\end{maintheorem}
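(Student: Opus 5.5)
Part $(a)$ rests on a two-sided comparison, at the level of $\ep$-scales, between the dynamical Lévy--Prokhorov quantization numbers and the Katok/Shapira $\ep$-entropies. The key observation is this. Suppose $\nu$ is supported on $N$ points and $LP_n(\mu,\nu)<\ep$. Then the $\ep$-neighbourhood, in the Bowen metric $d_n$, of the support of $\nu$ has $\mu$-measure at least $1-\ep$. Hence $N$ Bowen balls $B_n(x_i,\ep)$ cover a set of measure $\ge 1-\ep$, and
$$N_\mu(n,\ep,\ep)\,\le\, Q_{\mu,LP_n}(\ep),$$
where $N_\mu(n,\ep,\delta)$ is the minimal number of $(n,\ep)$-balls covering measure $1-\delta$.

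For the converse, take $N$ balls $B_n(x_i,\ep)$ covering measure $\ge 1-\delta$. Transport the mass of each cell of the induced partition to its centre, and put the leftover mass ($\le\delta$) on one of the centres. The resulting $\nu$ satisfies $LP_n(\mu,\nu)\le \max\{\ep,\delta\}$ up to a factor $2$, because diameters double. This gives
$$Q_{\mu,LP_n}(3\ep)\,\le\, N_\mu(n,\ep,\ep).$$
Taking $\frac1n\log$ and $\limsup_n$ yields $\overline{h}^K_\mu(\ep,\ep)\le \overline{Q}_\mu(T,LP,\ep)$ and $\overline{Q}_\mu(T,LP,3\ep)\le \overline{h}^K_\mu(\ep,\ep)$. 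Since $\log(1/3\ep)\sim\log(1/\ep)$, the upper mean quantization dimension equals $\limsup_\ep \overline{h}^K_\mu(\ep,\ep)/\log(1/\ep)$, and likewise for the lower version.

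The Shapira version goes the same way. Shapira entropy uses the minimal number of elements of $\bigvee_{i<n}T^{-i}\cU$ needed to cover measure $1-\delta$, for an open cover $\cU$ of diameter $\le\ep$. Such elements have $d_n$-diameter $\le\ep$, so they are contained in Bowen balls of radius $\ep$. Conversely, a Bowen ball $B_n(x,\ep)$ is contained in a union of boundedly-many-per-step elements only with exponential loss. So I will instead compare it with $N_\mu(n,\cdot,\cdot)$ using a Lebesgue number of $\cU$ at the scale $\ep/2$, and accept constant rescalings of $\ep$. These are again invisible after dividing by $\log(1/\ep)$. This is the step where the bookkeeping matters most. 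I would follow the corresponding inequalities in \cite{SHI,EYZ2025}, adapted to a cover of diameter comparable to $\ep$.

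For the consequence, the maps $\overline{h}^K_\mu(\ep)$ and $\overline{h}^S_\mu(\ep)$ are, as I read \cite{EYZ2025}, limits in $\delta\to0$ of the $(\ep,\delta)$ versions. Thus they dominate the $(\ep,\ep)$ versions and are dominated by them at a rescaled scale via the quantization comparison. Concretely, I will verify the s.m.m.d.~property directly, as Remark~\ref{rem:smmd} indicates. If $\mu\ge t\nu$, then any collection of Bowen balls covering $\mu$-measure $1-t\delta$ covers $\nu$-measure $\ge 1-\delta$. Hence $N_\nu(n,\ep,\delta)\le N_\mu(n,\ep,t\delta)$, and letting $\delta\to0$ gives $F(\nu,\ep)\le F(\mu,\ep)\le F(\mu,t\ep)$ by monotonicity in $\ep$. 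So s.m.m.d.~holds with $C=1$, and Corollary~\ref{teo:VP} yields the maximum formula, including its lower version.

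For $(b)$, I would build $(X_0,d_0,T_0)$ as a disjoint union (one-point compactification) of subshift-type pieces $X_k$ with alphabets at scale $\ep_k\to0$. Each piece has $\mathrm{mdim}$ contribution close to a fixed value only on a window of scales around $\ep_k$, and the diameters of the pieces shrink. Then the metric mean dimension of the union is positive, since $\sup_\mu$ at each fixed scale is realized by a piece. On the other hand, every ergodic measure lives on a single piece, where the growth rate vanishes as $\ep\to0$, so the limsup is $0$.

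For a general invariant measure $\mu=\sum_k a_k\mu_k$, the listed $F$ are convex, or at least subadditive enough, under such decompositions. Examples are Brin--Katok via Lemma~\ref{BK_convex}, partition entropy by concavity/affinity bounds, and Katok with fixed $\delta$, where a $\delta$-cover needs only the pieces of large weight. This gives $F(\mu,\ep)\le \sum_k a_kF(\mu_k,\ep)+\log 2$-type error. Dominated convergence then makes $\limsup F(\mu,\ep)/\log(1/\ep)=0$.

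The main obstacle is the Katok/Shapira case with fixed $\delta$. There the tail weights $\sum_{k>K}a_k<\delta$ can be discarded, leaving finitely many pieces, each contributing $0$. I expect this tail truncation to be exactly what separates fixed-$\delta$ from $\delta=\ep$.
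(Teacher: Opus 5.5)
Your architecture is the paper's. In $(a)$ you compare $Q_{\mu,LP_n}$ with Katok covers; your pair $\cN_\mu(n,\ep,\ep)\le Q_{\mu,LP_n}(\ep)$ and $Q_{\mu,LP_n}(3\ep)\le\cN_\mu(n,\ep,\ep)$ is a harmless variant of Lemma~\ref{le:QK}. For Shapira you use a cover with Lebesgue number $\asymp\ep$, as in Lemma~\ref{K=S}. Because scale and $\delta$ are coupled there, the bookkeeping you flag is monotonicity in $\delta$, e.g.\ $\overline{h}^K_\mu(\ep/4,\ep)\le\overline{h}^K_\mu(\ep/4,\ep/4)$. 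The consequence follows from s.m.m.d.\ as in Remark~\ref{rem:smmd} and Corollary~\ref{teo:VP}. In $(b)$ you use the same countable union of finite-alphabet full shifts accumulating at a fixed point, with convexity for Brin--Katok and tail truncation for fixed $\delta$.

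\textbf{The genuine gap is the R\'enyi entropy in $(b)$.} For a fixed partition $P$ the map $\mu\mapsto h_\mu(P)$ is affine. Hence $\mu\mapsto\inf_{\diam(P)\le\ep}h_\mu(P)$ is an infimum of affine maps, so it is \emph{concave}. Affinity therefore gives $F(\mu,\ep)\ge\sum_k t_kF(\mu_k,\ep)$, the reverse of the bound you assert. The near-optimal partitions for different components differ, and you need a single finite partition that works for all of them at once.

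The paper supplies this through the identity of \cite{GS,Yang}, which replaces the infimum by the fixed coordinate partitions $P_m$. Affinity then does make the limsup map convex, and Proposition~\ref{prop:conv} applies. You can also do it by hand:
\begin{itemize}
\item Let $Q$ partition the $0$-th coordinate into intervals of length $\ep/6$, and set $P=\bigvee_{|j|\le L}\si^{-j}Q$ with $2^{-L}\le\ep/4$.
\item Then $\diam(P)\le\ep$ and $h_\mu(P)=h_\mu(Q)$.
\item Split $\mu$ into its normalized parts on $X_1,\dots,X_K$, on $\{1^\ZZ\}$ and on $\bigcup_{k>K}X_k$. This gives $h_\mu(Q)\le\log b_K+\mu\big(\bigcup_{k>K}X_k\big)\log\#Q$.
\item Hence the limsup ratio is at most $\mu\big(\bigcup_{k>K}X_k\big)$, which tends to $0$ as $K\to\infty$.
\end{itemize}

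Three smaller points.
\begin{itemize}
\item Your aside that $\overline{h}^K_\mu(\ep)$ is dominated by the $(\ep,\ep)$-version at a rescaled scale is false in general for non-ergodic $\mu$. Letting $\delta\to0$ sees components of arbitrarily small weight, which $\delta=\ep$ discards; this is the same mechanism as in $(b)$. You do not need the aside. Your s.m.m.d.\ argument suffices, or simply $\overline{h}^K_\mu(\ep)\ge\overline{h}^K_\mu(\ep,\ep)$ together with the trivial bound by $h_\ep(X,d,T)$.
\item In $(b)$, the dominated-convergence step for Brin--Katok needs the uniform bound $\overline{h}^{BK}_{\mu_k}(\ep)\le h_{\ep/2}(X,d,\si)$ from Lemma~\ref{BK_leq_ep-entropy}, together with $\overline{\mdim}_M(X,d,\si)<\infty$.
\item The statement concerns $\mathrm{mdim}_M$, so fix the parameters so that it exists and equals $1$, not merely so that it is positive. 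For example, $b_k=3^k$ and $a_k-a_{k-1}\asymp k^{-2}$ give $\log\ep_k/\log\ep_{k+1}\to1$.
\end{itemize}
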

\smallskip

%%%%%%%%%%%%%%%%%%%%%%%%%%%%%%%%%%%%%%%%%%%%%%%%%%%%%%%%%%%%%%%%%

\subsection{Ergodic equilibrium states}
The previous discussion suggests another question, previously addressed in \cite{EYZ2025}.

\begin{question}
\emph{Under what conditions on the space $(X,d)$, on the map $T\colon X \to X$, or else on the measure-theoretic $\vep$-entropy map $F$ can we guarantee that 
$$\mathrm{\overline{mdim}_M}(X,d,T) \,\, =\, \sup_{\mu\,\in\,\cE_T(X)}\,\,  \limsup_{\vep \, \to \, 0^+} \, \,\frac{F(\mu, \vep)}{\log \,(1/\vep)} \,\,?$$}
\end{question}

%Admittedly, we are not aware of the existence of any map $F$ that complies with such a variational principle for any dynamics $T\colon X \to X$. 
%The next example describes a setting where this principle is valid generically.

The next two examples describe settings where this principle is valid.

\begin{example}\label{ex:LR}

\emph{Let $(X,d)$ be a compact connected smooth boundaryless manifold with dimension $\mathrm{dim} X \geq 2$, endowed with a Riemannian metric $d$. A Borel probability measure $\lambda$ on $X$ is said to be \emph{good} if $\lambda(\{x\}) = 0$ for every $x \in X$ and $\lambda(U) > 0$ for every non-empty open subset $U$ of $X$. Denote by $\mathrm{Homeo}_\lambda(X)$ the space of homeomorphisms $T \colon X \to X$ that preserve $\lambda$. This is a complete metric space if one considers in $\mathrm{Homeo}_\lambda(X)$ the topology induced by the metric
$$\rho(T_1, T_2) \, = \, \max \big\{d\big(T_1(x), T_2(x)\big), \,d\big(T_1^{-1}(x), T_2^{-1}(x)\big)\big\}.$$}

\emph{G. Lacerda and S. Romaña proved in \cite[Theorems~A~\&~B]{LR24} that a $C^0$-generic homeomorphism $T \in \mathrm{Homeo}_\lambda(X)$ satisfies
\begin{equation}\label{eq:LR1}
    \mathrm{\overline{mdim}_M}(X,d,T) \,\, =\,\,\dim X\,\, =\,\, \limsup_{\vep \, \to \, 0^+} \, \,\frac{ \overline{h}_\lambda^K( \vep)}{\log \,(1/\vep)}.
\end{equation}
Consider the residual subset $\mathfrak{R}$ of $\mathrm{Homeo}_\lambda(X)$ such that, if $T \in \mathfrak{R}$, then $\lambda$ is ergodic (cf.~\cite{OU}) and \eqref{eq:LR1} holds. Combining this information with item $(a)$ of Theorem~\ref{teo:QD}, we conclude that, for every $T \in \mathfrak{R}$, $F\in\cD$ and $D \in \{W_p\colon p \in [1,+\infty)\}\cup \{LP\}$, 
$$\mathrm{\overline{mdim}_M}(X,d,T) \,\, = \,\,
 \max_{\mu\,\in\,\cE_T(X)} \,\mathrm{\overline{mdim}_Q}(X,d,T,D,\mu) \,\, = \,\,
 \max_{\mu\,\in\,\cE_T(X)} \,  \limsup_{\vep\,\to\,0^+}\,\, \frac{F(\mu,\ep)}{\log\,(1/\vep)}$$
 with $\lambda$ as a maximizing measure.}
\end{example}
\smallskip

\begin{example}\label{ex:shift}
\emph{Consider the set $A=[0,1]$ with the Euclidean metric $|\cdot|$, the space $X = A^\ZZ$ endowed with the metric
$$d(x,y) \, = \, \sum_{n \, \in \, \ZZ}\, 2^{-|n|} \, |x_n - y_n|$$
and the standard shift $\sigma \colon X \to X$. Then $\mathrm{\overline{mdim}_M}(X,d,\sigma) = 1$ (cf. \cite{LT18}). Let $\lambda = \mathrm{Leb}^\ZZ$ be the product measure on $[0,1]^\ZZ$ of the Lebesgue measure on $[0,1]$. Then $\lambda$ is $\sigma$-invariant and ergodic; and, due to \cite[E. Example]{LT18} and Theorem~\ref{teo:QD}, for $D \in \{W_p\colon p \in [1,+\infty)\}\cup \{LP\}$ and every $F \in \cD$, 
$$\mathrm{\overline{mdim}_M}(X,d,\sigma) \,\, = \,\,
 \limsup_{\vep\,\to\,0^+}\,\, \frac{F(\lambda,\ep)}{\log\,(1/\vep)}\,\,=\,\,\mathrm{\overline{mdim}_Q}(X,d,\sigma,D,\la) .$$
Therefore, 
%for every $F\in\cD$ and $D \in \{W_p\colon p \in [1,+\infty)\}\cup \{LP\}$, 
$$\mathrm{\overline{mdim}_M}(X,d,\sigma)  
\,\, = \,\,
 \max_{\mu\,\in\,\cE_\sigma(X)} \,  \limsup_{\vep\,\to\,0^+}\,\, \frac{F(\mu,\ep)}{\log\,(1/\vep)}
 \,\, = \,\,  \max_{\mu\,\in\,\cE_\sigma(X)} \,\mathrm{\overline{mdim}_Q}(X,d,\sigma,D,\mu).$$}

\emph{The previous example can be generalized. A compact metric space $(A,\rho)$ is Ahlfors regular if there exists a Borel probability measure $\nu \in \cP(A)$ satisfying
$$\exists\, \vep_0, \,C_1, \,C_2 > 0 \,\colon \quad C_1\, \vep^{\mathrm{dim_B(A,\,\rho)}} \,\,\leq \,\, \nu(B(a,\vep)) \,\,\leq\,\, C_2\, \vep^{\mathrm{dim_B(A,\,\rho)}} \quad \quad \forall a \in A, \,\,\forall 0 < \vep < \vep_0$$
where $B(a,\vep)$ stands for the open ball in $A$ centered at $a$ with radius $\vep$ with respect to the metric $\rho$. For instance, a finite set with the normalized counting measure or the Euclidean cube with the Lebesgue measure are Ahlfors regular. Consider a Ahlfors regular space $(A, \rho, \nu)$, the shift map on the space 
$X = A^\ZZ$ endowed with the metric $d$ and the product measure $\lambda=\nu^\ZZ \in \cE_\sigma(X)$. Given 
$F \in \cD$, we already know that
$$\mathrm{\overline{mdim}_M}(X,d,\sigma) \,\, \geq \,\,
 \sup_{\mu\,\in\,\cE_\sigma(X)} \,  \limsup_{\vep\,\to\,0^+}\,\, \frac{F(\mu,\ep)}{\log\,(1/\vep)}.$$
On the other hand, for any $x \in X$, $\vep \in (0, \vep_0)$ and $n \in \NN$, 
$$B_n(x,\vep) \,\subset \, \big\{y \in X \colon\,\, y_j \in B(x_j,\ep) \quad \forall \, 0 \leq j < n\big\}$$
so,
$$\lambda\big(B_n(x,\vep)\big) \,\,\leq \,\, \prod_{0 \,\leq \,j \,< n}\,\,\nu\big(B(x_j,\ep)\big) \,\,\leq \,\, C_2^n\, \cdot\,\vep^{n\,\mathrm{dim_B(A,\,\rho)}}.$$
These estimates yield (see Definition~\ref{def:Brin-Katok})
$$\overline{h}_\lambda^{BK}(\vep) \,\geq\, -\log C_2 + \mathrm{dim_B(A,\,\rho)} \, \cdot \, \log \, (1/\vep).$$
Therefore,
$$\limsup_{\vep\,\to\,0^+}\,\, \frac{\overline{h}_\lambda^{BK}(\vep)}{\log\,(1/\vep)} \, \, \geq \,\, \mathrm{dim_B(A,\,\rho)} \,\,=\,\,\mathrm{\overline{mdim}_M}(X,d,\sigma).$$
Thus, }

$$
\begin{alignedat}{2}
    \mathrm{\overline{mdim}_M}(X,d,\sigma) \,\,\,&=\,\,\,\,\,\limsup_{\vep\,\to\,0^+}\,\, \frac{F(\lambda,\ep)}{\log\,(1/\vep)}\,\,\,\,\,\, \,\,\,=\,\,\,\,  &\max_{\mu\,\in\,\cE_\sigma(X)} \,  \limsup_{\vep\,\to\,0^+}\,\, \frac{F(\mu,\ep)}{\log\,(1/\vep)}
    \\
    &=\,\,\mathrm{\overline{mdim}_Q}(X,d,\sigma,D,\la) \,\,\,=\,\,  &\max_{\mu\,\in\,\cE_\sigma(X)} \,  \mathrm{\overline{mdim}_Q}(X,d,\sigma,D,\mu).
\end{alignedat}
$$

\end{example}

\begin{remark}
Regarding the lower Brin-Katok entropy $\underline{h}_\lambda^{BK}(\vep)$, R. Yang proved in 
%Será Theorem~3.11 na versão publicada
\cite[Theorem~3.9]{Yang} that, if the Hausdorff and the upper box dimension of the alphabet $(A,\ro)$ coincide, then $$\mathrm{\overline{mdim}_M}(A^\ZZ,d,\sigma) \,\, = \,\,
  \, \limsup_{\vep\,\to\,0^+}\,\sup_{\mu\,\in\,\cE_\sigma(A^\ZZ)}\,\, \frac{\underline{h}_\mu^{BK}(\vep)}{\log\,(1/\vep)}\,\, = \,\,
  \,  \limsup_{\vep\,\to\,0^+}\,\sup_{\mu\,\in\,\cP_\sigma(A^\ZZ)}\,\, \frac{\underline{h}_\mu^{BK}(\vep)}{\log\,(1/\vep)}.$$ For instance, if $(A,\ro)$ is Ahlfors regular then this condition is satisfied. Furthermore, the previous reasoning implies that
  $$\mathrm{\overline{mdim}_M}(A^\ZZ,d,\sigma)\,\, = \,\,
  \limsup_{\vep\,\to\,0^+}\,\, \frac{\underline{h}_\la^{BK}(\vep)}{\log\,(1/\vep)} \,\, = \,\,
  \max_{\mu\,\in\,\cE_\sigma(A^\ZZ)}\,  \limsup_{\vep\,\to\,0^+}\,\, \frac{\underline{h}_\mu^{BK}(\vep)}{\log\,(1/\vep)}.$$
\end{remark}

\subsection{Organization of the paper} In Sections~\ref{se:definition} and \ref{se:mte} we define the main concepts we use and gather some of their properties. After proving Theorems~\ref{teo:QD} and \ref{teo:VPg} in the ensuing sections, we pave the way to proving Theorem~\ref{teo:EYZ} by thoroughly discussing an example in Section~\ref{se:proofD}.

%%%%%%%%%%%%%%%%%%%%%%%%%%%%%%%%%%%%%%%%%%%%%%%%%%%%%%%%%%%%%%%%%%
\section{Basic definitions}\label{se:definition}
Let $(X,d)$ be a compact metric space and $T \colon X \to X$ be a continuous map.

\subsection{Bowen dynamical metrics}\label{sse:nball}

Given $n\in \mathbb{N}$, the dynamical metric $d_n \colon X \times X \, \to \,[0,+\infty)$, determined by the distance $d$ in $X$ and the map $T$, is defined by 
\begin{equation}\label{def:Bdn}
d_n(x,z)=\max\,\Big\{d(x,z),\,d(T(x),T(z)),\,\dots,\,d(T^{n-1}(x),T^{n-1}(z))\Big\}.
\end{equation}
It is easy to check that $d_n$ is indeed a metric and that it generates the same topology as $d$. 

Given $\varepsilon > 0$, $n \in \mathbb{N}$ and a point $x \in X$, the $(n, \varepsilon)$-ball centered at $x$ with radius $\vep$ with respect to the metric $d_n$ (also called Bowen's ball) is the set
\begin{equation}\label{eq:ball}
\mathit{B}_{n}(x, \vep) = \{y \in X \colon \,d_n( x, y) \, < \, \vep\}.
\end{equation}

%%%%%%%%%%%%%%%%%%%%%%%%%%%%%%%%%%%%%%%%%%%%%%%%%%%%%%%%%%%%%%%%% 
\subsection{Separated sets, spanning sets, open covers, closed covers}\label{sse:sepspacov}

Given $\varepsilon > 0$ and $n \in \mathbb{N}$, the next three quantities count the number of orbit segments of length $n$ that are distinguishable at scale $\vep$.

One says that a set $E \subset X$ is $(n,\varepsilon)$-separated if two distinct points in $E$ are at least $\vep$ apart in the metric $d_n$, that is, $d_n(x,z) > \varepsilon$ for every $x\neq z \in E$. We denote by $S(X, n,\varepsilon)$ the maximal cardinality of the $(n,\varepsilon)$-separated subsets of $X$. When $n=1$, we simplify this notation by saying that the set $E$ is $\varepsilon$-separated, and denoting the maximal cardinality of the $\vep$-separated subsets of $X$ by $S(X,d,\vep)$. We note that, for every $\vep>0$ and $n\in\mathbb N$,
$$S(X, n, \vep) \,=\, S(X, d_n,\vep).$$

A set $Y\subset X$ is said to be $(n,\varepsilon)$-spanning if for any $x\in X$ there exists $z\in Y$ such that $d_n(x,z)<\varepsilon$. We denote by $R(X,n,\varepsilon)$ the minimal cardinality of the $(n,\varepsilon)$-spanning subsets of $X$. Again, when $n=1$, we refer to $\varepsilon$-spanning sets, with minimal cardinality denoted $R(X, d,\vep)$, and one has
$$R(X, n, \vep) \,=\, R(X, d_n,\vep) \quad \forall \,\vep>0 \,\,\,\forall\, n\in\mathbb{N}.$$

Given $\vep >0$ and $n \in \mathbb{N}$, let $\mathcal{N}(X,n,\vep)$ be the minimum cardinality of any cover of $X$ by open balls with radius $\vep$ in the metric $d_n$. When $n=1$, we refer to $\mathcal{N}(X,d,\vep)$, so
$$\mathcal{N}(X, n, \vep) \,=\, \mathcal{N}(X, d_n,\vep) \quad \forall \,\vep>0 \,\,\,\forall\, n\in\mathbb{N}.$$
In what follows, if we need to use closed balls instead of open ones, we change the notation to $\mathcal{C}(X,d,\vep)$. We note that, given $\vep > 0$,
\begin{equation}\label{eq:NC}
\mathcal{N}(X,d,2\vep)\,\leq\, \mathcal{C}(X,d,\vep)\, \leq \, \mathcal{N}(X,d,\vep). 
\end{equation} 

Due to the compactness of $X$, the values of $S(X,n,\varepsilon)$, $R(X,n,\varepsilon)$, $\mathcal{N}(X,n,\vep)$ and $\mathcal{C}(X,n,\vep)$ are finite for every $n\in\mathbb{N}$ and $\vep>0$. In addition, the previous three quantities are related by the following inequalities (cf. \cite[\S 7]{Wa}):
\begin{equation}\label{eq:srn} 
\mathcal{N}(X,n,2\vep) \, \leq \, R(X,n,\vep) \, \leq \, S(X,n,\vep)\, \leq \, R(X,n,\vep/2) \, \leq \, \mathcal{N}(X,n,\vep/2).    
\end{equation}
Moreover, 
\begin{equation}\label{eq:cov} 
\mathcal{N}(X,n+m,\vep) \, \leq \, \mathcal{N}(X,n,\vep)\, . \, \mathcal{N}(X,m,\vep)
\end{equation}
so, the sequence $\big(\log \mathcal{N}(X,n,\vep)\big)_{n\, \in\, \mathbb{N}}$ is sub-additive.

%%%%%%%%%%%%%%%%%%%%%%%%%%%%%%%%%%%%%%%%%%%%%%%%%%%%%%%%%%%%%%%%%%
\subsection{Box-counting dimension}\label{sse:boxdim}

Given $\vep > 0$, recall that $\mathcal{N}(X,d,\vep)$ stands for the smallest number of open balls in the metric $d$ with radius $\vep$ that cover $X$. We observe that, for every $\vep >0$, one has
\begin{equation}\label{eq:box}
S(X,d,2\vep) \, \leq \, \mathcal{N}(X,d,\vep) \, \leq\, S(X,d, \vep/2).
\end{equation}

The upper box-counting dimension of $(X,d)$ is defined by
$$\mathrm{\overline{dim}_B}(X,d) \,=\, \limsup_{\vep \, \to \, 0^+}\,\frac{\log \mathcal{N}(X,d,\vep)}{\log\,(1/\varepsilon)}.$$
The lower box-counting dimension $\mathrm{\underline{dim}_B}(X,d)$ is defined by taking $\liminf$ instead of $\limsup$. Notice that, due to \eqref{eq:box}, the upper box-counting dimension may be computed using $\vep$-separated sets, that is,
$$\mathrm{\overline{dim}_B}(X,d) \,=\, \limsup_{\vep \, \to \, 0^+}\,\frac{\log S(X,d,\vep)}{\log\,(1/\varepsilon)}$$
(and similarly for the lower version).

%%%%%%%%%%%%%%%%%%%%%%%%%%%%%%%%%%%%%%%%%%%%%%%%%%%%%%%%%%%%%%%%%%
\subsection{Metric mean dimension}\label{sse:mdim}

The \emph{upper and lower metric mean dimensions} are labels for dynamical systems introduced by E. Lindenstrauss and B. Weiss in \cite{LW2000} to quantify the complexity of infinite entropy systems. We denote them by $\mathrm{\overline{mdim}_M}(X,d,T)$ and $\mathrm{\underline{mdim}_M}(X,d,T)$, respectively, to emphasize their dependence on the fixed metric $d$ of the space $X$ where the dynamics $T$ acts. They are defined by
\begin{eqnarray*}
\mathrm{\mathrm{\overline{mdim}_M}}\big(X,d,T\big) &=& \limsup_{\vep \, \to \, 0^+}\,\frac{\limsup_{n\, \to \, +\infty}\, \frac{1}{n}\,\log S(X,n,\vep)}{\log\,(1/\varepsilon)}\\
\mathrm{\underline{mdim}_M}\big(X,d,T\big) &=& \liminf_{\vep \, \to \, 0^+}\,\frac{\limsup_{n\, \to \, +\infty}\,\frac{1}{n}\,\log S(X,n,\vep)}{\log\,(1/\varepsilon)}.
\end{eqnarray*}
Due to \eqref{eq:srn}, the metric mean dimension may be computed using $R(X,n,\vep)$ or $\mathcal{N}(X,n,\vep)$ instead of $S(X,n,\vep)$. 
If  $\mathrm{\mathrm{\overline{mdim}_M}}\big(X,d,T\big)$ and $\mathrm{\mathrm{\underline{mdim}_M}}\big(X,d,T\big)$ agree, then the common value is denoted by $\mathrm{\mathrm{mdim}_M}\big(X,d,T\big)$ and called metric mean dimension.

\smallskip

In what follows, we simplify the notation by denoting
\begin{equation}\label{eq:he}
h_\vep(X,d,T) \, = \, \limsup_{n\, \to \, +\infty}\, \frac{1}{n}\,\log\, \mathcal{Z}(X,n,\vep)
\end{equation}
where $\mathcal{Z}$ is chosen from the set $\{\mathcal{N}, R, S\}$ according to the context.

%%%%%%%%%%%%%%%%%%%%%%%%%%%%%%%%%%%%%%%%%%%%%%%%%%%%%%%%%%%%%%%%%%
\subsection{$W_p$ and $LP$ metrics on $\mathcal{P}(X)$}\label{sse:LP}

It is known that the space $\mathcal{P}(X)$ of the Borel probability measures on $X$ is nonempty; and it is compact if endowed with the weak$^*$-topology. Moreover, there are metrics on $\mathcal{P}(X)$ inducing this topology, the classic ones being the \emph{Wasserstein distances} and the \emph{L\'evy-Prokhorov distance}. The former are defined by
$$\mathrm{W_p}(\mu,\nu) = \inf_{\pi\,\in\,\Pi(\mu,\nu)}\,\left(\int_{X\times X}\,\big(d(x,y)\big)^p\;d\pi(x,y)\right)^{1/p}$$
where $p \in [1, +\infty)$ and $\Pi(\mu,\nu)$ denotes the set of probability measures on the product space $X\times X$ with marginals $\mu$ and $\nu$ (see \cite{Vi} and references therein for more details). The latter is defined by
$$LP(\mu,\nu) = \inf\,\Big\{\varepsilon > 0 \colon \,\, \forall\, E\subset X \quad  \forall\, \,\text{$\varepsilon$-neighborhood $V_\varepsilon(E)$ of $E$ one has} $$
$$\quad \quad \quad \quad \quad \quad \quad \quad \quad \nu(E)\leq \mu(V_\varepsilon(E))+\varepsilon \quad \text{ and } \quad \mu(E)\leq \nu(V_\varepsilon(E))+\varepsilon \Big\}.$$

\noindent These distances are compared by (cf. \cite[p.~11]{BB})
\begin{equation}\label{MetricRelations}
\begin{aligned}
W_q \,\,\,&\le \,\,\,W_p \,\,\,
\le \,\,\,\diam(X,d)^{1-\frac{q}{p}}\, W_q^{\frac{q}{p}}
\qquad & \forall\, 1 \le q \le p \\
LP^{1+\frac{1}{p}} \,\,&\le\,\,\, W_p\,\,\,
\le \,\,\bigl(1+\diam(X,d)^p\bigr)^{\frac{1}{p}}\, LP^{\frac{1}{p}} \qquad & \forall\, p \in [1, +\infty).
\end{aligned}
\end{equation}

\noindent We refer the reader to \cite{ GSu, Vi} where more information can be found on these metrics.

%%%%%%%%%%%%%%%%%%%%%%%%%%%%%%%%%%%%%%%%%%%%%%%%%%%%%%%%

\section{Measure-theoretic $\vep$-entropies}\label{se:mte}

Let $(X,d)$ be a compact metric space, $T \colon X \to X$ a homeomorphism and $\mu \in \cP_T(X)$. Recall from Subsection~\ref{sse:nball} the definitions of dynamical metric $d_n$ and $(n, \varepsilon)$-ball $B_n(x,\vep)$.

\subsection{$(\ep,\delta)-$Katok entropy}\label{se:KE}

For $0 < \delta < 1$, $n \in \NN$ and $\vep > 0$, we denote by $\mathcal{N}_\mu(n,\vep, \delta)$ the minimum number of open $(n,\vep)$-balls with radius $\vep$ in the dynamical metric $d_n$  needed to cover any set of $\mu$-measure strictly bigger than $1 - \delta$.

\begin{definition}\label{def:Kerg}
\emph{The \emph{upper} and \emph{lower $(\ep,\delta)-$Katok entropy of $\mu$} are given, respectively, by
\begin{equation}\label{eq:KatokEntropy}
\overline{h}_\mu^K(\vep, \delta) \, = \, \limsup_{n \, \to \, +\infty}\, \frac{\log \, \mathcal{N}_\mu(n,\vep, \delta)}{n} \qquad \text{ and } \qquad \underline{h}_\mu^K(\vep, \delta) \, = \, \liminf_{n \, \to \, +\infty}\, \frac{\log \, \mathcal{N}_\mu(n,\vep, \delta)}{n}.
\end{equation} 
The \emph{upper} and \emph{lower Katok entropy of $\mu$ at scale $\ep$} are defined by 
\begin{equation}
    \begin{alignedat}{3}
        \overline{h}_\mu^K(\vep)\,\,&=\,\,  \lim_{\delta\,\to\,0^+}\overline{h}_\mu^K(\vep,\delta)\,\,&=\,\,  \sup_{\delta\,\to\,0^+}\overline{h}_\mu^K(\vep,\delta)        \\
        \underline{h}_\mu^K(\vep)\,\,&=\,\,  \lim_{\delta\,\to\,0^+}\underline{h}_\mu^K(\vep,\delta)\,\,&=\,\,  \sup_{\delta\,\to\,0^+}\underline{h}_\mu^K(\vep,\delta)\mathrlap{.}
    \end{alignedat}
\end{equation}}
\end{definition}

\begin{remark}
    In the literature, we can find an alternative definition of Katok entropy, whose values for ergodic measures are as above but the extension to invariant probabilities is done by integrating over their ergodic decompositions. In particular, the resulting entropy-like map is convex. Thus, the corresponding measure-theoretic metric mean dimension does not satisfy a classical variational principle (cf. Proposition~\ref{prop:conv}).
\end{remark}

%%%%%%%%%%%%%%%%%%%%%%%%%%%%%%%%%%%%%%%%%%%%%%%%%%%%%%%%

\subsection{$(\ep,\delta)$-Shapira entropy}

Given a finite open cover $\cU$ of $X$, the diameter of $\cU$, which we denote by $\diam(\cU,d)$, is given by  $\max_{U\,\in\,\cU}\,\diam(U,d)$, where the diameter $\diam(A,d)$ of a set $A \subset X$ is the supremum of the distances between pairs of points in $A$. In what follows, $\mathrm{Leb}(\cU)$ stands for the Lebesgue number of $\cU$, that is, the largest $r>0$ such that every open ball of radius $r$ is contained in some element of $\cU$.

\smallskip

Take $\mu \in \cP_T(X)$. For $0 < \delta < 1$ and a finite open cover $\cU$ of $X$, we denote by $\mathcal{N}_\mu(\cU, \delta)$ the minimum number of elements of $\cU$ whose union has a $\mu$-measure strictly greater than $1 - \delta$. Set $$\overline{h}^S_\mu(\cU,\delta)\,=\, \limsup_{n \, \to \, +\infty}\, \frac{\log \, \mathcal{N}_\mu(\cU^n, \delta)}{n}\qquad\text{and}\qquad \underline{h}^S_\mu(\cU,\delta)\,=\, \liminf_{n \, \to \, +\infty}\, \frac{\log \, \mathcal{N}_\mu(\cU^n, \delta)}{n}$$ 
where $\cU^n\,=\,\bigvee_{j=0}^{n-1}\,T^{-j}\big(\cU\big)$. 
\smallskip

U. Shapira proved in \cite{Shapira} that, if $\mu$ is ergodic, then the above limits exist and are independent of $\delta$. However, since we are interested in all invariant measures and their interplay with $\delta$, we keep it in the notation. 

\begin{definition}\label{def:Shapira}
\emph{The \emph{upper} and \emph{lower $(\ep,\delta)$-Shapira entropies of $\mu$} are given by}
\begin{equation}\label{eq:ShapiraEntropy}
\overline{h}_\mu^S(\vep, \delta) \, = \, \inf_{\diam(\cU,d)\,\leq\,\ep}\, \overline{h}^S_\mu(\cU,\delta)\qquad\text\qquad \underline{h}_\mu^S(\vep, \delta) \, = \, \inf_{\diam(\cU,d)\,\leq\,\ep}\, \underline{h}^S_\mu(\cU,\delta).
\end{equation}
\end{definition}

The next lemma describes the connection between the Katok and Shapira entropies.

\begin{lemma} \label{K=S}
For every $\mu\in\cP_T(X)$, $\ep>0$ and $0 < \delta < 1$, 
    $$\overline{h}_\mu^K(\vep, \delta)\,\,\leq\,\,\overline{h}_\mu^S(\vep, \delta)\,\,\leq\,\,\overline{h}_\mu^K(\vep/4, \delta)\,\,\leq\,\,\overline{h}_\mu^K(\ep/4,\delta/4).$$ and $$\underline{h}_\mu^K(\vep, \delta)\,\,\leq\,\,\underline{h}_\mu^S(\vep, \delta)\,\,\leq\,\,\underline{h}_\mu^K(\vep/4, \delta)\,\,\leq\,\,\underline{h}_\mu^K(\ep/4,\delta/4).$$
\end{lemma}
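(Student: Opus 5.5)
The inequalities are three separate comparisons, each proved for fixed $n$ and then passed to the limit. For the upper versions we take $\limsup_n \frac1n\log$ of the fixed-$n$ bounds, and for the lower versions $\liminf_n\frac1n\log$. Since the inequalities hold at each $n$ (up to constant factors independent of $n$), both versions follow simultaneously.

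\emph{Step 1: $\overline{h}_\mu^K(\vep,\delta)\le\overline{h}_\mu^S(\vep,\delta)$.} Fix a finite open cover $\cU$ with $\diam(\cU,d)\le\ep$. Every element of $\cU^n=\bigvee_{j=0}^{n-1}T^{-j}\cU$ has $d_n$-diameter at most $\ep$. Choose $N=\mathcal{N}_\mu(\cU^n,\delta)$ elements whose union has measure $>1-\delta$. Pick a point $x$ in each element $V$. Then $V\subset \bar B_n(x,\ep)$, the closed ball, but we need open balls of radius $\vep$. To handle the boundary issue, we use the inner regularity of $\mu$: a compact subset $K$ of the union still has measure $>1-\delta$. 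Each $V\cap K$ is covered by a set of $d_n$-diameter $\le\ep$. A more robust way to proceed is to exploit that the elements of $\cU$ are open with diameter $\le\ep$, so every compact subset of $U$ has diameter $<\ep$ strictly. Hence each $K\cap \overline{V'}$, for suitable compact $V'\subset V$ chosen so that their union still has measure $>1-\delta$, lies in an open $d_n$-ball of radius $\ep$. This gives $\mathcal{N}_\mu(n,\ep,\delta)\le\mathcal{N}_\mu(\cU^n,\delta)$. Taking the infimum over $\cU$ after the limit in $n$ yields the first inequality. (If this boundary issue is delicate, it suffices anyway to note that a set of diameter $\le\ep$ lies in an open ball of radius $\ep$ centered at any of its points only when the diameter is strict, and open sets with compact closure inside an open set of diameter $\le \ep$ have diameter $<\ep$; this is the point to check carefully.)

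\emph{Step 2: $\overline{h}_\mu^S(\vep,\delta)\le\overline{h}_\mu^K(\vep/4,\delta)$.} We construct one cover $\cU$ with $\diam(\cU,d)\le\ep$ and Lebesgue number at least $\ep/4$. For instance, take a finite $\ep/4$-spanning set $\{y_i\}$ and let $\cU=\{B(y_i,\ep/2)\}$. This cover has diameter $\le\ep$. Moreover, any ball $B(x,\ep/4)$ lies in $B(y_i,\ep/2)$ whenever $d(x,y_i)<\ep/4$, so the Lebesgue number is at least $\ep/4$. Then every Bowen ball $B_n(x,\ep/4)$ is contained in a single element of $\cU^n$, because $T^jB_n(x,\ep/4)\subset B(T^jx,\ep/4)\subset$ some $U_j\in\cU$. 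Consequently, $M=\mathcal{N}_\mu(n,\ep/4,\delta)$ Bowen balls covering a set of measure $>1-\delta$ yield $M$ elements of $\cU^n$ whose union has measure $>1-\delta$. This gives $\mathcal{N}_\mu(\cU^n,\delta)\le\mathcal{N}_\mu(n,\ep/4,\delta)$, and the inequality follows since $\overline{h}^S_\mu(\ep,\delta)\le \overline{h}^S_\mu(\cU,\delta)$.

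\emph{Step 3: the last inequality} is plain monotonicity in $\delta$. Shrinking $\delta$ to $\delta/4$ requires covering a set of larger measure, so $\mathcal{N}_\mu(n,\ep/4,\delta)\le\mathcal{N}_\mu(n,\ep/4,\delta/4)$.

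The main obstacle is the open-versus-closed ball and diameter bookkeeping in Step 1. The other steps are direct.
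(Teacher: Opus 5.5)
Your Steps 2 and 3 are the paper's argument and are correct. The only difference is that you build the cover with $\diam(\cU,d)\le\ep$ and Lebesgue number at least $\ep/4$ by hand, where the paper cites \cite[Lemma 3.4]{GS}. The gap is in Step 1.

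You claim that every compact subset of an open set $U$ with $\diam(U,d)\le\ep$ has diameter strictly less than $\ep$. This is false in a general compact metric space. If $a,b\in U$ satisfy $d(a,b)=\ep$, then $\{a,b\}$ is a compact subset of $U$ of diameter exactly $\ep$. In fact your claim is equivalent to ``no two points of $U$ are at distance exactly $\ep$''. That is the same as saying $U$ lies in the open ball of radius $\ep$ about each of its points, which is exactly what you needed. So the detour through inner regularity gains nothing. The intuition is Euclidean: an open subset of $\mathbb{R}^m$ of diameter $\le\ep$ cannot realize the distance $\ep$. In a general compact metric space, however, an open set (for instance a clopen one) can realize its diameter.

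No repair is possible, because the first inequality is false as stated. Take $X=\{0,1\}^{\ZZ}$ with $d(x,y)=2^{-\min\{|j|\,\colon\, x_j\neq y_j\}}$ for $x\neq y$, let $T$ be the shift, $\mu$ the $(\frac12,\frac12)$-Bernoulli measure, and $\ep=1$.
\begin{itemize}
\item The cover $\cU=\{X\}$ has $\diam(\cU,d)=1\le\ep$ and $\cU^n=\{X\}$. Hence $\overline{h}^S_\mu(1,\delta)=\underline{h}^S_\mu(1,\delta)=0$.
\item On the other hand, $d(x,y)<1$ if and only if $x_0=y_0$. So $B_n(x,1)=\{y\colon y_j=x_j,\ 0\le j<n\}$, which has measure $2^{-n}$.
\item Therefore $(1-\delta)\,2^n<\cN_\mu(n,1,\delta)\le 2^n$, and $\overline{h}^K_\mu(1,\delta)=\underline{h}^K_\mu(1,\delta)=\log 2>0$.
\end{itemize}
Gluing rescaled copies of this example along a sequence accumulating at a fixed point gives the same failure at every scale $\ep=2^{-k}$. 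So this is not an artifact of taking $\ep=\diam X$.

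You were right that the open-versus-closed issue is the point to check. The paper's own proof has exactly this defect: it asserts $U\subset B_n(x_U,\ep)$ for $U\in\cU^n$, which fails above for $U=X$.

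What the argument does prove is $\overline{h}^K_\mu(\ep',\delta)\le\overline{h}^S_\mu(\ep,\delta)$, and its lower analogue, for every $\ep'>\ep$. The reason is that each element of $\cU^n$ has $d_n$-diameter at most $\ep<\ep'$, so it lies in $B_n(x_U,\ep')$. Taking, say, $\ep'=2\ep$ suffices for every later use of the lemma in the paper. These entropies only enter through $\limsup_{\ep\to0^+}(\cdot)/\log(1/\ep)$ (or the corresponding $\liminf$), and $\log(1/(2\ep))/\log(1/\ep)\to1$.
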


\begin{proof}
We prove the inequalities in the first line; a similar reasoning shows the others. Given $\mu\in\cP_T(X)$, $\ep>0$ and $0 < \delta < 1$, fix $n\in\NN$ and $\cU$ such that $\diam(\cU,d)\leq\ep$. For each nonempty $U\in\cU^n$, take $x_U\in U$. Then $U\subset B_n(x_U,\ep)$ and $$\cN_\mu(n,\ep,\delta)\,\,\leq\,\,\min\big\{\cF\subset\cU^n\,\colon\,\mu\big(\cup_{U\in\cF }B_n(x_U,\ep)\big)>1-\delta\big\}\,\,\leq\,\,\cN_\mu(\cU^n,\delta).$$
Thus, 
$$\overline{h}_\mu^K(\vep, \delta)\,\leq\,\overline{h}_\mu^S(\cU, \delta)$$ 
and the first inequality follows by taking the infimum over $\cU.$ 
\smallskip

Regarding the remaining inequalities, let $\cU$ be a cover of $X$ satisfying $\diam(\cU,d)\leq\ep$ and $\mathrm{Leb}(\cU)\geq\ep/4$ (which always exists, cf. \cite[Lemma 3.4]{GS}). Since every Bowen's ball $B_n(x,\ep/4)$ is contained in some element of $\cU^n$, we have $\cN_\mu(\cU^n,\delta)\leq \cN_\mu(n,\ep/4,\delta)$. Thus, $$\overline{h}_\mu^S(\ep, \delta)\,\,\leq\,\,\overline{h}_\mu^S(\cU, \delta)\,\,\leq\,\,\overline{h}_\mu^K(\vep/4, \delta)\,\,\leq\,\,\overline{h}_\mu^K(\ep/4,\delta/4).$$
\end{proof}

%%%%%%%%%%%%%%%%%%%%%%%%%%%%%%%%%%%%%%%%%%%%%%%%%%%%%%%%%%%%%%%%%%
\subsection{Brin-Katok $\vep$-entropy}

Given $\mu \in \cP_T(X)$, this notion of complexity is based on the average decay rate of the size, measured by $\mu$, of a dynamical ball in $X$. 
    
\begin{definition}\label{def:Brin-Katok}
\emph{ For $x\in X$ and $\ep>0$, the \emph{upper} and \emph{lower Brin-Katok entropies of $\mu \in \cP_T(X)$ at $x$ with respect to the scale $\ep$} are given by $$\overline{h}^{BK}_\mu(\ep,x)\,=\, \limsup_{n \, \to \, +\infty}\, -\,\frac{\log \, \mu\big(B_n(x,\ep)\big)}{n}\qquad\text{and}\qquad \underline{h}^{BK}_\mu(\ep,x)\,=\, \liminf_{n \, \to \, +\infty}\, -\,\frac{\log \, \mu\big(B_n(x,\ep)\big)}{n}$$
and the \emph{upper} and \emph{lower Brin-Katok entropies of $\mu$ at scale $\ep$} are given, respectively, by $$\overline{h}^{BK}_\mu(\ep)\,=\, \int\overline{h}^{BK}_\mu(\ep,x)\,d\mu(x)\qquad\text{and}\qquad \underline{h}^{BK}_\mu(\ep,x)\,=\, \int\underline{h}^{BK}_\mu(\ep,x)\,d\mu(x).$$}
\end{definition}

\smallskip

The following two properties will be useful later on.

\smallskip

\begin{lemma}\label{BK_leq_ep-entropy}
Let $\mu\in\cP_T(X)$ and $\ep>0$. Then $$\overline{h}^{BK}_\mu(\ep,x)\,\,\leq\,\, \limsup_{n\, \to \, +\infty}\, \frac{1}{n}\,\log S(X,n,\vep/2)\qquad\text{at }\,\mu\text{-a.e. }x\,\in\,X.$$
In particular, $$\overline{h}^{BK}_\mu(\ep)\,\,\leq\,\, \limsup_{n\, \to \, +\infty}\, \frac{1}{n}\,\log S(X,n,\vep/2).$$
\end{lemma}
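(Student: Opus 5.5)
The plan is a covering argument on the set of points where the local ball measures decay fast. Fix $\ep>0$ and write $h:=\limsup_{n\to+\infty}\frac1n\log S(X,n,\ep/2)$. If $h=+\infty$ there is nothing to prove, so assume $h<\infty$. For $\gamma>0$ consider the set
$$A_\gamma\,=\,\big\{x\in X\colon\ \overline{h}^{BK}_\mu(\ep,x)\,>\,h+2\gamma\big\}.$$
It suffices to show $\mu(A_\gamma)=0$ for every $\gamma>0$; the second assertion then follows by integrating the pointwise bound against $\mu$.

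By definition, every $x\in A_\gamma$ satisfies $\mu(B_n(x,\ep))<e^{-n(h+2\gamma)}$ for infinitely many $n$. Set
$$A_{\gamma,n}\,=\,\big\{x\in A_\gamma\colon\ \mu(B_n(x,\ep))<e^{-n(h+2\gamma)}\big\},$$
so that $A_\gamma\subset\bigcup_{n\ge N}A_{\gamma,n}$ for every $N$. For each $n$, choose a maximal $(n,\ep/2)$-separated subset $E_n$ of $A_{\gamma,n}$. Its cardinality is at most $S(X,n,\ep/2)$, and maximality gives $A_{\gamma,n}\subset\bigcup_{y\in E_n}B_n(y,\ep/2)$ (up to the closed/open-ball convention in $d_n$, which is harmless). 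Now fix $y\in E_n$. By the triangle inequality, $B_n(y,\ep/2)\subset B_n(x,\ep)$ for every $x\in B_n(y,\ep/2)$. Since $y\in A_{\gamma,n}$, this yields
$$\mu\big(B_n(y,\ep/2)\big)\,\le\,\mu\big(B_n(y,\ep)\big)\,<\,e^{-n(h+2\gamma)}.$$
Hence $\mu(A_{\gamma,n})\le S(X,n,\ep/2)\,e^{-n(h+2\gamma)}$.

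For $n$ large, $S(X,n,\ep/2)\le e^{n(h+\gamma)}$, so $\mu(A_{\gamma,n})\le e^{-n\gamma}$, which is summable in $n$. Therefore
$$\mu(A_\gamma)\,\le\,\sum_{n\ge N}e^{-n\gamma}\,\longrightarrow\,0 \quad\text{as } N\to+\infty,$$
a Borel–Cantelli type conclusion.

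Measurability of $A_{\gamma,n}$ needs a remark. The map $x\mapsto\mu(B_n(x,\ep))$ is lower semicontinuous because the balls are open, so $A_{\gamma,n}$ is Borel. Alternatively, one can work with outer measure throughout, since only upper bounds are needed.

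The main point to get right is the radius bookkeeping. Centers must be taken inside $A_{\gamma,n}$ so that the decay estimate transfers from $B_n(y,\ep)$ to the covering balls $B_n(y,\ep/2)$; this is precisely why the scale $\ep/2$ appears in the statement. Note also that invariance of $\mu$ is not used.
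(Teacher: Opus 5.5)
Your argument is correct and is essentially the paper's proof. It uses the same exceptional sets, covers the $n$-th bad set by $(n,\ep/2)$-balls centred at a maximal $(n,\ep/2)$-separated set of cardinality at most $S(X,n,\ep/2)\le e^{n(h+\gamma)}$, and concludes with a Borel--Cantelli summation of $e^{-n\gamma}$.

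The one difference is where the separated set lives. You pick it inside $A_{\gamma,n}$, whereas the paper picks it in all of $X$, uses the triangle inequality $B_n(y,\ep/2)\subset B_n(x,\ep)$, and discards the useless centres. Your choice makes $\mu(\overline{B}_n(y,\ep/2))\le\mu(B_n(y,\ep))$ immediate and settles the closed/open-ball issue cleanly. In fact your argument works with any radius $\ep'<\ep$ in place of $\ep/2$. So your closing remark that this is why $\ep/2$ appears describes the paper's mechanism rather than yours.
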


\begin{proof}
To simplify notation, set 
$$h_{\ep/2} \,=\,\limsup_{n\, \to \, +\infty}\, \frac{1}{n}\,\log S(X,n,\vep/2).$$
For each $\ga>0$, let $$A_\ga\,=\,\{x\,\in\,X\,\colon\, \overline{h}^{BK}_\mu(\ep,x) \,>\,h_{\ep/2}\,+\,\ga\}.$$
\medskip

\begin{claim} \emph{$\quad\mu(A_\ga)\,=\,0$ for every $\ga>0.$}
\end{claim}

\begin{proof}[Proof of the Claim:]
Fix $\ga>0$. For each $n\in\NN,$ let 
\begin{eqnarray*}
A_\ga^{(n)}&=&\,\big\{x\,\in\,X\,\colon\, -\frac{1}{n}\,\log\mu\big(B_n(x,\ep)\big) \,>\,h_{\ep/2}\,+\,\ga\big\} \\ &=&\,\big\{x\,\in\,X\,\colon\, \mu\big(B_n(x,\ep)\big) \,<\,e^{-n(h_{\ep/2}\,+\,\ga)}\big\}.
\end{eqnarray*}
Take $n_\ga\in\NN$ such that $$S(X,n,\vep/2)\,\,\leq\,\,e^{n(\,h_{\ep/2}\,\,+\,\frac{\ga}{2}\,)}\qquad\forall\, n\,\geq\, n_\ga.$$
Hence, 
$$A_\ga=\bigcap_{N\,\geq\,n_\ga} \,\bigcup_{n\,\geq\,N} \,A_\ga^{(n)}.$$ 
Fix $N\,\geq\,n_\ga$. For each $n\,\geq\,N$, let $E_n$ be a maximal $(n,\ep/2)$-separated subset of $X.$ Then $$A_\ga^{(n)}\,\subset\,\bigcup_{y\,\in\,E_n}\,B_n(y,\ep/2)\qquad\quad\text{and}\quad\qquad \#E_n\,\,\leq\,\,e^{n\,(\,h_{\ep/2}\,\,+\,\frac{\ga}{2}\,)}.$$ For each $x\in A_\ga^{(n)}$, take $
y\,\in\,E_n$ satisfying $x\,\in\,B_n(y,\ep/2)$. Then $B_n(y,\ep/2)\,\subset\,B_n(x,\ep)$, which implies that $\mu\big(B_n(y,\ep/2)\big)\,<\,e^{-n(h_{\ep/2}\,+\,\ga)}$. Define $$E^\prime_n\,\,=\,\,\Big\{y\,\in\,E_n\,\colon\,\mu\big(B_n(y,\ep/2)\big)\,<\,e^{-n(h_{\ep/2}\,+\,\ga)} \Big\}.$$ 
Then $A_\ga^{(n)}\,\subset\,\bigcup_{y\,\in\,E^\prime_n}\,B_n(y,\ep/2)$ and we get 
$$\mu\big( A_\ga^{(n)}\big)\,\,\leq\,\,\sum_{y\,\in\,E^\prime_n}\,\mu\big(B_n(y,\ep/2)\big)\,\,\leq\,\,\#E'_n\,\,\,e^{-n(h_{\ep/2}\,+\,\ga)}\,\,\leq\,\,\#E_n\,\,\,e^{-n(h_{\ep/2}\,+\,\ga)}\,\,\leq\,\,e^{\frac{-n\ga}{2}}.$$ 
Thus, 
$$\mu\big( A_\ga \big)\,\,\leq\,\,\mu\Big( \bigcup_{n\,\geq\,N} \,A_\ga^{(n)} \Big)\,\,\leq\,\,\sum_{n\,\geq\,N} \,e^{\frac{-n\ga}{2}}\,\,=\,\, \frac{e^{\frac{-N\ga}{2}}}{1\,-\,e^{\frac{-\ga}{2}}}\qquad\forall \,N\,\geq\,n_\ga.$$

\noindent We finish the proof of the claim by taking $N\,\to\,+\infty$.
\end{proof}

Now, Lemma~\ref{BK_leq_ep-entropy} is a straightforward consequence of the claim since, given  $\mu\in\cP_T(X)$ and $\ep>0$, 
$$X \setminus \Big\{x \in X \colon \,\,\overline{h}^{BK}_\mu(\ep,x) \,\leq\, \limsup_{n\, \to \, +\infty}\, \frac{1}{n}\,\log S(X,n,\vep/2)\Big\} \,\, \subseteq \bigcup_{\ga \,\in \,\mathbb{Q}\,\cap\,(0,+\infty)}  A_\ga.$$
\end{proof}

\begin{lemma}\label{BK_convex}
For every $\ep>0$, the maps 
$$\mu\,\in\,\cP_T(X)\,\,\,\longmapsto\,\,\,\overline{h}_\mu^{BK}(\ep) \quad \quad \text{and} \quad \quad \mu\,\in\,\cP_T(X)\,\,\,\longmapsto\,\,\,\underline{h}_\mu^{BK}(\ep)$$
are convex. 
\end{lemma}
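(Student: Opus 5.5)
We need to prove convexity of $\mu\mapsto \overline{h}^{BK}_\mu(\ep)$, meaning: for $\mu=t\mu_1+(1-t)\mu_2$ with $\mu_1,\mu_2\in\cP_T(X)$ and $t\in(0,1)$,
$$\overline{h}^{BK}_\mu(\ep)\le t\,\overline{h}^{BK}_{\mu_1}(\ep)+(1-t)\,\overline{h}^{BK}_{\mu_2}(\ep),$$
and likewise for the lower version.

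The plan is a pointwise comparison of the local entropies followed by integration. Since $\mu\ge t\mu_1$, every Bowen ball satisfies $\mu(B_n(x,\ep))\ge t\,\mu_1(B_n(x,\ep))$. Hence
$$-\frac1n\log\mu\big(B_n(x,\ep)\big)\le -\frac1n\log \mu_1\big(B_n(x,\ep)\big)+\frac{\log(1/t)}{n}.$$
The additive term vanishes as $n\to+\infty$. Taking $\limsup_n$ (resp. $\liminf_n$) therefore gives, for every $x\in X$,
$$\overline{h}^{BK}_\mu(\ep,x)\le \overline{h}^{BK}_{\mu_1}(\ep,x),$$
and the analogous inequality for the lower quantities. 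In the same way, $\overline{h}^{BK}_\mu(\ep,x)\le \overline{h}^{BK}_{\mu_2}(\ep,x)$ for every $x$.

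Next I integrate, using linearity of the integral in the measure:
$$\overline{h}^{BK}_\mu(\ep)=t\int \overline{h}^{BK}_\mu(\ep,x)\,d\mu_1+(1-t)\int \overline{h}^{BK}_\mu(\ep,x)\,d\mu_2.$$
Applying the first pointwise bound inside the $\mu_1$-integral and the second inside the $\mu_2$-integral yields
$$\overline{h}^{BK}_\mu(\ep)\le t\,\overline{h}^{BK}_{\mu_1}(\ep)+(1-t)\,\overline{h}^{BK}_{\mu_2}(\ep).$$
The lower case is identical.

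The only delicate step is measurability and integrability of the local entropy functions, which could a priori take the value $+\infty$. For measurability, the map $x\mapsto\mu(B_n(x,\ep))$ is lower semicontinuous because the balls are open. So each $-\frac1n\log\mu(B_n(x,\ep))$ is Borel with values in $[0,+\infty]$, and its $\limsup$ and $\liminf$ are Borel as well. Integrability causes no difficulty: all functions are nonnegative, so the integrals are well defined in $[0,+\infty]$ and the inequalities remain valid there. Moreover, $\mu(B_n(x,\ep))>0$ for $x\in\supp\mu$, which has full $\mu_1$- and $\mu_2$-measure since $\mu_i\ll\mu$. Lemma~\ref{BK_leq_ep-entropy} gives finiteness $\mu$-a.e. if desired, but it is not needed.
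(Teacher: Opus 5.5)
Your proof is correct and is essentially the paper's argument: from $\mu\ge t\mu_1$ (resp. $\mu\ge(1-t)\mu_2$) you get the pointwise bound $\overline{h}^{BK}_\mu(\ep,x)\le\overline{h}^{BK}_{\mu_i}(\ep,x)$, then integrate against each component and recombine. The only differences are that the paper states this for countable combinations $\mu=\sum_k t_k\mu_k$, which your argument covers verbatim, and that your remarks on Borel measurability and values in $[0,+\infty]$ supply details the paper leaves implicit.
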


\begin{proof}
We prove the assertion for the upper Brin-Katok entropy; the reasoning concerning the other map is similar. Fix $\vep > 0$. Let $(\mu_k)_{k\,\in\,\NN}$ be a sequence of invariant probability measures and $(t_k)_{k\,\in\,\NN}$ be a sequence of positive real numbers such that $\sum_{k\,\in\,\NN} \,t_k\,=\,1$. Consider $\mu\,=\,\sum_{k\,\in\,\NN} \,t_k\,\mu_k\,\in\,\cP_T(X).$ For each $k\in \NN$, $\mu\,\geq\,t_k\,\mu_k$. Therefore, for all $x\in X$ and $n\in \NN$, $$\log\,\mu\big(B_n(x,\ep)\big)\,\,\geq\,\,\log\,\mu_k\big(B_n(x,\ep)\big)\,+\,\log\,t_k$$
and so
$$\overline{h}_\mu^{BK}(\ep,x)\,\,\leq\,\,\overline{h}_{\mu_k}^{BK}(\ep,x)\qquad \forall\, x\in X.$$ 
\smallskip

\noindent We finish the proof by integrating both sides of the previous inequality with respect to $\mu_k$, multiplying both sides by $t_k$ and adding up over $k\in\NN.$
\end{proof}

%%%%%%%%%%%%%%%%%%%%%%%%%%%%%%%%%%%%%%%%%

\subsection{Rényi entropy}
Denote by $P$ any finite Borel measurable partition of $X$, by $\diam(P)$ its diameter, defined by $\max_{A \,\in \, P} \, \diam A$ and by $P^n$ the refined partition $\bigvee_{j=0}^{n-1}\, T^{-j}(P)$. Given $\mu \in \cP_T(X)$, the measure-theoretic entropy of $\mu$ with respect to $P$ is defined by
$$h_\mu(P)\,=\,\lim_{n \, \to \, +\infty} \, \frac{1}{n}\,H_\mu(P^n)$$
where $H_\mu(Q)\,=\,-\sum_{\mathrm{Q}\,\in\,Q}\,\mu(\mathrm{Q})\,\log\mu(\mathrm{Q})$ stands for the Shannon entropy.

%The measure-theoretic entropy of $\mu$ is given by $$h_\mu(T) \, = \, \sup_P \,h_\mu(P)$$ where the supremum is taken over all finite Borel measurable partitions $P$ of X. 

\begin{definition}
\emph{Given $\vep>0$, the \emph{Rényi $\vep$-entropy of $\mu$ at scale $\ep$} is defined by
$$\inf_{\diam(P)\, \leq\, \vep} \, h_\mu(P)$$
where the infimum is taken over all finite Borel measurable partitions of X with diameter at most $\vep$.}
\end{definition}
We refer the reader to \cite[Section~3]{GS} for more information on this concept.

%%%%%%%%%%%%%%%%%%%%%%%%%%%%%%%%%%%%%%%%%%%%%%%%%%%%%%%%%%%%%%%%%%
\section{Proof of Theorem~\ref{teo:VPg}}\label{se:proofG}

Assume that $F \colon \cK \times (0, +\infty)\, \to [0, +\infty]$ satisfies the s.m.m.d.~property and let $C>0$ be the constant provided by Definition~\ref{def:MP}. 
\smallskip

Clearly, for every $\mu \in \cK$, one has
$$\limsup_{\vep \, \to \, 0^+} \, \frac{\sup_{\mu \, \in \, \cK}\,F(\mu, \vep)}{\log \, (1/\vep)}\, \geq\, \limsup_{\vep \, \to \, 0^+} \, \frac{F(\mu, \vep)}{\log \, (1/\vep)}$$
and so
\begin{equation}\label{eq:geq}
\limsup_{\vep \, \to \, 0^+} \, \frac{\sup_{\mu \, \in \, \cK}\,F(\mu, \vep)}{\log \, (1/\vep)}\,\, \geq\, \sup_{\mu\,\in\,\cK}\,\limsup_{\ep\,\to\,0^+}\,\frac{F(\mu,\ep)}{\log(1/\ep)}.
\end{equation}
\smallskip

Regarding the converse inequality, set
$$\mathcal{L} \, =\, \limsup_{\vep \, \to \, 0^+} \, \frac{\sup_{\mu \, \in \, \cK}\,F(\mu, \vep)}{\log \, (1/\vep)}.$$
If $\mathcal{L}=0$, then \eqref{eq:geq} already ensures that
$$\limsup_{\vep \, \to \, 0^+} \, \frac{\sup_{\mu \, \in \, \cK}\,F(\mu, \vep)}{\log \, (1/\vep)}\,\, =\, \sup_{\mu\,\in\,\cK}\,\limsup_{\ep\,\to\,0^+}\,\frac{F(\mu,\ep)}{\log(1/\ep)}.$$
If $\mathcal{L}>0$,  let $(\ep'_\ell)_{\ell \, \in \mathbb{N}}$ be a sequence of scales converging to zero such that 
\begin{equation*}
\mathcal{L}\,=\, \lim_{\ell\,\to\,+\infty}\,\frac{\sup_{\mu\,\in\,\cK}\, F(\mu,\ep'_\ell)}{\log\,(1/\ep'_\ell)}.
\end{equation*} 
Taking a subsequence if necessary, we can assume that 
$$\ep'_\ell \, < \, 2^{-\ell} \quad \quad \forall\, \ell \in \NN.$$

\smallskip

Fix a sequence $(t_\ell)_{\ell \, \in \mathbb{N}}$ of positive real numbers with polynomial decay to zero as $\ell$ goes to $+\infty$ and such that $\sum_{\ell \, \in \mathbb{N}} \,t_\ell\,=\,1$. For example, we may choose $t_\ell\,=\,6/(\pi\,\ell)^2$ for every $\ell \in \NN$. Then, since $\ep'_\ell \, < \, 2^{-\ell}$ for every $\ell \in \NN$, the sequence $\ep_\ell\,= t_\ell\,\ep'_\ell/C$ satisfies
$$\lim_{\ell\,\to\,+\infty}\,\frac{\log\,(1/\ep_\ell)}{\log\,(1/\ep'_\ell)} \, = \, 1$$
and so 
$$\mathcal{L} \,\,=\,\, \lim_{\ell\,\to\,+\infty}\,\frac{\sup_{\mu\,\in\,\cK}\, F(\mu,\ep'_\ell)}{\log\,(1/\ep'_\ell)} \,\,=\,\,  \lim_{\ell\,\to\,+\infty}\,\frac{\sup_{\mu\,\in\,\cK}\, F(\mu,C\,\ep_\ell\,/t_\ell)}{\log\,(1/\ep_\ell)}.$$

\medskip

\noindent \textbf{Case 1}: $\mathcal{L} < +\infty$. \\

For every $N \in \NN$, there exists $L(N)\in\NN$ such that 
\begin{equation}\label{L(N)condition}
\sup_{\mu\,\in\,\cK}\, F(\mu,C\,\ep_\ell\,/t_\ell)\,\, > \,\,\Big(1-\frac{1}{N}\Big)\,\cdot\,\mathcal{L}\,\cdot \,\log\,(1/\ep_\ell) \qquad \forall\,\ell \geq L(N).
\end{equation} 
Without loss of generality, we may assume that $$L(N+1)\,>\,L(N) \quad \quad \forall\, N\in\NN.$$
Consider the subsequence $(\ep_{L(N)})_{N\,\in\,\NN}$ of $(\ep_\ell)_{\ell \,\in\, \NN}$ and a sequence of measures $(\mu_N)_{N\,\in\,\NN}$ in $\cK$ such that 
\begin{equation}\label{eq:F}
F\big(\mu_N,C\,\ep_{L(N)}\,/t_{L(N)}\big) \,\,>\,\, 
    \Big(1-\frac{1}{N}\Big)\,\cdot\,
\mathcal{L}\,\cdot\,\log\,(1/\ep_{L(N)}) \qquad\forall N\in\NN.  
\end{equation}
Let $\mu_0\in\cK$ be defined by $$\mu_0\,=\,\beta\,\cdot\sum_{N\,\in\,\NN}\,t_{L(N)}\,\mu_N$$
where $\beta$ is a normalization constant that is greater than or equal to $1$ since each natural number appears at most once in the sequence $\big({L(N)}\big)_{N\,\in\,\NN}$. Then 
$$\mu_0\,\geq \,t_{L(N)}\,\mu_N \quad \quad \forall\, N\in\NN$$
so, by the s.m.m.d.~property in $\cK$ and the inequality \eqref{eq:F}, we get 
$$F\big(\mu_0,\ep_{L(N)}\big) \,\,\geq \,\, F\big(\mu_N,C\,\ep_{L(N)}\,/t_{L(N)}\big)  \,\,>\,\,
    \Big(1-\frac{1}{N}\Big)\,\cdot\,
\mathcal{L}\,\cdot\,\log\,(1/\ep_{L(N)}) \qquad\forall\, N\in\NN.$$
By dividing the previous estimates by $\log\,(1/\ep_{L(N)})$ and taking $N\,\to\,+\infty$, we obtain 
\begin{equation}\label{eq:leq}
\limsup_{\ep\,\to\,0^+}\,\frac{F(\mu_0,\ep)}{\log\,(1/\ep)}\,\,\geq\,\,\limsup_{N\,\to\,+\infty}\,\frac{F(\mu_0,\ep_{L(N)})}{\log\,(1/\ep_{L(N)})}\,\,\geq\,\,\mathcal{L}.
\end{equation}
Bringing together \eqref{eq:geq} and \eqref{eq:leq}, we conclude that
$$\mathcal{L}\,\, =\, \max_{\mu\,\in\,\cK}\,\limsup_{\ep\,\to\,0^+}\,
\frac{F(\mu,\ep)}{\log(1/\ep)}.$$

\smallskip

\noindent \textbf{Case 2}: $\mathcal{L} = +\infty$. \\

The proof in this case proceeds in a similar way after replacing $\big(1-\frac{1}{N}\big)\,\cdot\,\mathcal{L}$ on the right-hand side of \eqref{L(N)condition} by $N$.
\smallskip

Assuming that $F$ is non-increasing in $\vep$, the limits
$$\limsup_{\ep\,\to\,0^+}\,
\frac{F(\mu,\ep)}{\log(1/\ep)}\qquad\text{and}\qquad\limsup_{\ep\,\to\,0^+}\,
\frac{\sup_{\mu\,\in\,\cK}\,F(\mu,\ep)}{\log(1/\ep)}$$ are attained at any sequence $\ep_k\rightarrow 0^+$ satisfying $$\lim_{k\,\to\,+\infty}\frac{\log \ep_{k+1}}{\log \ep_{k}}\,\,=\,\,1.$$
The equality \eqref{eq:vpnon} is deduced by an analogous reasoning. 
\smallskip

The strong convex geometry of $\cK_{max}$ described in (c) is an immediate consequence of \eqref{StrongConcavity}.

%%%%%%%%%%%%%%%%%%%%%%%%%%%%%%%%%%%%%%%%%%%%%%%%%%%%%%%%%%%%%%%%%%
\section{Proof of Theorem~\ref{teo:QD}}\label{se:MQD}

This section is divided into two parts, corresponding to the choice of the Wasserstein or L\'evy-Prokhorov distance to metrize the weak$^*$-topology. 

\subsection{$W_p$-mean quantization dimension}\label{se:QW}

Definition~\ref{def:qnumber} admits the following reformulation when $D$ is a $p$-Wasserstein metric for some $p \in [1, +\infty)$.

\begin{lemma}\cite[Proposition 3.2]{BB} The quantization number $Q_{\mu,W_p}(\ep)$ for the metric $W_p$ is the minimal cardinality of a set $F\subset X$ satisfying 
$$\int   d(x,F)^p\, d\mu(x)\,\,\leq\,\,\ep^p$$
where $d(x,F)\,=\,\min_{y\,\in\,F}\,d(x,y).$
\end{lemma}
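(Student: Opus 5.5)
The plan is to show that $Q_{\mu,W_p}(\ep)$ equals the minimal cardinality $M$ of a finite set $F\subset X$ with $\int d(x,F)^p\,d\mu(x)\le\ep^p$, by proving the two inequalities separately. The central tool is the nearest-point projection onto a finite set. One small point of care: Definition~\ref{def:qnumber} uses the strict inequality $D(\mu,\nu)<\ep$, whereas the reformulation uses $\le\ep^p$. We therefore work with the non-strict version of the quantization number throughout, as in \cite{BB}, and note that the two versions differ only at countably many scales. This does not affect any dimension computation.

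\textbf{Step 1: $Q_{\mu,W_p}(\ep)\le M$.} Let $F=\{y_1,\dots,y_M\}$ satisfy $\int d(x,F)^p\,d\mu\le\ep^p$. Fix a Borel nearest-point map $\pi\colon X\to F$. To make it measurable, take the Voronoi cells $V_i=\{x: d(x,y_i)=d(x,F)\}$ and break ties by the smallest index, so each resulting cell is Borel. Set $\nu=\pi_*\mu$, which is supported on at most $M$ points. The coupling $(\mathrm{id},\pi)_*\mu\in\Pi(\mu,\nu)$ gives
$$W_p(\mu,\nu)^p\le\int d(x,\pi(x))^p\,d\mu=\int d(x,F)^p\,d\mu\le\ep^p.$$

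\textbf{Step 2: $M\le Q_{\mu,W_p}(\ep)$.} Let $\nu$ be supported on a set $F$ of cardinality $N$ with $W_p(\mu,\nu)\le\ep$. By compactness of $\Pi(\mu,\nu)$ and lower semicontinuity of the cost, the infimum in $W_p$ is attained; otherwise one can take near-optimal couplings and pass to a limit. For any coupling $\pi$, the second marginal is concentrated on $F$, so $d(x,y)\ge d(x,F)$ for $\pi$-a.e. $(x,y)$. Hence
$$\int d(x,F)^p\,d\mu\le\int d(x,y)^p\,d\pi\le W_p(\mu,\nu)^p+\eta$$
for every $\eta>0$, which yields $\int d(x,F)^p\,d\mu\le\ep^p$. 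Therefore $M\le N$.

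\textbf{Main obstacle.} The only delicate points are the measurable selection in Step 1, which the ordered Voronoi cells handle, and the strict versus non-strict inequality at the boundary scale. For the latter, if the strict convention is kept, the argument above shows that the strict count at scale $\ep$ lies between the reformulated count at scales $\ep'<\ep$ and the reformulated count at scale $\ep$. This sandwich suffices for all later growth-rate arguments.
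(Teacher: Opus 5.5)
Your proposal is correct and is the standard argument behind the cited result \cite[Proposition~3.2]{BB}, which the paper quotes without proof. One direction pushes $\mu$ forward by a Borel nearest-point (Voronoi) projection onto $F$; the other uses $d(x,y)\ge d(x,F)$ on the support of any coupling. Your care about strict versus non-strict inequality is also justified: under the paper's literal convention $D(\mu,\nu)<\varepsilon$, the equality can fail at jump scales. For example, take $\mu=\tfrac12(\delta_0+\delta_1)$ on $[0,1]$ with $p=1$ and $\varepsilon=1/2$. Then $\{0\}$ satisfies the integral condition, yet no one-atom $\nu$ has $W_1(\mu,\nu)<1/2$. Your sandwich is exactly what makes this discrepancy harmless for the growth-rate and dimension arguments.
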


The next lemma relates the Katok entropies to the dynamical quantization numbers with respect to the Wasserstein distances.

\begin{lemma}\label{QuantGrowthRate}
Let $(X,d)$ be a compact metric space and $T\colon X \to X$ be a continuous map. For every $\mu\,\in\,\cP_T(X)$ and $p\,\in[1, +\infty)$,
\begin{equation}
\begin{alignedat}{4}\label{ineq}
\overline{h}^K_\mu(2\ep,3/4)\,\,\,
&\leq\,\,\,\overline{Q}_\mu(T,W_1,\ep)\,\,\,
&\leq\,\,\,\overline{Q}_\mu(T,W_p,\ep)\,\,\,
&\leq\,\,\,\overline{h}^K_\mu(2^{-1/p}\ep)
%\,=\,\sup_{\delta\,\in \, (0,1)}\overline{h}^K_\mu(2^{-1/p}\ep, \delta) 
\\ \nonumber
\underline{h}^K_\mu(2\ep,3/4)\,\,\,
&\leq\,\,\,\underline{Q}_\mu(T,W_1,\ep)\,\,\,
&\leq\,\,\,\underline{Q}_\mu(T,W_p,\ep)\,\,\,
&\leq\,\,\,\underline{h}^K_\mu(2^{-1/p}\ep).
%\,=\,\sup_{\delta\,\in \, (0,1)}\underline{h}^K_\mu(2^{-1/p}\ep, \delta)
\end{alignedat}
\end{equation}
\end{lemma}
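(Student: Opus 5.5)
We prove the three inequalities in the first line for each fixed $n$, working with quantities at level $n$ and the metric $d_n$, and then take $\limsup_n \frac1n\log$ (respectively $\liminf_n$ for the second line). The reformulation from \cite[Proposition 3.2]{BB}, applied to $(X,d_n)$, says that $Q_{\mu,(W_p)_n}(\ep)$ is the least cardinality of a set $F\subset X$ with $\int d_n(x,F)^p\,d\mu(x)\le\ep^p$. We use this throughout.

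\emph{Middle inequality.} By Jensen, or by \eqref{MetricRelations} with $q=1$ applied to $d_n$, we have $\int d_n(x,F)\,d\mu\le(\int d_n(x,F)^p\,d\mu)^{1/p}$. Hence any $F$ admissible for $W_p$ at scale $\ep$ is admissible for $W_1$. So $Q_{\mu,(W_1)_n}(\ep)\le Q_{\mu,(W_p)_n}(\ep)$ for every $n$, and the inequality passes to the limits.

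\emph{Left inequality.} Let $F$ be optimal for $W_1$ at level $n$, so $\int d_n(x,F)\,d\mu\le\ep$. Markov's inequality gives
\[
\mu\{x: d_n(x,F)\ge 2\ep\}\le \tfrac12 .
\]
Hence the open Bowen balls $B_n(y,2\ep)$, $y\in F$, cover a set of measure at least $1/2>1-3/4$. Therefore $\cN_\mu(n,2\ep,3/4)\le \#F=Q_{\mu,(W_1)_n}(\ep)$, and taking limits yields $\overline{h}^K_\mu(2\ep,3/4)\le\overline{Q}_\mu(T,W_1,\ep)$. The boundary case of a non-strict Markov inequality is harmless, since $1/2>1/4$ leaves slack.

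\emph{Right inequality.} This is the main step, because here a cover of a large set by Bowen balls must be turned into a set $F$ with small $p$-th moment of $d_n(\cdot,F)$, and the uncovered part has to be controlled using only $\diam X$.

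Fix $\delta\in(0,1)$ small and set $r=2^{-1/p}\ep$. Choose $\cN_\mu(n,r',\delta)$ balls of radius $r'$ slightly less than $r$ whose union $A$ has $\mu(A)>1-\delta$, and let $F_0$ be their centres. Then
\[
\int d_n(x,F_0)^p\,d\mu\le r'^p+\delta\,\diam(X,d)^p .
\]
This is at most $\ep^p/2+\delta\,\diam^p$, which is at most $\ep^p$ once $\delta\le \ep^p/(2\diam^p)$. Note that $d_n$ has the same diameter as $d$ in the relevant sense, since $d_n\le\diam(X,d)$.

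The factor $2^{-1/p}$ is exactly what leaves room for the tail. Consequently $Q_{\mu,(W_p)_n}(\ep)\le \cN_\mu(n,r',\delta)$ for that fixed $\delta$, and so
\[
\overline{Q}_\mu(T,W_p,\ep)\le\overline{h}^K_\mu(r',\delta)\le\overline{h}^K_\mu(r'),
\]
using that $\overline{h}^K_\mu(\cdot,\delta)$ increases as $\delta\downarrow0$.

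The slight loss $r'<r$ can be removed. Since $\int d_n(x,F_0)^p\,d\mu< r^p\mu(A)+\delta\diam^p$ when $x\in A$ gives strict inequality $d_n(x,F_0)<r$, we may take $r'=r$ directly, with $\delta$ chosen so that $\delta\diam^p\le\ep^p/2$. This gives $\overline{Q}_\mu(T,W_p,\ep)\le \overline{h}^K_\mu(2^{-1/p}\ep)$.

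The $\liminf$ version follows identically, since all comparisons hold for each $n$ with constants independent of $n$.
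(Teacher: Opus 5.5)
Your proof is correct and follows the paper's argument: Markov's inequality gives the left bound, the monotonicity $W_1\le W_p$ (Jensen) gives the middle one, and for the right bound you cover a set of measure $>1-\delta$ by Bowen balls of radius $2^{-1/p}\ep$, choosing $\delta$ so that $\delta\,\diam(X,d)^p\le \ep^p/2$ absorbs the uncovered part. Your per-$n$ comparison $Q_{\mu,(W_p)_n}(\ep)\le\cN_\mu(n,2^{-1/p}\ep,\delta)$ for one fixed small $\delta$ is a slightly more direct version of the paper's $e^{ns}$ argument, and the detour through $r'<r$ is unnecessary because radius $r=2^{-1/p}\ep$ works directly, as you note.
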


\begin{proof} We shall prove the statement for the upper limits, starting with the first inequality. Given $n\in\NN$, let $F\subset X$ be such that  
$$\#F\,=\,Q_{\mu,W_{1,n}(\ep)}
\qquad\text{and}\qquad\int   d_n(x,F)\,d \mu(x)\,\,\leq\,\,\ep.$$
By Markov's inequality %(cf. \cite{Ross})
,
$$\mu\Big(d_n(\cdot\,,\,F)\,\geq\,2\,\ep \Big)\,\,\leq\,\,\frac{1}{2\,\ep}\,\int   d_n(x,F)\,d \mu(x)\,\,\leq\,\,\frac{1}{2}.$$ Thus,
$$\mu\Big(\bigcup_{y\,\in\,F} \, B_n(y,2\,\ep) \Big)
\,\,\geq\,\, 1/2\,\,>\,\,1\,-\,3/4$$
and so
$$Q_{\mu,W_{1,n}(\ep)}\,\,\geq\,\, \mathcal{N}_\mu(n,2\,\vep, 3/4).$$
We conclude the argument by applying the logarithm to both sides of this inequality, dividing by $n$ and taking $n \to +\infty.$ 
\smallskip

The second inequality is immediate since the map $p\,\mapsto\,W_p$ is non-decreasing (cf. \cite[p.~210]{Vi}) or \eqref{MetricRelations}). Regarding the third inequality, fix 
$$0 \,<\,\delta\,<\,\frac{1}{2}\,\Big(\frac{\ep}{\diam(X,d)}\Big)^p$$ and 
$$ s\,>\,\overline{h}^K_\mu(2^{-1/p}\ep, \delta).$$
Then, for sufficiently large $n\in\NN$, we have $$\cN_\mu(n,2^{-1/p}\ep, \delta)\,<\,e^{n\,s}.$$
Take $F\subset X$ such that 
$$\mu\Big( \bigcup_{y\,\in\,F} \, B_n(y,2^{-1/p}\,\ep) \Big)\,\,\,>\,\,\, 1-\delta\qquad\text{and}\qquad \#F\,<\,e^{n\,s}.$$ 
Thus, 
\begin{align*}
\int   d_n(\cdot\,,\,F)^p\,d \mu \,\,&\leq\,\,\int_{\displaystyle \bigcup_{y\,\in\,F} \, B_n(y,2^{-1/p}\,\ep)} \,\,  d_n(\cdot\,,\,F)^p\,d \mu\,\,+\,\,\int_{\displaystyle X \setminus \bigcup_{y\,\in\,F} \, B_n(y,2^{-1/p}\,\ep)}\,\,    d_n(\cdot\,,\,F)^p\,d \mu \\
&\leq \,\, \frac{\ep^p}{2}\,\,\mu\Big( \bigcup_{y\,\in\,F} \, B_n(y,2^{-1/p}\,\ep) \Big)\,\,+\,\, \delta \,\cdot\, \diam(X,d_n)^p\\ 
&\leq \,\, \frac{\ep^p}{2}\,\,+\,\, \delta \, \cdot \, \diam(X,d)^p\,\,\,\,\,\leq\,\,\,\,\, \ep^p.
\end{align*}
Thus, 
$$Q_{\mu,D_n}(\ep)\,\,\leq\,\,\#F\,\,<\,\, e^{ns}$$
and so 
$$\overline{Q}_\mu(T,W_p,\ep)\,\,\leq\,\,s.$$
To finish the proof, we just have to make  $s\,\to\,\overline{h}^K_\mu(2^{-1/p}\ep, \delta)$ and take the limit as $\delta\to 0^+$.
% The last equalities in the statement of Lemma~\ref{QuantGrowthRate} are a consequence of the fact that the functions $\overline{h}^K_\mu(2^{-1/p}\ep, \delta)$ and $\underline{h}^K_\mu(2^{-1/p}\ep, \delta)$  increase as $\delta$ decreases.
\end{proof}

By \eqref{Flimit}, \eqref{vpErcai} and the fact that 
$$\Big\{\,\overline{h}^K_\mu(2\ep,3/4),\,\, \overline{h}^K_\mu(\ep),\,\,\underline{h}^K_\mu(2\ep,3/4),\,\,
\underline{h}^K_\mu(\ep)\Big\} \,\subset \,\cD$$ 
it is easy to see that Lemma~\ref{QuantGrowthRate} yields:

\medskip

\noindent $(i)$ For every $\mu\,\in\,\cE_T(X)$, $F\,\in\,\cD$ and $p\,\in[1, +\infty)$,
\begin{eqnarray*}
\overline{\mathrm{mdim}}_Q(X,d,T,W_p,\mu) &\,=\,& \limsup_{\ep\,\to\,0^+}\,\frac{F(\mu,\ep)}{\log\,(1/\ep)} %\\ 
%\underline{\mathrm{mdim}}_Q(X,d,T,W_p,\mu) &\,=\,&\liminf_{\ep\,\to\,0^+}\,\frac{F(\mu,\ep)}{\log\,(1/\ep)}.
\end{eqnarray*}
and, for every $p\,\in[1, +\infty)$,
\begin{equation*}
    \begin{alignedat}{3}
        \overline{\mdim}_M(X,d,T) \, \,\,&=\,\,\, \limsup_{\varepsilon \, \to \, 0^+}\, \sup_{\mu \,\in\, \mathcal{E}_T(X)} \, \frac{\overline{Q}_\mu(T, W_p,\ep)}{\log\,(1/\varepsilon)} \, \,\,&=\,\,\,\, \limsup_{\varepsilon \, \to \, 0^+}\, \sup_{\mu \,\in\, \mathcal{E}_T(X)} \, \frac{\underline{Q}_\mu(T, W_p,\ep)}{\log\,(1/\varepsilon)}
        %\\
        %&=\,\, \,\limsup_{\varepsilon \, \to \, 0^+}\, \sup_{\mu \,\in\, \mathcal{P}_T(X)} \, \frac{\overline{Q}_\mu(T,W_p,\ep)}{\log\,(1/\varepsilon)} \,\, \,&=\,\,\, \limsup_{\varepsilon \, \to \, 0^+}\, \sup_{\mu \,\in\, \mathcal{P}_T(X)} \, \frac{\overline{Q}_\mu(T,W_p,\ep)}{\log\,(1/\varepsilon)}
        \mathrlap{.}
    \end{alignedat}
\end{equation*} 
\smallskip
Combining the previous equalities with \eqref{QuantLeqEnt}, we obtain:

\medskip

\noindent $(ii)$ For every $p\,\in[1, +\infty)$,
\begin{equation*}
    \begin{alignedat}{3}
        \overline{\mdim}_M(X,d,T) \, \,\,&=\,\,\, \limsup_{\varepsilon \, \to \, 0^+}\, \sup_{\mu \,\in\, \mathcal{E}_T(X)} \, \frac{\overline{Q}_\mu(T,W_p,\ep)}{\log\,(1/\varepsilon)} \, \,\,&=\,\,\,\, \limsup_{\varepsilon \, \to \, 0^+}\, \sup_{\mu \,\in\, \mathcal{E}_T(X)} \, \frac{\underline{Q}_\mu(T,W_p,\ep)}{\log\,(1/\varepsilon)}
        \\
        &=\,\, \,\limsup_{\varepsilon \, \to \, 0^+}\, \sup_{\mu \,\in\, \mathcal{P}_T(X)} \, \frac{\overline{Q}_\mu(T,W_p,\ep)}{\log\,(1/\varepsilon)} \,\, \,&=\,\,\, \limsup_{\varepsilon \, \to \, 0^+}\, \sup_{\mu \,\in\, \mathcal{P}_T(X)} \, \frac{\overline{Q}_\mu(T,W_p,\ep)}{\log\,(1/\varepsilon)}
        \mathrlap{.}
    \end{alignedat}
\end{equation*} 

\medskip

To finish the proof of Theorem~\ref{teo:QD}, we need further information.

\begin{lemma}\cite[Lemma 3.18]{BB}\label{def:MPQ}
For every $\mu, \nu \in \cP(X)$ such that $\mu \geq t\nu$ for some $t > 0$, one has
$$Q_{\mu,W_1}(t\ep)\, \geq\, Q_{\nu,W_1}(\ep) \quad \forall\, \vep>0.$$
\end{lemma}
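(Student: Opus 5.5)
We need to prove: if $\mu\geq t\nu$ with $t>0$ (necessarily $t\le 1$), then $Q_{\mu,W_1}(t\ep)\geq Q_{\nu,W_1}(\ep)$. By the reformulation of $Q_{\mu,W_p}$ (\cite[Proposition 3.2]{BB}), it suffices to show the following: whenever a finite set $F\subset X$ satisfies $\int d(x,F)\,d\mu(x)\le t\ep$, then $\int d(x,F)\,d\nu(x)\le \ep$. Applying this to a set $F$ realizing $Q_{\mu,W_1}(t\ep)$ then gives a set of the same cardinality admissible for $\nu$ at scale $\ep$, hence $Q_{\nu,W_1}(\ep)\le \#F = Q_{\mu,W_1}(t\ep)$.

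The key step is the monotonicity of integrals of nonnegative functions under domination of measures. Since $\mu- t\nu$ is a nonnegative Borel measure and $x\mapsto d(x,F)$ is a nonnegative continuous function, we have
$$t\int d(x,F)\,d\nu(x)\,\le\,\int d(x,F)\,d\mu(x)\,\le\, t\ep.$$
Dividing by $t>0$ gives $\int d(x,F)\,d\nu\le \ep$, as required.

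One point needs care: the paper's definition uses a strict inequality $D(\mu,\nu)<\ep$, while the reformulation uses $\le \ep^p$. If one prefers to work directly with the strict version, the same argument goes through, because $<$ is preserved after dividing by $t$. Alternatively, one can argue with transport plans: if $\mu'$ is supported on $F$ with $W_1(\mu,\mu')<t\ep$, take the map sending each point to a nearest point of $F$ (chosen measurably, since $F$ is finite). Pushing $\nu$ forward by this map produces a measure supported in $F$ whose $W_1$-distance to $\nu$ is at most $\int d(x,F)\,d\nu$. The minimal cost for $\mu$ onto $F$ is likewise $\int d(x,F)\,d\mu\le W_1(\mu,\mu')$. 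Combining these estimates yields $W_1(\nu,\cdot)<\ep$.

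There is no real obstacle here. The only subtle points are the measurable nearest-point selection and keeping track of the strict versus non-strict inequality conventions. The constant-free nature of the bound (the constant $C=1$ in the s.m.m.d. property) comes precisely from the linearity of $W_1$ cost in the measure.
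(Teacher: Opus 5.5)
Your proof is correct. The paper gives no argument of its own and simply cites \cite[Lemma 3.18]{BB}; your route, which reduces to $Q_{\mu,W_1}(\ep)=\min\{\#F : \int d(x,F)\,d\mu\le \ep\}$ and then uses $t\int d(\cdot,F)\,d\nu\le\int d(\cdot,F)\,d\mu$, is the standard argument behind that lemma. Your nearest-point pushforward remark, which shows that $\int d(x,F)\,d\mu$ is the attained minimum of $W_1(\mu,\mu')$ over $\supp\mu'\subset F$, also correctly handles the mismatch between the strict inequality in the paper's definition and the non-strict one in the cited reformulation.
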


\smallskip

Applying the previous lemma to $(X,d_n)$, we obtain \begin{eqnarray*}
    \overline{Q}_{\mu}(T,W_1,t\ep)\, &\geq&\, \overline{Q}_{\nu}(T,W_1,\ep)
    \\
     \underline{Q}_{\mu}(T,W_1,t\ep)\, &\geq&\, \underline{Q}_{\nu}(T,W_1,\ep)
\end{eqnarray*}

\noindent whenever $\mu \geq t\nu$ for some $t>0$. This means that the $W_1$-dynamical quantization numbers satisfy the s.m.m.d.~property.
%smmd.
\smallskip

Since the map $p\in[1, +\infty)\,\mapsto\,W_p$ is non-decreasing (see \eqref{MetricRelations}), combining Corollary~\ref{teo:VP} with \eqref{MQuantLeqMDim} we conclude that, for every $p\,\in[1, +\infty)$,
\begin{equation}
\begin{alignedat}{2}
\mathrm{\overline{mdim}_M}(X,d,T)\,\,\,\, 
&=\,\,\,\max_{\mu\,\in\,\cP_T(X)}\,\, \,\mathrm{\overline{mdim}_Q}(X,d,T,W_1,\mu) \\ 
& \leq\,\,\, \max_{\mu\,\in\,\cP_T(X)}\,\, \,\mathrm{\overline{mdim}_Q}(X,d,T,W_p,\mu) \\ 
& \leq\,\,\,\,\mathrm{\overline{mdim}_M}(X,d,T)
\mathrlap{.}
\end{alignedat}
\end{equation}

\smallskip

\subsection{$LP$-mean quantization dimension}

Definition~\ref{def:qnumber} admits a reformulation when $D$ is the L\'evy-Prokhorov metric.

\begin{lemma}\cite[Proposition 3.3]{BB} The quantization number $Q_{\mu,LP}(\ep)$ for the $LP$ metric is the least number of closed balls with radius $\vep$ that cover any subset of $X$ with $\mu$ measure at least $1-\vep$.
\end{lemma}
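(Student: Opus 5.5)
The plan is to prove the two inequalities between $Q_{\mu,LP}(\ep)$ and the covering quantity separately. Denote by $M_\mu(\ep)$ the least number of closed balls of radius $\ep$ whose union has $\mu$-measure at least $1-\ep$. Throughout I would work with the convention of \cite{BB}, where the admissible measures $\nu$ satisfy $LP(\mu,\nu)\le \ep$. The first preliminary step is to note that the infimum in the definition of $LP$ may be taken with closed $\ep$-neighborhoods $\overline{V}_\ep(E)=\{x\colon d(x,E)\le \ep\}$ and is attained. Indeed, if the two defining inequalities hold for every $\ep'>\ep$, then $\bigcap_{\ep'>\ep}V_{\ep'}(E)=\overline{V}_\ep(E)$, and continuity from above of $\mu$ gives $\nu(E)\le \mu(\overline{V}_\ep(E))+\ep$, and symmetrically.

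\emph{Lower bound $Q_{\mu,LP}(\ep)\ge M_\mu(\ep)$.} Let $\nu$ be supported on $\{y_1,\dots,y_N\}$ with $LP(\mu,\nu)\le\ep$. Apply the closed-neighborhood inequality to $E=\{y_1,\dots,y_N\}$. Since $\nu(E)=1$, we get
\[
1\le \mu\Big(\bigcup_{i=1}^N \overline{B}(y_i,\ep)\Big)+\ep .
\]
So $N$ closed balls of radius $\ep$ cover a set of $\mu$-measure at least $1-\ep$, whence $N\ge M_\mu(\ep)$.

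\emph{Upper bound $Q_{\mu,LP}(\ep)\le M_\mu(\ep)$.} Let $\overline{B}(y_1,\ep),\dots,\overline{B}(y_N,\ep)$ be balls whose union $A$ satisfies $\mu(A)\ge 1-\ep$. Make the cover disjoint by setting
\[
A_1=A\cap\overline{B}(y_1,\ep),\qquad A_i=A\cap\overline{B}(y_i,\ep)\setminus\bigcup_{j<i}\overline{B}(y_j,\ep),
\]
and put
\[
\nu=\sum_i \mu(A_i)\,\delta_{y_i}+\mu(X\setminus A)\,\delta_{y_1}.
\]
This $\nu$ is supported on at most $N$ points. For a Borel set $E$, let $I=\{i\colon y_i\in E\}$.
\begin{itemize}
\item Since $A_i\subset\overline{V}_\ep(E)$ for $i\in I$,
\[
\nu(E)\le \sum_{i\in I}\mu(A_i)+\mu(X\setminus A)\le \mu(\overline{V}_\ep(E))+\ep .
\]
\item Conversely, $E\cap A\subset\bigcup_{i\in J}A_i$ with $J=\{i\colon A_i\cap E\neq\emptyset\}$, and each such $y_i$ lies in $\overline{V}_\ep(E)$. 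Hence
\[
\mu(E)\le \sum_{i\in J}\mu(A_i)+\ep\le \nu(\overline{V}_\ep(E))+\ep .
\]
\end{itemize}
By the preliminary remark this yields $LP(\mu,\nu)\le\ep$, so $Q_{\mu,LP}(\ep)\le N$.

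The main obstacle is the boundary bookkeeping between the strict inequality $D(\mu,\nu)<\ep$ in Definition~\ref{def:qnumber}, the open neighborhoods in the definition of $LP$, and the closed balls in the statement. The argument above is exact for the non-strict convention. With the strict one, the same proof shows
\[
M_\mu(\ep)\le Q_{\mu,LP}(\ep)\le M_\mu(\ep')\qquad\text{for all }\ep'<\ep ,
\]
and such a shift of scale is harmless for all later uses, since everything is normalised by $\log(1/\ep)$ and by $n$. I would state that explicitly rather than fight for exact equality at the boundary.
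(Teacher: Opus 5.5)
Your proof is correct and is the standard argument behind \cite[Proposition~3.3]{BB}, which the paper quotes without proof. Testing the closed-neighbourhood form of the L\'evy--Prokhorov inequality on the finite support of $\nu$ gives $Q_{\mu,LP}(\ep)\ge M_\mu(\ep)$; pushing the mass of a disjointified ball cover to the centres, with the leftover mass (at most $\ep$) sent to one centre, gives the reverse inequality.

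Your caution about the boundary is justified and worth keeping. Exact equality needs the non-strict convention $LP(\mu,\nu)\le\ep$, which is also the convention under which the quoted $W_p$ characterisation with $\le\ep^p$ holds. Under the strict inequality of Definition~\ref{def:qnumber} it can fail: take $X=\{0,1\}$ with $d(0,1)=1$, $\mu=\frac12(\delta_0+\delta_1)$ and $\ep=\frac12$. One closed ball suffices, but $LP(\mu,\delta_0)=LP(\mu,\delta_1)=\frac12$, so the strict $Q_{\mu,LP}(\frac12)$ equals $2$. Your sandwich $M_\mu(\ep)\le Q_{\mu,LP}(\ep)\le M_\mu(\ep')$ for $\ep'<\ep$ is the right replacement.
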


Observe that if we consider the space $(X,d_n)$, the above formulation resembles the cover number $\mathcal{N}_\mu(n,\vep, \delta)$ from the definition of the $(\ep,\delta)$-Katok entropy when $\delta=\ep$, the only difference being that the balls in the covers are closed. Therefore:

\begin{lemma}\label{le:QK}
For every $\mu\in \cP_T(X)$ and $\ep>0$, 
\begin{equation*}
    \begin{alignedat}{2}
\overline{h}^K_\mu(2\ep,\ep)\,\,\,\,&\leq\,\,\,\,
\overline{Q}_\mu(T,LP,\ep) \,\,\,\,&\leq\,\,\,\, \overline{h}^K_\mu(\ep,\ep) \\\underline{h}^K_\mu(2\ep,\ep)\,\,\,\,&\leq\,\,\,\,\underline{Q}_\mu(T,LP,\ep) \,\,\,\,&\leq\,\,\,\, \underline{h}^K_\mu(\ep,\ep) 
    \end{alignedat}.
\end{equation*}
\end{lemma}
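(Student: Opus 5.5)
The plan is to apply the reformulation of $Q_{\mu,LP}$ from \cite[Proposition 3.3]{BB} to the compact metric space $(X,d_n)$. Then $Q_{\mu,LP_n}(\ep)$ is the least number of closed $d_n$-balls of radius $\ep$ whose union covers some set of $\mu$-measure at least $1-\ep$. I will compare this, for each fixed $n$, with $\cN_\mu(n,\ep,\ep)$ and $\cN_\mu(n,2\ep,\ep)$. Then I take $\frac1n\log$ and pass to $\limsup_n$ (resp. $\liminf_n$), which gives both lines of the statement at once.

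\emph{Upper bound.} Let $B_n(y_1,\ep),\dots,B_n(y_k,\ep)$ be $k=\cN_\mu(n,\ep,\ep)$ open Bowen balls whose union has $\mu$-measure $>1-\ep$. Each open ball lies in the closed ball $\overline{B}_n(y_i,\ep)=\{z\colon d_n(y_i,z)\le\ep\}$. Hence the $k$ closed balls cover a set of measure $>1-\ep\ge 1-\ep$, and so $Q_{\mu,LP_n}(\ep)\le \cN_\mu(n,\ep,\ep)$. This yields $\overline{Q}_\mu(T,LP,\ep)\le\overline{h}^K_\mu(\ep,\ep)$, and likewise for the lower versions.

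\emph{Lower bound.} Take $k=Q_{\mu,LP_n}(\ep)$ closed balls $\overline{B}_n(y_i,\ep)$ covering a set $E$ with $\mu(E)\ge 1-\ep$. Each closed ball lies in the open ball $B_n(y_i,2\ep)$, so the open balls cover $E$. The one subtlety is strictness: Katok's count requires measure \emph{strictly} greater than $1-\delta$ with $\delta=\ep$, while we only have $\ge 1-\ep$. I would handle this by enlarging the sets: $\bigcup_i B_n(y_i,2\ep)\supseteq\bigcup_i\overline{B}_n(y_i,\ep')$ for any $\ep<\ep'<2\ep$. Since the closed balls of radius $\ep'$ decrease to those of radius $\ep$ as $\ep'\downarrow\ep$, this alone does not give strict inequality.

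A cleaner route is to note that if $\mu\big(\bigcup_i B_n(y_i,2\ep)\big)=1-\ep$ exactly, then the open union has measure exactly $1-\ep$ and contains $E$. In that degenerate case I would compare instead with $\cN_\mu(n,2\ep,\ep')$ for $\ep'$ slightly larger than $\ep$. So the approach is to first prove $\cN_\mu(n,2\ep,\delta)\le Q_{\mu,LP_n}(\ep)$ for every $\delta>\ep$. Then I would either absorb the boundary case, using that the paper's definition can be read with "$\geq$" at no cost to the exponential growth rate, or check that the degenerate equality cannot hold for all $n$ along the relevant subsequence.

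This strict-versus-nonstrict measure threshold is the only genuine obstacle; everything else is the closed-versus-open ball comparison $B_n(y,\ep)\subset\overline{B}_n(y,\ep)\subset B_n(y,2\ep)$. I expect the authors treat it as routine, consistent with the remark preceding the lemma that the only difference between the two counts is that the balls are closed. The final step is to take $\frac1n\log$ and the upper and lower limits in $n$.
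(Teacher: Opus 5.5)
Your overall plan is exactly the paper's: apply the Berger--Bochi reformulation in $(X,d_n)$ and use $B_n(y,\ep)\subset\overline{B}_n(y,\ep)\subset B_n(y,2\ep)$. The paper treats the lemma as immediate from that comparison, and your upper bound is complete.

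The lower bound, however, is left open. You isolate the strict-versus-nonstrict threshold correctly but do not resolve it.

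\begin{itemize}
\item Your second option cannot work in general. Take $T=\mathrm{id}$ on two points $x_1,x_2$ at distance $1$, with $\mu$ uniform and $\ep=1/2$. The reformulation gives $Q_{\mu,LP_n}(1/2)=1$, via $\overline{B}_n(x_1,1/2)=\{x_1\}$. But $\mu(B_n(x_1,1))=1/2$ exactly, so $\cN_\mu(n,1,1/2)=2$ for every $n$. Thus the degenerate case occurs for all $n$, and the per-$n$ inequality $\cN_\mu(n,2\ep,\ep)\le Q_{\mu,LP_n}(\ep)$ is false.
\item Your bound for $\delta>\ep$ only gives $\lim_{\delta\downarrow\ep}\overline{h}^K_\mu(2\ep,\delta)\le\overline{Q}_\mu(T,LP,\ep)$. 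This may be strictly weaker than the claim, because $\delta\mapsto\overline{h}^K_\mu(2\ep,\delta)$ is merely non-increasing, not right-continuous. Incidentally, this weaker form would suffice for the paper's later uses with $\overline{h}^K_\mu(2\ep,2\ep)$ and $\overline{h}^K_\mu(2\ep,1/2)$.
\item Your first option, that the threshold can be read as non-strict at no exponential cost, is true. But it is precisely the statement that needs an argument.
\end{itemize}

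The missing idea is to add one ball. Let $k=Q_{\mu,LP_n}(\ep)$ and $U=\bigcup_{i=1}^k B_n(y_i,2\ep)$, so that $\mu(U)\ge 1-\ep$. If equality holds, then $\mu(X\setminus U)=\ep>0$. Cover the compact space $X$ by finitely many open $d_n$-balls of radius $2\ep$; one of them, say $B$, satisfies $\mu(B\setminus U)>0$, so $\mu(U\cup B)>1-\ep$. Hence $\cN_\mu(n,2\ep,\ep)\le Q_{\mu,LP_n}(\ep)+1\le 2\,Q_{\mu,LP_n}(\ep)$ for every $n$. The factor $2$ disappears after taking $\frac{1}{n}\log$ and then $\limsup_n$ or $\liminf_n$, which yields both lower bounds in the statement.
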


In particular, for every $0<\ep<1/2$, we have
\begin{equation*}
    \begin{alignedat}{2}
\overline{h}^K_\mu(2\ep,1/2)\,\,\,\,&\leq\,\,\,\,\overline{Q}_\mu(T,LP,\ep) \,\,\,\,&\leq\,\,\,\, \overline{h}^K_\mu(\ep) \\\underline{h}^K_\mu(2\ep,1/2)\,\,\,\,&\leq\,\,\,\,\overline{Q}_\mu(T,LP,\ep) \,\,\,\,&\leq\,\,\,\, \underline{h}^K_\mu(\ep) 
    \end{alignedat}
\end{equation*}
and by a reasoning similar to that in the previous subsection, we prove items $(a)$ and $(b)$ of Theorem~\ref{teo:QD} when $D=LP$. 
\smallskip

To conclude, recall that, since the metric $LP$ is at least a uniform constant factor of $W_1$ (see~\eqref{MetricRelations}), we have
\begin{equation}
    \begin{alignedat}{2}
        \mathrm{\overline{mdim}_M}(X,d,T)\,\,\,\, &=\,\,\,\, \max_{\mu\,\in\,\cP_T(X)}\,\, \,\mathrm{\overline{mdim}_Q}(X,d,T,W_1,\mu) 
    \\ & \leq\,\,\,\, \max_{\mu\,\in\,\cP_T(X)}\,\, \,\mathrm{\overline{mdim}_Q}(X,d,T,LP,\mu) 
     \\ & \leq\,\,\,\,\mathrm{\overline{mdim}_M}(X,d,T)
    \mathrlap{.}
    \end{alignedat}
\end{equation}

The property $(d)$ of Theorem~\ref{teo:QD} is a straightforward consequence of the item $(c)$ of Theorem~\ref{teo:VPg}. The proof of Theorem~\ref{teo:QD} regarding the upper metric mean dimension is finished. A similar argument yields the corresponding statement for the lower metric mean dimension.

\section{Proof of Theorem~\ref{teo:EYZ}}\label{se:proofD}

Since the $(\ep,\delta)$-Katok entropy is non-increasing with respect to $\delta$, Lemma~\ref{le:QK} implies that, for every $\mu \in \cP_T(X)$ and $\ep>0$,
\begin{equation*}
    \begin{alignedat}{3}
\overline{h}^K_\mu(2\ep,2\ep)\,\,\,\,&\leq\,\,\,\,\overline{h}^K_\mu(2\ep,\ep)\,\,\,\,&\leq\,\,\,\,\overline{Q}_\mu(T,d,LP,\ep) \,\,\,\,&\leq\,\,\,\, \overline{h}^K_\mu(\ep,\ep) %\\\underline{h}^K_\mu(2\ep,2\ep)\,\,\,\,&\leq\,\,\,\,\underline{h}^K_\mu(2\ep,\ep)\,\,\,\,&\leq\,\,\,\,\underline{Q}_\mu(T,d,LP,\ep) \,\,\,\,&\leq\,\,\,\, \underline{h}^K_\mu(\ep,\ep) 
    \end{alignedat}.
\end{equation*}

\smallskip

\noindent Combining this information with Lemma~\ref{K=S}, we obtain, for every $\mu\in\cP_T(X)$,
\begin{eqnarray*}
\overline{\mathrm{mdim}}_Q(X,d,T,LP,\mu) &=& 
\limsup_{\ep\,\to\,0^+}\,\frac{\overline{h}^K_\mu(\ep,\ep)}{\log\,(1/\ep)} 
\,\,\,= \,\,\, 
\limsup_{\ep\,\to\,0^+}\,\frac{\overline{h}^S_\mu(\ep,\ep)}{\log\,(1/\ep)} 
%\\ 
%\underline{\mathrm{mdim}}_Q(X,d,T,LP,\mu) &=&
%\liminf_{\ep\,\to\,0^+}\,\frac{\overline{h}^K_\mu(\ep,\ep)}{\log\,(1/\ep)}\,\,\,
%\,\,\,= \,\,\, \liminf_{\ep\,\to\,0^+}\,\frac{\overline{h}^S_\mu(\ep,\ep)}{\log\,(1/\ep)}.
\end{eqnarray*}

\medskip

\begin{remark}\label{rem:smmd}
By definition, for a fixed $\delta>0$, the $(\ep,\delta)$-Katok entropy satisfies $$\overline{h}_\mu^K(\ep,\delta)\,\,\geq\,\,\overline{h}_\nu^K(\ep,\delta/t)\,\,\geq\,\,\overline{h}_\nu^K(\ep/t,\delta/t)$$ 
whenever $\mu \geq t\,\nu$ for some $t\in(0,1)$. In particular, $\overline{h}_\mu^K(\ep,\ep)$ has the s.m.m.d.~property, and the map $\overline{h}_\mu^K(\ep)\,=\,\lim_{\delta\,\to\,0^+}\,\overline{h}_\mu^K(\ep,\delta)$ satisfies the following stronger version of that property:
$$\overline{h}_\mu^K(\ep)\,\,\geq\,\,\overline{h}_\nu^K(\ep)\,\,\geq\,\,\overline{h}_\nu^K(\ep/t).$$
The corresponding statements for the Shapira entropies also hold.
\end{remark}

%%%%%%%%%%%%%%%%%%%%%%%%%%%%%%%%%%%%%%%%%%%%%%%%%%%%%%%%%%%%
\subsection{Example}\label{se:exampleB}

Consider the interval $[0,1]$ with the Euclidean distance, the space $[0,1]^\ZZ$ endowed with the following distance, which is compatible with the product topology,
$$d\big((x_n)_{n\,\in\,\ZZ},(y_n)_{n\,\in\,\ZZ}\big)\,\,=\,\,\sum_{n\,\in\,\ZZ}\,\frac{|x_n\,-\,y_n|}{2^{|n|}}$$ and 
%It is known (cf. \cite{LW2000}) that 
the shift map $\si\big((x_n)_{n\,\in\,\ZZ}\big)=(x_{n+1})_{n\,\in\,\ZZ}.$
%satisfies $$\mdim_M([0,1]^\ZZ,d,\si)\,\,=\,\,1.$$
We refer to a closed shift-invariant subset $Y\subset [0,1]^\ZZ$ as a \emph{subshift}.

\medskip

Fix a sequence $a\,=\,(a_k)_{k\,\geq\,0}$ of real numbers and a sequence $b\,=\,(b_k)_{k\,\geq\,1}$ of positive integers that meet the following conditions:
\begin{equation*}\label{conditionshorseshoe}
    \begin{split}
        & \text{(C1) }\hspace{.3cm} 0=a_0<a_1<...<a_k\to1.\hspace{8cm} \\ & \text{(C2)}\hspace{.3cm} (a_{k}-a_{k-1})_k \text{ decreases to zero.}\\ & \text{(C3) }\hspace{.3cm} (b_k)_k \text{ is strictly increasing.}
    \end{split}
\end{equation*} 
To simplify the notation, we write 
\begin{equation}\label{ep_k}
    \ep_k \,\,=\,\, \frac{|a_k\,-\,a_{k-1}|}{b_k}
\end{equation}
and, for each $k\in\NN,$
%$$a_k^{(j)}\,\,=\,\,a_{k-1}\,\,+\,\,j\,\ep_k\qquad\quad\forall\, j\,\in\,\{0,...,b_{k}-1\}.$$
$$J_k\,\,=\,\,\Big\{a_{k-1}\,+\,j\,\ep_k\,\,\,\colon\,\, j\,\in\,\big\{0,...,b_{k}-1\big\}\Big\}.$$ Then, $$J_i\,\cap\,J_j\,=\,\emptyset\,\,\,\,\,\,\forall\, i\ne j\qquad \text{and}\qquad |x-y|\,\geq\,\ep_k\,\,\,\,\,\,\forall\,x,y\,\in\,J_k.$$ Denote $X_k\,=\,\big(J_k\big)^\ZZ$ and the subshift
\begin{equation*}
X\,\,=\,\,\bigcup_{k=1}^{+\infty}\,X_k\,\,\cup\,\,\big\{1^\ZZ\big\}\,\,\subset\,\,[0,1]^\ZZ.
\end{equation*}

By construction, every ergodic measure of the system $(X,d,\si)$ is either the Dirac mass $\delta_{1^\ZZ}$ or gives full mass to $X_k$, for some $k\in\NN$. Thus, by the ergodic decomposition theorem, we may write any invariant probability measure $\mu\,\in\,\cP_\si(X)$ as 
\begin{equation}\label{MeasureDecompositionCaixinhas}
\mu\,\,=\,\,\sum_{k\,\in\,\NN}\, t_k\,\mu_k\,\,+\,\,t_0\,\delta_{\{1^\ZZ\}},
\end{equation}
where $t_0\,=\,\mu(\{1^\ZZ\})$ and 
$$\mu_k\,\in\,\cP_\si(X),\qquad \mu_k(X_k)\,=\,1,\qquad t_k\,=\, \mu(X_k)\qquad\quad\forall\,k\in\NN.$$  

\noindent We recall that, given $k\in\NN$, the topological entropy of the shift map restricted to $X_k$ is $\log b_k$. So, the topological entropy of $(X,\sigma)$ is infinite and there exists an invariant probability measure with maximal entropy, but it is not ergodic.
\smallskip

In what follows, we assume that the parameters are chosen so that 
$$\mathrm{mdim}_M(X,d,\si)\,=\,1 \quad \quad \text{and} \quad \quad 
\lim_{k\,\to\,+\infty} \,\,\frac{\log \ep_k}{\log \ep_{k+1}}\,\,=\,\,1.$$
This is the case, for example, of $a_k\,=\,\sum_{n=1}^k 6/\pi n^2$ and $b_k\,=\,3^k$.

\subsection{Brin-Katok and Rényi entropies do not satisfy the variational principle \eqref{eq:vpgeral}}\label{BKcounterex}

We start by relating the behavior of convex entropy-like functions and the fixed point $1^\ZZ$.

\begin{proposition}\label{prop:conv}
Consider the system $(X,d,\si)$ and a map 
$$F\colon (\mu, \vep) \,\in \, \cP_\sigma(X) \times (0,+\infty) \mapsto F(\mu,\ep).$$
Assume that:
\begin{itemize}
\item[$(i)$] The map 
$$\mu \,\in \, \cP_\sigma(X) \,\longmapsto\,\, \limsup_{\ep\,\to\,0^+}\,\frac{F(\mu,\ep)}{\log(1/\ep)}$$ 
is convex.
\smallskip

\item[$(ii)$]  For every $\vep > 0$ and $\mu \in \cP_\sigma(X)$, 
$$F(\mu,\ep)\,\,\leq\,\,h_\ep(\supp(\mu),\,d|_{\supp(\mu)},\,\si|_{\supp(\mu)}).$$
\end{itemize}

\medskip

\noindent Then 
\begin{equation}\label{eq:convex}
\limsup_{\ep\,\to\,0^+}\,\frac{F(\mu,\ep)}{\log(1/\ep)}\,\,\leq\,\,\mu(\{1^\ZZ\})\,\,\limsup_{\ep\,\to\,0^+}\,\frac{F(\delta_{\{1^\ZZ\}},\ep)}{\log(1/\ep)}\qquad \forall\, \mu\in\cP_\si(X).    
\end{equation}
\end{proposition}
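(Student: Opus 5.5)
The plan is to write $G(\mu)=\limsup_{\ep\to 0^+} F(\mu,\ep)/\log(1/\ep)$ and to combine three facts: the decomposition \eqref{MeasureDecompositionCaixinhas}, hypothesis $(ii)$ to control $G$ on each piece, and hypothesis $(i)$ to reassemble the pieces.

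\emph{Step 1: finite-entropy pieces.} Fix $k\in\NN$ and any $\mu_k\in\cP_\si(X)$ with $\mu_k(X_k)=1$. Since $X_k=(J_k)^\ZZ$ is closed, $\supp(\mu_k)\subset X_k$. Any $(n,\ep)$-separated subset of $X_k$ has at most $b_k^n$ points, for every $\ep>0$: two points whose coordinates $0,\dots,n-1$ coincide satisfy $d_n\le \ep$ only if $\ep$ is large enough, and in any case separated sets inject into blocks of length $n$ up to a bounded window depending on $\ep$. This gives $h_\ep(\supp(\mu_k),d,\si)\le \log b_k$ for all $\ep>0$. By $(ii)$ we get $F(\mu_k,\ep)\le \log b_k$, hence $G(\mu_k)=0$.

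\emph{Step 2: a uniform bound on the tail.} For any $\nu\in\cP_\si(X)$, hypothesis $(ii)$ and the monotonicity of $h_\ep$ under passing to subsystems give
$$G(\nu)\,\le\,\overline{\mdim}_M(\supp\nu,d,\si)\,\le\,\overline{\mdim}_M(X,d,\si)\,=\,1 .$$

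\emph{Step 3: reassembly.} Write $\mu=\sum_k t_k\mu_k+t_0\,\delta_{\{1^\ZZ\}}$ as in \eqref{MeasureDecompositionCaixinhas}. Fix $K$ and set $r_K=\sum_{k>K}t_k$. When $r_K>0$, let $\nu_K=r_K^{-1}\sum_{k>K}t_k\mu_k\in\cP_\si(X)$. Then
$$\mu\,=\,\sum_{k\le K}t_k\mu_k\,+\,t_0\,\delta_{\{1^\ZZ\}}\,+\,r_K\,\nu_K$$
is a finite convex combination. Applying the convexity in $(i)$, then Steps 1 and 2, yields
$$G(\mu)\,\le\, t_0\,G(\delta_{\{1^\ZZ\}})\,+\,r_K .$$
Since $r_K\to 0$ as $K\to\infty$, we obtain \eqref{eq:convex}.

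The only step needing care is the tail. Convexity is only assumed for finite combinations, so it cannot be applied directly to the infinite sum. This is why the residual mass $r_K$ is lumped into the single measure $\nu_K$ and controlled by the a priori bound $\overline{\mdim}_M(X,d,\si)=1<\infty$. The finiteness of the metric mean dimension of $X$ is exactly what makes this truncation argument work.
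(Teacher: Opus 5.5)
Your proof is correct and follows the paper's argument: truncate the decomposition of $\mu$ at $K$, use $(ii)$ with $h_\ep(X_k)\le\log b_k$ so that the first $K$ pieces contribute nothing, bound the tail by its mass times $\overline{\mdim}_M(X,d,\si)=1$, and let $K\to+\infty$. Lumping the tail into the single measure $\nu_K$, so that only finite convexity is used, is actually more careful than the paper, which applies $(i)$ directly to the countable sum.

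The only slip is in Step 1, which does not affect the conclusion. The bound $b_k^n$ on $(n,\ep)$-separated subsets of $X_k$ is false for small $\ep$, since points agreeing on coordinates $0,\dots,n-1$ can still be more than $\ep$ apart through coordinates $-1$ or $n$. Your windowed count $b_k^{\,n+2m(\ep)}$, or simply $h_\ep\le h_{top}(\si|_{X_k})=\log b_k$, still gives the bound you need.
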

\smallskip

\begin{proof}
Take $\mu \in \cP_\sigma(X)$ and consider its decomposition  \eqref{MeasureDecompositionCaixinhas}, say
$$\mu\,\,=\,\,\sum_{k\,\in\,\NN}\, t_k\,\mu_k\,\,+\,\,t_0\,\delta_{\{1^\ZZ\}}.$$
Fix $K\in\NN$. Then
\begin{align*}
\limsup_{\ep\,\to\,0^+}\,\frac{F(\mu,\ep)}{\log(1/\ep)}\,\,&\leq\,\,\sum_{k\,=\,1}^K\, t_k\,\limsup_{\ep\,\to\,0^+}\,\frac{F(\mu_k,\ep)}{\log(1/\ep)}\,\,+\,\,\sum_{k\,>\,K}\, t_k\,\limsup_{\ep\,\to\,0^+}\,\frac{F(\mu_k,\ep)}{\log(1/\ep)} \\
&\qquad\,+\,\,t_0\,\limsup_{\ep\,\to\,0^+}\,\frac{F(\delta_{\{1^\ZZ\}},\ep)}{\log(1/\ep)}
%\\&\leq\,\,\sum_{k\,=\,1}^K\, t_k\,\limsup_{\ep\,\to\,0^+}\,\frac{h_{\ep}(X_k,d|_{X_k},\si|_{X_k})}{\log(1/\ep)}\,\,+\,\,\sum_{k\,>\,K}\, t_k\,\limsup_{\ep\,\to\,0^+}\,\frac{h_\ep(X,d,\si)}{\log(1/\ep)}\,\,+\,\,t_0\,\limsup_{\ep\,\to\,0^+}\,\frac{F(\delta_{\{1^\ZZ\}},\ep)}{\log(1/\ep)}
\\&\leq\,\,\sum_{k\,=\,1}^K\, t_k\,\limsup_{\ep\,\to\,0^+}\,\frac{\log b_K}{\log(1/\ep)}\,\,+\,\,\sum_{k\,>\,K}\, t_k\,\limsup_{\ep\,\to\,0^+}\,\frac{h_\ep(X,d,\si)}{\log(1/\ep)} \\
&\qquad\,+\,\,\,t_0\,\limsup_{\ep\,\to\,0^+}\,\frac{F(\delta_{\{1^\ZZ\}},\ep)}{\log(1/\ep)}\\
&=\,\,\mu\big( \bigcup_ {k\,>\,K} J_k\big)\,\mathrm{mdim}_M(X,d,\si)\,\,+\,\,t_0\,\limsup_{\ep\,\to\,0^+}\,\frac{F(\delta_{\{1^\ZZ\}},\ep)}{\log(1/\ep)}.
\end{align*}
Making $K\to+\infty$, we get \eqref{eq:convex} since
$\lim_{K \, \to \, +\infty}\,\mu\big( \bigcup_ {k\,>\,K} J_k\big) \, = \, 0.$
\end{proof}

Taking into account that
$$\limsup_{\ep\,\to\,0^+}\,\frac{\overline{h}_{\delta_{\{1^\ZZ\}}}^{BK}(\ep)}{\log(1/\ep)} \, = \, 0$$
\smallskip

\noindent and using the properties of $\overline{h}_\mu^{BK}$ described in Lemmas~\ref{BK_leq_ep-entropy} and \ref{BK_convex}, Proposition~\ref{prop:conv} yields:

\begin{corollary}\label{cor:BK}
For the system $(X,d,\si)$ we have $$\limsup_{\ep\,\to\,0^+}\,\frac{\overline{h}_\mu^{BK}(\ep)}{\log(1/\ep)}\,\,=\,\,0\qquad \forall\, \mu\in\cP_\si(X).$$
Therefore, as $\mathrm{\overline{mdim}_M}(X,d,\sigma)=1$,
$$\mathrm{\overline{mdim}_M}(X,d,\sigma) \,\, >\, \sup_{\mu\,\in\,\cP_\sigma(X)}\,\,  \limsup_{\vep \, \to \, 0^+} \, \,\frac{\overline{h}_\mu^{BK}(\ep)}{\log \,(1/\vep)}.$$
\end{corollary}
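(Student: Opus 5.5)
The plan is to apply Proposition~\ref{prop:conv} with $F(\mu,\ep)=\overline{h}_\mu^{BK}(\ep)$, and then evaluate the right-hand side of \eqref{eq:convex} at the fixed point $1^\ZZ$.

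First I check hypothesis $(i)$ of Proposition~\ref{prop:conv}, namely convexity of $\mu\mapsto\limsup_{\ep\to0^+}\overline{h}_\mu^{BK}(\ep)/\log(1/\ep)$. Lemma~\ref{BK_convex} gives, for each fixed $\ep$, the affine-type inequality
\[
\overline{h}^{BK}_\mu(\ep)\;\le\;\sum_k t_k\,\overline{h}^{BK}_{\mu_k}(\ep)
\]
for countable combinations $\mu=\sum_k t_k\mu_k$, including the term $t_0\delta_{\{1^\ZZ\}}$. Dividing by $\log(1/\ep)$ and passing to the $\limsup$ requires exchanging $\limsup$ with a countable sum. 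This is exactly how the inequality is used in the proof of Proposition~\ref{prop:conv}, so that proof does not really need convexity of the limit map. It can be run directly from the $\ep$-level inequality:
\begin{itemize}
\item for the finitely many terms $k\le K$, the $\limsup$ of a finite sum is bounded by the sum of the $\limsup$s;
\item for each $k\le K$, hypothesis $(ii)$ bounds $\overline{h}^{BK}_{\mu_k}(\ep)$ by the entropy of $X_k$, which is at most $\log b_K$, so these contributions divided by $\log(1/\ep)$ vanish;
\item the tail $k>K$ is controlled uniformly via Lemma~\ref{BK_leq_ep-entropy}, since $\overline{h}^{BK}_{\mu_k}(\ep)\le h_{\ep/2}(X,d,\si)$, and this bound does not depend on $k$.
\end{itemize}
So I will re-run that argument with the $\ep$-wise inequality, using $h_{\ep/2}$ in place of $h_\ep$. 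Replacing $h_\ep$ by $h_{\ep/2}$ does not change the dimension, because $\log(2/\ep)/\log(1/\ep)\to1$.

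Next I verify hypothesis $(ii)$ in the slightly weakened form $F(\mu,\ep)\le h_{\ep/2}(\supp\mu)$. Lemma~\ref{BK_leq_ep-entropy} applied to the subsystem $\supp(\mu)$ gives this, because $\mu$-balls only see $\supp\mu$: for $x\in\supp\mu$, the quantity $\mu(B_n(x,\ep))$ computed in $X$ equals the one computed in the restricted system. For $\mu_k$ supported in $X_k$, the topological entropy of $X_k$ is $\log b_k$, so the bound is $\log b_k$.

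The resulting estimate is
\[
\limsup_{\ep\to0^+}\frac{\overline{h}^{BK}_\mu(\ep)}{\log(1/\ep)}\;\le\;\mu\Big(\bigcup_{k>K}X_k\Big)\cdot 1\;+\;t_0\cdot\limsup_{\ep\to0^+}\frac{\overline{h}^{BK}_{\delta_{1^\ZZ}}(\ep)}{\log(1/\ep)},
\]
and letting $K\to\infty$ kills the first term. For the Dirac mass, $\delta_{1^\ZZ}(B_n(1^\ZZ,\ep))=1$ for all $n$, so $\overline{h}^{BK}_{\delta_{1^\ZZ}}(\ep,1^\ZZ)=0$ and hence $\overline{h}^{BK}_{\delta_{1^\ZZ}}(\ep)=0$ for every $\ep$. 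Therefore the $\limsup$ is $0$ for every $\mu\in\cP_\si(X)$, and it is $\ge0$ trivially.

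Since $\mathrm{\overline{mdim}_M}(X,d,\si)=1$ by the choice of parameters, the strict inequality in Corollary~\ref{cor:BK} follows. The main obstacle I expect is justifying the interchange of $\limsup_\ep$ with the infinite sum over ergodic components. This is why I would control the tail through the uniform bound of Lemma~\ref{BK_leq_ep-entropy} on all of $X$ rather than term by term, and I also need the tail weights $\sum_{k>K}t_k$ to tend to zero.
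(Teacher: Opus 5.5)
Your proposal is correct and follows the paper's route: feed Lemmas~\ref{BK_convex} and \ref{BK_leq_ep-entropy} into the argument of Proposition~\ref{prop:conv}, then use $\overline{h}^{BK}_{\delta_{\{1^\ZZ\}}}(\ep)=0$. Your version is in fact slightly more careful than the paper's one-line appeal to that proposition. You work from the $\ep$-level convexity inequality, using the uniform tail bound $h_{\ep/2}(X,d,\si)$ to justify exchanging $\limsup_\ep$ with the countable sum, and you account for the $\ep/2$ that Lemma~\ref{BK_leq_ep-entropy} produces in place of the $\ep$ in hypothesis $(ii)$.
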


\smallskip

We have a similar statement for the Rényi entropy.

\begin{corollary}
For every $\mu\in\cP_\si(X)$, $$\limsup_{\ep\,\to\,0^+}\,\frac{\inf_{\mathrm{diam}(P)\,\le\, \varepsilon} \,\,h_\mu(P)}{\log(1/\ep)}\,\,=\,\,0.$$
Therefore, as $\mathrm{\overline{mdim}_M}(X,d,\sigma)=1$,
$$\mathrm{\overline{mdim}_M}(X,d,\sigma) \,\, >\, \sup_{\mu\,\in\,\cP_\sigma(X)}\,\,  \limsup_{\vep \, \to \, 0^+} \, \,\frac{\inf_{\mathrm{diam}(P)\,\le\, \varepsilon}\,\, h_\mu(P)}{\log \,(1/\vep)}.$$
\end{corollary}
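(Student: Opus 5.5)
We plan to run the same argument as in Proposition~\ref{prop:conv}, now with $F(\mu,\ep)=\inf_{\mathrm{diam}(P)\le\ep} h_\mu(P)$. That requires three ingredients: the value of $F$ at the fixed point, hypothesis $(ii)$, and convexity $(i)$.

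The first two are quick.
\begin{itemize}
\item At the fixed point we have $h_{\delta_{\{1^\ZZ\}}}(P)=0$ for every partition, so $F(\delta_{\{1^\ZZ\}},\ep)=0$.
\item Hypothesis $(ii)$: for any $\mu$ and any partition $P$ of $X$ with diameter at most $\ep$, restrict $P$ to $\supp(\mu)$. Standard arguments (Misiurewicz-type bounds, cf. \cite[Section~3]{GS}) give $h_\mu(P)\le \limsup_n \frac1n \log \#\{A\in P^n\colon A\cap\supp(\mu)\neq\emptyset\}$. Each element of $P^n$ has $d_n$-diameter at most $\ep$. Choosing one point in each element meeting the support yields an $(n,\ep)$-separated-type count on $\supp(\mu)$, after the usual constant adjustment of scale. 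Since this bound holds for every admissible $P$, it holds for the infimum. We only need $(ii)$ up to a change $\ep\mapsto c\ep$, which does not affect the $\limsup$ after division by $\log(1/\ep)$.
\end{itemize}

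Convexity $(i)$ is the main obstacle, because the infimum over partitions is not obviously convex in $\mu$ at fixed scale. Rather than establish $(i)$ for $F$ itself, we will bypass it with a direct version of the proposition's computation.

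Write $\mu=\sum_k t_k\mu_k+t_0\delta_{\{1^\ZZ\}}$ and fix $K$. Fix one partition $P$ of diameter at most $\ep$ and use the identity $h_\mu(P)=\sum_k t_k h_{\mu_k}(P)+t_0 h_{\delta}(P)$. This affinity of $h_\mu(P)$ in $\mu$ for a fixed $P$ is the standard result for countable invariant combinations; it follows from the ergodic decomposition, since the $X_k$ are disjoint invariant sets. We then bound each term:
\begin{itemize}
\item For $k\le K$, $h_{\mu_k}(P)\le h_{top}(\sigma|_{X_k})=\log b_k\le\log b_K$.
\item For $k>K$, $h_{\mu_k}(P)\le h_{c\ep}(X,d,\si)$ by the bound above.
\item The Dirac term vanishes.
\end{itemize}
Since $F(\mu,\ep)\le h_\mu(P)$, this gives
$$F(\mu,\ep)\,\le\,\log b_K+\mu\Big(\bigcup_{k>K}X_k\Big)\,h_{c\ep}(X,d,\si).$$
Divide by $\log(1/\ep)$ and take $\limsup_{\ep\to0^+}$; the first term disappears. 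Because $\mathrm{\overline{mdim}_M}(X,d,\si)=1$, we obtain $\limsup_{\ep\to0^+} F(\mu,\ep)/\log(1/\ep)\le\mu\big(\bigcup_{k>K}X_k\big)$. Letting $K\to+\infty$ makes the right-hand side tend to $0$, since the sets $\bigcup_{k>K}X_k$ decrease to the empty set.

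Nonnegativity of $F$ then gives equality with $0$, and the strict inequality against $\mathrm{\overline{mdim}_M}(X,d,\si)=1$ follows at once. The remaining care is in justifying the entropy bound $h_{\mu_k}(P)\le h_{c\ep}$, i.e. counting the elements of $P^n$ charged by $\mu_k$. We will handle it through the local variational principle for a fixed partition: $h_\nu(P)\le\log\#$ of essential cells grows at most like $\mathcal N(X,n,\ep)$, since each cell of $d_n$-diameter at most $\ep$ lies in a $d_n$-ball of radius $\ep$, and a ball of radius $\ep/2$ cover controls how many cells meet it.
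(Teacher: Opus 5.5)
\paragraph{The gap.} The problem is the tail estimate $h_{\mu_k}(P)\le h_{c\ep}(X,d,\si)$ for $k>K$. You apply it to a partition $P$ with $\diam(P)\le\ep$ that is otherwise arbitrary, and you justify $(ii)$ by saying ``this bound holds for every admissible $P$''. No such bound holds for every admissible $P$, and the counting runs in the wrong direction. Small $d_n$-diameter of the cells of $P^n$ forces their number to be \emph{at least} a covering number, not at most. One point per cell is not a separated set, since cells can be tiny and crowded. A single $d_n$-ball of radius $\ep/2$ can meet arbitrarily many cells.

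This system already gives a counterexample. Fix $\ep$, take $k$ with $\ep_k<\ep$, and take any partition $P$ with $\diam(P)<\ep_k$. If $x,y\in X_k$ lie in the same cell, then $|x_0-y_0|\le d(x,y)<\ep_k$, so $x_0=y_0$. Thus $P$ refines the zero-coordinate partition of $X_k$. If $\mu_k$ is the uniform Bernoulli measure on $X_k$, this gives $h_{\mu_k}(P)=\log b_k$, which exceeds the finite number $h_{c\ep}(X,d,\si)$ once $k$ is large. Hypothesis $(ii)$, as established in \cite[Section~3]{GS}, is a statement about the infimum only. For each measure, \emph{some} partition of diameter at most $\ep$, depending on that measure, has entropy at most of order $h_{c\ep}$.

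\paragraph{How to repair it.} Your fixed-scale computation survives if you choose $P$ after the measure and group the tail. Let $s_K=\sum_{k>K}t_k$; if $s_K=0$ there is nothing to do. Put $\nu_K=s_K^{-1}\sum_{k>K}t_k\mu_k\in\cP_\si(X)$, and use $(ii)$ for $\nu_K$ to pick $P$ with $\diam(P)\le\ep$ and $h_{\nu_K}(P)\le h_{c\ep}(X,d,\si)+1$. Your head bound $h_{\mu_k}(P)\le h_{\mu_k}(\si)\le\log b_k$ is valid for every $P$. Applying finite affinity of $\nu\mapsto h_\nu(P)$ to $\mu=\sum_{k\le K}t_k\mu_k+s_K\nu_K+t_0\,\delta_{\{1^\ZZ\}}$ then gives
\[ \inf_{\diam(P')\le\ep}h_\mu(P')\,\le\, h_\mu(P)\,\le\,\log b_K+s_K\big(h_{c\ep}(X,d,\si)+1\big). \]
Your limits in $\ep$ and $K$ then finish the proof, and countable affinity is no longer needed.

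Once repaired, your route genuinely differs from the paper's. The paper obtains convexity of the dimension-level map $\mu\mapsto\limsup_{\ep\to0^+}F(\mu,\ep)/\log(1/\ep)$ from the identity \eqref{eq:Pm} of \cite{GS,Yang}, whose partitions $P_m$ do not depend on $\mu$, and then applies Proposition~\ref{prop:conv}. The repaired argument never needs convexity after passing to the limit. It uses only hypothesis $(ii)$ for the single tail measure $\nu_K$ and affinity at a fixed partition, so it bypasses \eqref{eq:Pm} altogether.
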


\begin{proof}
We are left to show that the Rényi entropy satisfies the assumptions of Proposition~\ref{prop:conv}. We start by proving that the map
\begin{equation}\label{eq:map}
\mu\,\in\,\cP_\si(X)\,\longmapsto\,\,\limsup_{\vep \, \to \, 0^+} \, \,\frac{\inf_{\mathrm{diam}(P)\,\le\, \varepsilon} \,\,h_\mu(P)}{\log \,(1/\vep)}
\end{equation}
is convex. The next property was first established for ergodic measures in \cite{GS} and then extended to the invariant probability measures in \cite{Yang}.\\

\begin{claim}\cite{GS, Yang} \emph{For every $\mu\,\in\,\cP_\si\big([0,1]^\ZZ\big)$, 
\begin{equation}\label{eq:Pm}
\limsup_{\vep \, \to \, 0^+} \, \frac{\inf_{\mathrm{diam}(P)\,\le\, \varepsilon}\,\, h_\mu(P)}{\log \,(1/\vep)}\,\,=\,\,\limsup_{m \, \to \, +\infty} \, \frac{ h_\mu(P_m)}{\log \,m}
\end{equation}
where $m\in\NN$,
$$P_m \,=\, \pi_0^{-1}\Big\{\Big[\frac{i}{m},\frac{i+1}{m}\Big)\,\colon\,i=0,...,m-1 \Big\}$$
and $\pi_0 \colon [0,1]^\ZZ \to [0,1]$ is the projection on the $0$-coordinate.} 
\end{claim}
\medskip

In particular, the partitions considered on the limit on the right-hand side of \eqref{eq:Pm} do not depend on the measure. Since, for each $m \in \NN$, the map $\mu\mapsto h_\mu(P_m)$ is affine (cf. \cite[Proof of Theorem 8.1]{Wa}), and the $\limsup$ of affine functions is convex, we conclude that the map \eqref{eq:map} is convex.
\smallskip

The second assumption of Proposition~\ref{prop:conv} was established in \cite[Section~3]{GS}.
%Ver equação (3.4).
\end{proof}

%%%%%%%%%%%%%%%%%%%%%%%%%%%%%%%%%%%%%%%%%%%%%%%%%%%%%

\subsection{$(\ep,\delta)-$Katok/Shapira entropy with fixed $\delta$}\label{constantdelta}

Fix $\delta_0\in (0,1)$. Given $\mu\in\cP_\si(X)$, there exists $K=K(\mu,\delta_0)$ such that $\mu\big( \bigcup_ {k\,>\,K} J_k\big) < \delta_0$. Since $\si$ restricted to $\bigcup_ {k\,>\,K} J_k\cup\{1^\ZZ\}$ has a finite topological entropy, one has
$$\limsup_{\ep\,\to\,0^+}\,  \frac{\overline{h}^K_{\mu}(\ep,\delta_0)}{\log\,(1/\ep)}\,\,=\,\,0.$$
Therefore, since this equality is valid for every $\mu\in\cP_\si(X)$, we conclude that 
$$\mathrm{\overline{mdim}_M}(X,d,T) \,\,> \,\, \sup_{\mu\,\in\,\cP_T(X)}\,\limsup_{\ep\,\to\,0^+}\, \frac{\overline{h}^K_{\mu}(\ep,\delta_0)}{\log\,(1/\ep)} .$$ By Lemma~\ref{K=S}, the above inequality also holds for the $(\ep,\delta_0)$-Shapira entropy.

%%%%%%%%%%%%%%%%%%%%%%%%%%%%%%%%%%%%%%%%%%%%%%%%%%%%%%%%%%%%%%%%%%

\subsection*{Acknowledgments}
Gustavo Pessil is grateful to Prof. Yonatan Gutman for his mentoring at IMPAN. The authors thank Adam \'Spiewak, Chunlin Liu and Rui Yang for insightful comments on this work. Maria Carvalho was partially supported by CMUP, member of LASI, which is financed by national funds through FCT – Fundação para a Ciência e a Tecnologia, I.P., under the project UID/00144/2025, https://doi.org/10.54499/UID/00144/2025.

%%%%%%%%%%%%%%%%%%%%%%%%%%%%%%%%%%%%%%%%%%%%%%%%%%%%%%%%%%%%%%%%%%


\begin{thebibliography}{99}

%\bibitem{AR}
%J. Acevedo and F. Rodrigues.
%\newblock Mean dimension and metric mean dimension for non-autonomous dynamical systems.
%\newblock  J. Dyn. Control Syst. 28 (2022), 697--723. 

%\bibitem{Bis2013}
%A. Bi\'s.
%\newblock An analogue of the variational principle for group and pseudogroup actions.
%\newblock Ann. Inst. Fourier 63:3 (2013) 839--863.

%\bibitem{BCP}
%A. Baraviera, M. Carvalho and G. Pessil.
%\newblock \emph{Metric mean dimension, H\"older regularity and Assouad spectrum.}
%\newblock J. Fractal Geom. (2025), published online first.

%\bibitem{BDGS}
%A. Bi\'s, D. Dikranjan, A. Giordano Bruno and L. Stoaynov.
%\newblock Topological entropy, upper Carath\'eodory capacity and fractal dimensions of semigroup actions.
%\newblock Colloq. Math. 163:1 (2021) 131--151.


\bibitem{BB}
P. Berger and J. Bochi.
\newblock \emph{On emergence and complexity of ergodic decompositions.}
\newblock Adv. Math. 390 (2021), 107904.

%\bibitem{BCMV}
%A. Bi\'s, M. Carvalho, M. Mendes and P. Varandas.
%\newblock A convex analysis approach to entropy functions, variational principles and equilibrium states.
%\newblock Commun. Math. Phys 394 (2022), 215--256.

%\bibitem{BCMV-2}
%A. Bi\'s, M. Carvalho, M. Mendes and P. Varandas.
%\newblock Entropy functions for semigroup actions.
%\newblock Preprint, 2022.

%\bibitem{Bil}
%P. Billingsley.
%\newblock  \emph{Convergence of Probability Measures.}
%\newblock Wiley Series in Probability and Statistics, John Wiley and Sons, New York, 2nd edition, 1999.

%\bibitem{BrinK}
%M. Brin and A. Katok.
%\newblock \emph{On local entropy.}
%\newblock Lecture Notes in Math. 1007, Springer Berlin, 1983, pp. 30--38.


%\bibitem{BrinS}
%M. Brin and G. Stuck.
%\newblock \emph{Introduction to Dynamical Systems.}
%\newblock Cambridge University Press, 2002.

%\bibitem{Bufetov}
%A. Bufetov.
%\newblock Topological entropy of free semigroup actions and skew-product transformations.
%\newblock J. Dyn. Control Sys. 5:1 (1999) 137--142.

%\bibitem{Bre}
%J. Br\'emont.
%\newblock Entropy and maximizing measures of generic continuous functions.
%\newblock C. R. Math. Acad. Sci. S\'er. I 346 (2008), 199--201.

%\bibitem{CPV}
%M. Carvalho, G. Pessil and P. Varandas.
%\newblock \emph{A convex analysis approach to the metric mean dimension: limits of scaled pressures and variational principles.}
%\newblock Adv. Math. 436 (2024) 109407.

%\bibitem{CRV}
%M. Carvalho, F. B. Rodrigues and P. Varandas.
%\newblock A variational principle for free semigroup actions.
%\newblock Adv. Math. 334 (2018), 450--487.

%\bibitem{CRV4}
%M. Carvalho, F. Rodrigues and P. Varandas.
%\newblock A variational formula for the metric mean dimension of free semigroup actions.
%\newblock Ergodic Theory Dynam. Systems 42 (2021), 65--85.

%\bibitem{CCL}
%H. Chen, D. Cheng and Z. Li.
%\newblock Upper metric mean dimensions with potential.
%\newblock Results Math. 77 (2022), 54. 


\bibitem{EYZ2025}
E. Chen, R. Yang and X. Zhou.
\newblock \emph{Measure-theoretic metric mean dimension.}
\newblock Studia Math. 280(1) (2025), 1--25.

%\bibitem{CL}
%D. Cheng and Z. Li.
%\newblock Scaled pressure of dynamical systems.
%\newblock J. Differential Equations 342 (2023), 441--471.

%\bibitem{Chung}
%N.-P. Chung.
%\newblock Topological pressure and the variational principle for actions of sofic groups.
%\newblock Ergodic Theory Dyn. Syst. 33:5 (2012) 1363--1390.
%
%\bibitem{GLW}
%E. Ghys, R. Langevin and P. Walczak.
%\newblock Entropie geometrique des feuilletages.
%\newblock Acta Math. 16 (1988) 105--142.

%\bibitem{Con}
%G. Contreras.
%\newblock Ground states are generically a periodic orbit.
%\newblock Invent. Math. 205:2 (2016), 383--412.

%\bibitem{FH16}
%D.-J. Feng and W. Huang.
%\newblock Variational principle for weighted topological pressure.
%\newblock J. Math. Pures Appl. 106 (2016), 411--452.

\bibitem{GSu}
A. L. Gibbs and F. E. Su.
\newblock \emph{On choosing and bounding probability metrics.}
\newblock Int. Stat. Rev. 70(3) (2002), 419--435.


\bibitem{GL}
S. Graf and H. Luschgy.
\newblock \emph{Foundations of Quantization for Probability Distributions.}
\newblock Lecture Notes in Mathematics 1730, Springer-Verlag, Berlin, 2000.

%\bibitem{Gr1999}
%M. Gromov.
%\newblock \emph{Topological invariants of dynamical systems and spaces of holomorphic maps: I.}
%\newblock Math. Phys. Anal. Geom. 2 (1999), 323--415.

\bibitem{GS}
Y. Gutman and A. \'Spiewak.
\newblock \emph{Around the variational principle for metric mean dimension.}
\newblock Studia Math. 261(3) (2021), 345--360.

%\bibitem{Jenk}
%O. Jenkinson.
%\newblock Survey: Ergodic optimization in dynamical systems.
%\newblock Ergodic Theory Dynam. Systems 39:10 (2019), 2593--2618. 

%\bibitem{KD94}
%T. Kawabata and A. Dembo.
%\newblock \emph{The rate-distortion dimension of sets and measures.}
%\newblock IEEE Trans. Inf. Theory 40(5) (1994), 1564--1572.

%\bibitem{Katok}
%A. Katok.
%\newblock \emph{Lyapunov exponents, entropy and periodic orbits for diffeomorphisms.}
%\newblock Publ. Math. Inst. Hautes  \'Etudes Sci. 51 (1980), 137-%-173.

%\bibitem{Kolmogorov-Tikhomirov}
%A. N. Kolmogorov and V. M. Tikhomirov.
%\newblock \emph{$\epsilon$-Entropy and $\epsilon$-capacity of sets in %functional spaces.}
%\newblock Amer. Math. Soc. Transl. 2:17 (1961) 277--364.

\bibitem{LR24}
G. Lacerda and S. Romaña.
\newblock \emph{Mean topological dimension.}
\newblock IEEE Trans. Inform. Theory 70(11) (2024), 7664--7672.


%\bibitem{Lind1999}
%E.~Lindenstrauss.
%\newblock Mean dimension, small entropy factors and an embedding theorem.
%\newblock Publ. Math. IHES 89:1 (1999) 227--262.

\bibitem{LW2000}
E. Lindenstrauss and B. Weiss.
\newblock \emph{Mean topological dimension.}
\newblock Israel J. Math. 115 (2000), 1--24.

\bibitem{LT18}
E. Lindenstrauss and M. Tsukamoto.
\newblock \emph{From rate distortion theory to metric mean dimension: variational principle.}
\newblock IEEE Trans. Inform. Theory 64 (2018), 3590--3609.

\bibitem{LT19}
E. Lindenstrauss and M. Tsukamoto.
\newblock \emph{Double variational principle for mean dimension.}
\newblock Geom. Funct. Anal. 29 (2019), 1048--1109.

%\bibitem{MM}
%M. Misiurewicz.
%\newblock A short proof of the variational principle for a $\mathbb Z^n_+$-action on a compact space.
%\newblock Bull. Acad. Polon. Sci. S\'er. Sci. Math. Astronom. Phys. 24:12 (1976) 1069--1075.

%\bibitem{Morr}
%I. Morris.
%\newblock Ergodic optimization for generic continuous functions.
%\newblock Discrete Contin. Dyn. Sys. 27 (2010), 383--388.

%\bibitem{OP}
%J. Ollagnier and D. Pinchon.
%\newblock The variational principle.
%\newblock Studia Math. 72 (1982) 151--159.

\bibitem{OU}
J. C. Oxtoby and S. M. Ulam.
\newblock \emph{Measure-preserving homeomorphisms and metrical transitivity.}
\newblock Ann. Math. 42(4) (1941), 874--920.

%\bibitem{Ross}
%S. Ross.
%\newblock \emph{A First Course in Probability.}
%\newblock Pearson Prentice Hall, 2006.

%\bibitem{Ru}
%D. Ruelle.
%\newblock Statistical mechanics on a compact set with $Z^d$ action satisfying expansiveness and specification.
%\newblock Trans. Amer. Math. Soc. 187 (1973) 237--251.

\bibitem{Shapira}
U. Shapira.
\newblock \emph{Measure theoretical entropy of covers.}
\newblock Israel. J. Math. 158(1) (2007), 225--247.

\bibitem{SHI}
R. Shi.
\newblock \emph{On variational principles for metric mean dimension.}
\newblock IEEE Trans. Inform. Theory 68 (2022), 4282--4288.

\bibitem{Shi23}
R. Shi.
\newblock \emph{Finite mean dimension and marker property.}
\newblock Trans. Amer. Math. Soc. 376 (2023), 6123--6139.


%\bibitem{Shinoda}
%M. Shinoda.
%\newblock Uncountably many maximizing measures for a dense subset of continuous functions.
%\newblock Nonlinearity 31 (2018), 2192--2200.

%\bibitem{S}
%K. Sigmund.
%\newblock On dynamical systems with the specification property.
%\newblock  Trans. Amer. Math. Soc. 190 (1974), 285--299.

%\bibitem{Si}
%K. Sigmund.
%\newblock On the connectedness of ergodic systems.
%\newblock Manuscr. Math. 22 (1977), 27--32.

%\bibitem{Tsu2020}
%M. Tsukamoto.
%\newblock Double variational principle for mean dimension with potential.
%\newblock Adv. Math. 361 (2020), 106935.

\bibitem{Tsu2025}
M. Tsukamoto.
\newblock \emph{Rate distortion dimension and ergodic decomposition for $\mathbb{R}^d$-actions.}
\newblock Preprint arXiv:2503.06851, 2025. 

\bibitem{TTY}
M. Tsukamoto, M. Tsutaya and M. Yoshinaga.
\newblock \emph{G-index, topological dynamics and the marker property.}
\newblock Israel J. Math. 251 (2022) 737--764.

\bibitem{Vi}
C. Villani.
\newblock \emph{Topics in Optimal Transportation.}
\newblock Graduate Studies in Mathematics 58, American Mathematical Society, Providence, RI, 2003.

\bibitem{VV}
A. Velozo and R. Velozo.
\newblock\emph{Rate distortion theory, metric mean dimension and measure theoretic entropy.}	
\newblock Preprint, arXiv:1707.05762, 2017.

%\bibitem{XM}
%Q. Xiao and D. Ma.
%\newblock Topological pressure of free semigroup actions for non-compact sets and Bowen's equation.
%\newblock  J. Dynam. Differential Equations (2021), 

\bibitem{Wang}
T. Wang.
\newblock \emph{Variational relations for metric mean dimension and rate distortion dimension.}
\newblock Discrete Contin. Dyn. Syst. 41(10) (2021), 4593--4608.

\bibitem{Wa}
P. Walters.
\newblock \emph{An Introduction to Ergodic Theory.}
\newblock Graduate Texts in Mathematics 79, Springer-verlag, New York, Berlin, Heidelberg, 1982.

\bibitem{Yang}
R. Yang.
\newblock\emph{Mean dimension and rate-distortion function revisited.}	
\newblock Preprint, arXiv:2510.08051, 2025.

%\bibitem{YCZ}
%R. Yang, E. Chen and X. Zhou.
%\newblock On variational principle for upper metric mean dimension %with potential.
%\newblock Preprint 2022, arXiv: 2207.01901.

%\bibitem{YZ}
%X. Ye and G. Zhang.
%\newblock Entropy points and applications.
%\newblock Trans. Amer. Math. Soc. 359:12 (2007), 6167--6186.

\end{thebibliography}
\end{document}